\newtheorem{problem}{Prob.}
\newtheorem{theorem}{Theorem}
\newtheorem{property}{Property}
\newtheorem{lemma}{Lemma}
\journal{Elsevier}
\begin{document}

\begin{frontmatter}

%% Title, authors and addresses

%% use the tnoteref command within \title for footnotes;
%% use the tnotetext command for theassociated footnote;
%% use the fnref command within \author or \address for footnotes;
%% use the fntext command for theassociated footnote;
%% use the corref command within \author for corresponding author footnotes;
%% use the cortext command for theassociated footnote;
%% use the ead command for the email address,
%% and the form \ead[url] for the home page:
%% \title{Title\tnoteref{label1}}
%% \tnotetext[label1]{}
%% \author{Name\corref{cor1}\fnref{label2}}
%% \ead{email address}
%% \ead[url]{home page}
%% \fntext[label2]{}
%% \cortext[cor1]{}
%% \affiliation{organization={},
%%             addressline={},
%%             city={},
%%             postcode={},
%%             state={},
%%             country={}}
%% \fntext[label3]{}

\title{CPAFT: A Consistent Parallel Advancing Front Technique for 
Unstructured Triangular/Tetrahedral Mesh Generation}

%% use optional labels to link authors explicitly to addresses:
%% \author[label1,label2]{}
%% \affiliation[label1]{organization={},
%%             addressline={},
%%             city={},
%%             postcode={},
%%             state={},
%%             country={}}
%%
%% \affiliation[label2]{organization={},
%%             addressline={},
%%             city={},
%%             postcode={},
%%             state={},
%%             country={}}

\author[inst1]{Chengdi Ma}
\ead{mcd2020@stu.pku.edu.cn}
\author[inst2]{Jizu Huang}
\ead{huangjz@lsec.cc.ac.cn}
\author[inst1]{Hao Luo}
\ead{lhsms@pku.edu.cn}
\author[inst1,inst3]{Chao Yang\corref{cor1}}
\ead{chao\_yang@pku.edu.cn}
\cortext[cor1]{Corresponding author}

\affiliation[inst1]{organization={School of Mathematical Sciences},%Department and Organization
            addressline={Peking University}, 
            city={Beijing},
            postcode={100871}, 
            %state={State One},
            country={P.R. China}}

\affiliation[inst2]{organization={Institute of Computational Mathematics and Scientific/Engineering Computing, Academy of
Mathematics and Systems Science},%Department and Organization
            addressline={Chinese Academy of Sciences}, 
            city={Beijing},
            postcode={100190}, 
            %state={State Two},
            country={P.R. China}}

\affiliation[inst3]{organization={PKU-Changsha Institute for Computing and Digital Economy},%Department and Organization
            addressline={Peking University}, 
            city={Hunan},
            postcode={410006}, 
            country={P.R. China}}

\begin{abstract}
%% Text of abstract
Compared with the remarkable progress made in parallel numerical solvers of partial differential equations, the development of algorithms for generating unstructured triangular/tetrahedral meshes has been relatively sluggish. In this paper, we propose a novel, consistent parallel advancing front technique (CPAFT) by combining the advancing front technique, the domain decomposition method based on space-filling curves, the distributed forest-of-overlapping-trees approach, and the consistent parallel maximal independent set algorithm. The newly proposed CPAFT algorithm can mathematically ensure that the generated unstructured triangular/tetrahedral meshes are independent of the number of processors and the implementation of domain decomposition. Several numerical tests are conducted to validate the parallel consistency and outstanding parallel efficiency of the proposed algorithm, which scales effectively up to two thousand processors. This is, as far as we know, the first parallel unstructured triangular/tetrahedral mesh generator with scalability to O(1,000) CPU processors.
\end{abstract}

\iffalse

%%Graphical abstract
\begin{graphicalabstract}
\includegraphics{}
\end{graphicalabstract}

%%Research highlights
\begin{highlights}
\item 
\item 
\item 
\item 
\end{highlights}

\fi

\begin{keyword}
    unstructured triangular/tetrahedral mesh generation \sep advancing front technique \sep domain decomposition \sep maximal independent set \sep parallel computing
    \MSC[2020] 65M50 \sep 65M55 \sep 68W10
\end{keyword}

\iffalse
\begin{keyword}
%% keywords here, in the form: keyword \sep keyword
keyword one \sep keyword two
%% PACS codes here, in the form: \PACS code \sep code
\PACS 0000 \sep 1111
%% MSC codes here, in the form: \MSC code \sep code
%% or \MSC[2008] code \sep code (2000 is the default)
\MSC 0000 \sep 1111
\end{keyword}
\fi

\end{frontmatter}

%% \linenumbers

%% main text
\section{Introduction} 
Unstructured meshes have been playing an indispensable role in the processing of complex geometrical models, and are frequently employed in a large variety of applications including solid mechanics \citep{bailey1995finite, boscheri2022cell, demirdvzic1995numerical}, fluid dynamics \citep{bermudez1998upwind, di2009two, smolarkiewicz2016simulation}, and electromagnetics \citep{assous1992particle, ismagilov2015second}. In scientific and engineering computing, unstructured meshes are extensively utilized as the input for numerical simulations, particularly for solving partial differential equations (PDEs). To accurately capture the complex geometry and physical phenomena, it is often imperative to efficiently generate a large unstructured mesh with a substantial number of triangular/tetrahedral elements, while adhering to stringent quality standards. The computational complexity of state-of-the-art mesh generation algorithms often poses severe challenges in the generation of a large-scale, high-quality mesh on a single computing machine, rendering the mesh generation process inefficient and arduous. Despite that the computational simulations (i.e., PDEs solver) can be performed efficiently and in high parallelism \citep{gorobets2022heterogeneous, mullowney2021preparing, rasquin2011electronic, sahni2009scalable} with high-performance computing (HPC) hardware becoming increasingly powerful, the generation of unstructured meshes remains a significant bottleneck  \citep{lintermann2014massively, slotnick2014cfd}. 

During the past few decades, various  algorithms for efficient unstructured triangular/tetrahedral mesh generation have been proposed, of which the most popular ones are the Delaunay meshing method \citep{chernikov2009generalized, chew1997guaranteed, hang2015tetgen, shewchuk2002delaunay}, particle-based method \citep{fu2019isotropic, zheleznyakova2013molecular}, and advancing front technique (AFT) \citep{jin1993generation, lohner1996progress, lohner1988generation, schoberl1997netgen}. The Delaunay meshing method involves constructing a Delaunay tessellation by inserting vertices and edges into a coarse geometric representation, and then obtains unstructured triangular/tetrahedral meshes with high quality, which is mathematically guaranteed to avoid sliver elements \citep{chew1997guaranteed, chernikov2006parallel}. However, this method usually exhibits a main limitation on preserving the boundary integrity for non-convex domains, and the incorporation of non-local operations could inherently introduce round-off errors \citep{mavriplis1995advancing}. 
In the particle-based method, pair-wise forces are introduced to define the total energy or interpolation error, and the mesh generation process relies on the minimization of the aforementioned total energy or interpolation error to obtain the desired meshes, which could introduce a substantial computational overhead \citep{ji2020consistent}. When the AFT is used, the unstructured mesh is gradually generated from geometric boundaries through inward advancement, which is able to ensure the integrity of boundaries. The intersection judgments in AFT are only performed with neighboring faces (edges), thereby reducing the chances of failure induced by round-off errors \citep{mavriplis1995advancing}. Additionally, AFT does not require solving a minimization problem, thus it incurs a lower computational cost as compared to the particle-based method \citep{fu2019isotropic, zheleznyakova2013molecular}.
%Additionally, when using the AFT, it facilitates control over element shape and size throughout the mesh generation process, particularly when varying element sizes are desired in different regions of the domain \citep{lohner1988generation, zienkiewicz2005finite}.
Notably, all these algorithms are sequential ones that can only be performed on a single machine. Therefore the generation of large-scale unstructured meshes using them could be substantially impeded by the memory and efficiency limitations of the hardware. %And it is a nontrivial task to extend these sequential algorithms on parallel computers.

In recent years, a series of efforts have been made in designing parallel unstructured triangular/tetrahedral mesh generation algorithms for large-scale unstructured meshes generation. Typical works include parallel Delaunay meshing methods  \citep{chrisochoides2003parallel, linardakis2006delaunay, said1999distributed, galtier1996prepartitioning, lo2012parallel}, parallel particle-based methods \citep{ji2020consistent, ji2021feature}, and the parallel AFTs \citep{ito2007parallel, lohner2001parallel, lohner2014recent, zhou2022saft}. However, it is still a key challenge to enable high degree of parallelism while retaining the mesh quality and the parallel consistency in the design of parallel mesh generation algorithms \citep{tsolakis2021parallel}.
For example, the well-known parallel Delaunay meshing methods, such as the parallel Delaunay domain decoupling method \citep{linardakis2006delaunay} and the parallel projective Delaunay meshing method \citep{galtier1996prepartitioning}, are able to overcome the memory and efficiency limitations and can scale to O(10) to O(100) processors by decomposing the geometry into continuous subdomains and meshing each subdomain separately. However, in order to construct a global Delaunay tessellation consistent with the sequential one, the subsequent merge of isolated patches into one coherent piece may lead to a large number of non-local operations, which could seriously hamper the level of parallelism \citep{ji2020consistent, lo2012parallel, ji2021feature}. The parallel particle-based mesh generation method can achieve parallelism utilizing hundreds of processors in large-scale scenarios by introducing an approximate pair-wise force that only relies on the local information \citep{ji2020consistent, ji2021feature}. Despite the fact that the parallel particle-based method ensures stringent mesh quality and parallel consistency without significantly deteriorating scalability, employing traditional particle sampling as the initial condition for complex geometries could result in substantial communication overhead \citep{ji2020consistent, ji2021feature}.
%Despite the fact that the parallel particle-based method ensures stringent mesh quality and parallel consistency without significantly deteriorating scalability, there may still remain a substantial computational cost associated with solving the minimization problem. This is primarily due to the requirement that several hundred thousand iterations are needed for the particle distribution to converge to a satisfactory result \citep{ji2020consistent, ji2021feature}. 
%In comparison, the AFT framework also exhibits significant potential for parallelization because it generates meshes in an iterative manner that relies solely on neighboring faces (edges) \citep{mavriplis1995advancing}. 

In comparison, the AFT framework enjoys two major advantages. 
One is that AFT can have higher degree of parallelism as compared to the Delaunay meshing methods due to its ability of iterative mesh generation based solely on neighboring faces (edges) \citep{mavriplis1995advancing, lohner2014recent, tsolakis2021parallel}. 
And the other advantage is that AFT does not require solving expensive physics-motivated governing equations for initial particle sampling as the parallel particle-based methods require, therefore the communication overhead is relatively low \citep{ji2020consistent}.
%And the other advantage is that AFT does not require solving expensive global minimization problem as the particle-based method does. 
However, in contrast to the sequential AFT, it is required for parallel AFTs to deal with mesh intersections during parallel advancement, which is the major difficulty in the design of parallel AFTs  \citep{mavriplis1995advancing}. To address this issue, the shotgun AFT method utilizes fine-grained fragments that are sufficiently spaced apart to accommodate growth, with a side-effect that randomness is introduced into the framework \citep{zhou2022saft}, thereby leading to the lack of parallel consistency. Another strategy is the parallel AFT method based on the decomposition by coarse geometry \citep{ito2007parallel, lohner2014recent} or coarse octree \citep{lohner2001parallel}, which can restrict advancements to predetermined subdomains. While this strategy helps mitigate the mesh intersection problem, the generated meshes are influenced by the number of subdomains and different decomposition \citep{ito2007parallel, lohner2001parallel, lohner2014recent}. In essence, a major drawback of state-of-the-art parallel AFTs is that the meshes generated could exhibit variations depending on the number of processors utilized or the prior decompositions applied, i.e., they lack the parallel consistency \citep{ito2007parallel, lohner2001parallel, lohner2014recent, zhou2022saft}.

In this work, we propose a new algorithm called consistent parallel advancing front technique (CPAFT) for parallel  unstructured triangular/tetrahedral mesh generation, which can address the mesh intersection problem with parallel consistency, thus can obtain the same unstructured mesh as the sequential AFT does. We first introduce a non-overlapping domain decomposition method based on space-filling curves (SFC) \citep{aluru1997parallel, bader2012space, borrell2018parallel, sagan2012space} into the AFT framework, which can distribute the task of mesh generation to multiple processors and construct geometric invariant global index. Then, we extend each subdomain with $\delta$ layers of background meshes to a larger subdomain and obtain an overlapping domain decomposition. We combine the overlapping domain decomposition and the forest-of-octrees (or quadtree) approach \citep{isaac2015recursive} to propose a distributed forest-of-overlapping-trees approach, which can efficiently address the intersection judgments between newly generated elements and existing elements, especially for cases involving elements belonging to different subdomains. To avoid mutual mesh intersections between newly generated elements, we design a consistent parallel maximal independent set (MIS) algorithm by extending the original MIS algorithm \citep{luby1985simple} with the overlapping trees. With the SFC-based geometric invariant global index and the consistent parallel MIS algorithm, we demonstrate that the newly proposed CPAFT algorithm can obtain the same meshes as the meshes generated by the sequential AFT. Numerical experiments validate the parallel consistency and demonstrate that the CPAFT algorithm can scale well to over 2,000 processors. To the best of our knowledge, this is the first parallel unstructured triangular/tetrahedral mesh generator that can handle 3D complex geometries and scales to over O(1,000) processors. 

The remainder of the paper is organized as follows. In Section~\ref{section: preliminaries}, the unstructured triangular/tetrahedral meshes generation problem and the sequential AFT with mesh quality enhancement are introduced. Section~\ref{section: CPAFT} presents the SFC-based domain decomposition, the distributed forest-of-overlapping-trees approach, the consistent parallel MIS algorithm, and the workflow of the CPAFT algorithm. Results of a series of numerical experiments are presented in Section~\ref{sec: validation}. And the paper is concluded in Section~\ref{sec: conclusions}.

%% For citations use: 
%%       \citet{<label>} ==> Jones et al. [21]
%%       \citep{<label>} ==> [21]
%%

\section{The advancing front technique (AFT)}
\label{section: preliminaries}
%In this section, we shall introduce the problem of generating unstructured triangular/tetrahedral meshes for bounded domains in $\mathbb{R}^d\ (d = 2, 3)$, and review the traditional AFT with mesh quality enhancement.
In this paper, we consider the unstructured mesh generation problem for triangular/tetrahedral meshes. 
%As unstructured triangles or tetrahedra can generally be represented as $d$-dimensional simplexes ($d = 2, 3$), in this section, we shall introduce the problem of generating unstructured $d$-dimensional simplex meshes for bounded domains in $\mathbb{R}^d\ (d = 2, 3)$, and review the traditional AFT with mesh quality enhancement, which is currently one of the most popular sequential approaches among mainstream methods.
Let us assume that $\Omega\subset \mathbb{R}^{d}$ is a single connected domain with boundary $\partial\Omega\subset \mathbb{R}^{d-1}$. For multiple connected domains, the unstructured mesh can be generated separately. Initially, the boundary $\partial\Omega$ is divided into reasonably small face (edge) meshes, which provides an approximation of the geometric boundary. Let $\mathcal{F}_{0}$ denote the set of the face (edge) meshes. %In this paper, we assume that the normal direction of all face meshed points towards the interior of the domain $\Omega$. 
Let $\Omega_h$ denote the polyhedral domain formed by the boundary set $\mathcal{F}_{0}$, which serves as an approximation to $\Omega$. The problem of generating unstructured triangular/tetrahedral meshes for domain $\Omega_h$ is defined as follows.
\begin{problem}
For a given boundary set $\mathcal{F}_{0}$, find a partition $\mathbb{E}_h$ of domain $\Omega_h$ such that $\overline{\Omega_{h}} = \cup_{e_{k}\in \mathbb{E}_h} \overline{e_k}$, $e_{k_1}\cap e_{k_2}=\emptyset$ for $k_1\neq k_2$, and $\mathcal{F}_{0} =\cup_{e_{k}\in \mathbb{E}_h} \overline{e_k}\cap \partial\Omega_h $.
\label{prob}
\end{problem}

As a popular sequential algorithm for solving Prob.~\ref{prob}, the AFT \citep{jin1993generation, lohner1996progress, lohner1988generation, schoberl1997netgen} can be viewed as an iterative method. At each iteration, the it generates elements from the current front set $\mathcal{F}$, which is initially set as $\mathcal{F}_0$ and updated at each iteration. The AFT terminates until the entire domain is completely covered by elements. A notable advantage of the AFT is that points and elements are generated simultaneously, allowing for easy control of the shape and size of the elements during the mesh generation process.

\iffalse
\begin{figure}[H]
		\centering
		\subfigure[${\Omega}_h$ and $\mathcal{F}_0$.]{
		\begin{minipage}[c]{0.45\textwidth}
		  \centering
		  %\includegraphics[width=0.82\linewidth]{figure/gear_model.pdf}
            \includegraphics[width=0.82\textwidth]{}
		\end{minipage}
		} \subfigure[Meshes after one iteration.]{
		\begin{minipage}[c]{0.45\textwidth}
		  \centering
		  %\includegraphics[width=0.82\linewidth]{figure/gear_step5.pdf}
            \includegraphics[width=0.82\textwidth]{}
		\end{minipage}
		} 
\caption{Example of advancing front. Blue points on the right-hand side are the advancing points generated in the first iteration of the AFT.}
\label{fig: AFT}
\end{figure}
\fi
\begin{figure}[H]
\centering
    \begin{tikzpicture} [xscale=2.0, yscale = 2.0, every node/.style={scale=1}]
        \coordinate (A1) at (1.25, 2.5-0.75);
        \coordinate (A2) at (1.55, 2.5-0.65);
        \coordinate (A3) at (1.75, 2.5-0.35);
        \coordinate (A4) at (1.85, 2.5-0.05);
        \coordinate (A5) at (2.15, 2.5-0.00);
        \coordinate (A6) at (2.45, 2.5-0.15);
        \coordinate (A7) at (2.60, 2.5-0.40);
        \coordinate (A8) at (2.65, 2.5-0.75);
        \coordinate (A9) at (2.50, 2.5-1.15);
        \coordinate (A10) at (2.52, 2.5-1.55);
        \coordinate (A11) at (2.35, 2.5-1.85);
        \coordinate (A12) at (2.25, 2.5-2.15);
        \coordinate (A13) at (2.10, 2.5-2.35);
        \coordinate (A14) at (1.80, 2.5-2.45);
        \coordinate (A15) at (1.45, 2.5-2.45);
        \coordinate (A16) at (1.15, 2.5-2.35);
        \coordinate (A17) at (0.85, 2.5-2.45);
        \coordinate (A18) at (0.55, 2.5-2.35);
        \coordinate (A19) at (0.35, 2.5-2.05);
        \coordinate (A20) at (0.05, 2.5-1.90);
        \coordinate (A21) at (0.00, 2.5-1.60);
        \coordinate (A22) at (0.15, 2.5-1.35);
        \coordinate (A23) at (0.35, 2.5-1.15);
        \coordinate (A24) at (0.50, 2.5-0.85);
        \coordinate (A25) at (0.75, 2.5-0.95);
        \coordinate (A26) at (0.95, 2.5-0.65);
        \draw[fill=gray, fill opacity=0.1] (A1)--(A2)--(A3)--(A4)--(A5)--(A6)--(A7)--(A8)--(A9)--(A10)--(A11)--(A12)--(A13)--(A14)--(A15)--(A16)--(A17)--(A18)--(A19)--(A20)--(A21)--(A22)--(A23)--(A24)--(A25)--(A26)-- cycle;
        \foreach \point in {A1, A2, A3, A4, A5, A6, A7, A8, A9, A10, A11, A12, A13, A14, A15, A16, A17, A18, A19, A20, A21, A22, A23, A24, A25, A26} {
            \filldraw[fill=red] (\point) circle (0.5pt);
        }
        \draw[thick, red, line width=0.45pt] (A1)--(A2)--(A3)--(A4)--(A5)--(A6)--(A7)--(A8)--(A9)--(A10)--(A11)--(A12)--(A13)--(A14)--(A15)--(A16)--(A17)--(A18)--(A19)--(A20)--(A21)--(A22)--(A23)--(A24)--(A25)--(A26)--(A1);

        \coordinate (B1) at (1.05, 2.5-1.20);
        \coordinate (B2) at (1.30, 2.5-1.26);
        \coordinate (B3) at (1.45, 2.5-1.43);
        \coordinate (B4) at (1.48, 2.5-1.68);
        \coordinate (B5) at (1.37, 2.5-1.89);
        \coordinate (B6) at (1.16, 2.5-2.00);
        \coordinate (B7) at (0.90, 2.5-1.95);
        \coordinate (B8) at (0.73, 2.5-1.79);
        \coordinate (B9) at (0.69, 2.5-1.54);
        \coordinate (B10) at (0.82, 2.5-1.30);
        \draw[fill=white, fill opacity=1.0] (B1)--(B2)--(B3)--(B4)--(B5)--(B6)--(B7)--(B8)--(B9)--(B10)--cycle;
        \foreach \point in {B1, B2, B3, B4, B5, B6, B7, B8, B9, B10} {
            \filldraw[fill=red] (\point) circle (0.5pt);
        }
        \draw[thick, red, line width=0.45pt] (B1)--(B2)--(B3)--(B4)--(B5)--(B6)--(B7)--(B8)--(B9)--(B10)--(B1);
        \node at (2.00, 1.65) {$\Omega_{h}$};
        \node at (0.20, 1.50) {$\mathcal{F}_{0}$};

        \draw[->] (2.7,0.9) -- (3.8,0.9) node[midway, above] {$1_{st}\ its$};

        \coordinate (AA1) at (1.25+4.0, 2.5-0.75);
        \coordinate (AA2) at (1.55+4.0, 2.5-0.65);
        \coordinate (AA3) at (1.75+4.0, 2.5-0.35);
        \coordinate (AA4) at (1.85+4.0, 2.5-0.05);
        \coordinate (AA5) at (2.15+4.0, 2.5-0.00);
        \coordinate (AA6) at (2.45+4.0, 2.5-0.15);
        \coordinate (AA7) at (2.60+4.0, 2.5-0.40);
        \coordinate (AA8) at (2.65+4.0, 2.5-0.75);
        \coordinate (AA9) at (2.50+4.0, 2.5-1.15);
        \coordinate (AA10) at (2.52+4.0, 2.5-1.55);
        \coordinate (AA11) at (2.35+4.0, 2.5-1.85);
        \coordinate (AA12) at (2.25+4.0, 2.5-2.15);
        \coordinate (AA13) at (2.10+4.0, 2.5-2.35);
        \coordinate (AA14) at (1.80+4.0, 2.5-2.45);
        \coordinate (AA15) at (1.45+4.0, 2.5-2.45);
        \coordinate (AA16) at (1.15+4.0, 2.5-2.35);
        \coordinate (AA17) at (0.85+4.0, 2.5-2.45);
        \coordinate (AA18) at (0.55+4.0, 2.5-2.35);
        \coordinate (AA19) at (0.35+4.0, 2.5-2.05);
        \coordinate (AA20) at (0.05+4.0, 2.5-1.90);
        \coordinate (AA21) at (0.00+4.0, 2.5-1.60);
        \coordinate (AA22) at (0.15+4.0, 2.5-1.35);
        \coordinate (AA23) at (0.35+4.0, 2.5-1.15);
        \coordinate (AA24) at (0.50+4.0, 2.5-0.85);
        \coordinate (AA25) at (0.75+4.0, 2.5-0.95);
        \coordinate (AA26) at (0.95+4.0, 2.5-0.65);

        \coordinate (BB1) at (1.05+4.0, 2.5-1.20);
        \coordinate (BB2) at (1.30+4.0, 2.5-1.26);
        \coordinate (BB3) at (1.45+4.0, 2.5-1.43);
        \coordinate (BB4) at (1.48+4.0, 2.5-1.68);
        \coordinate (BB5) at (1.37+4.0, 2.5-1.89);
        \coordinate (BB6) at (1.16+4.0, 2.5-2.00);
        \coordinate (BB7) at (0.90+4.0, 2.5-1.95);
        \coordinate (BB8) at (0.73+4.0, 2.5-1.79);
        \coordinate (BB9) at (0.69+4.0, 2.5-1.54);
        \coordinate (BB10) at (0.82+4.0, 2.5-1.30);

        \coordinate (C1) at (1.672522401095416+4.0, 0.9710854464188349);
        \coordinate (C2) at (1.606142355792568+4.0, 0.6104287181349322);
        \coordinate (C3) at (1.666277714796044+4.0, 1.216728247152696);
        \coordinate (C4) at (1.865689865537739+4.0, 1.896204592983967);
        \coordinate (C5) at (1.898079508133067+4.0, 1.122582631689564);
        \coordinate (C6) at (1.892157877967352+4.0, 0.8499316650531591);
        \coordinate (C7) at (2.001794919243112+4.0, 0.3799038105676661);
        \coordinate (C8) at (0.2323703349741227+4.0, 0.8714163514756005);
        \coordinate (C9) at (2.311813878543989+4.0, 2.103936994430781);
        \coordinate (C10) at (1.846657125226136+4.0, 1.39872731068162);
        \coordinate (C11) at (1.486560081971316+4.0, 1.575277417287268);
        \coordinate (C12) at (2.314499963042933+4.0, 1.642906896640141);
        \coordinate (C13) at (1.496095239645822+4.0, 0.42259481089203);
        \coordinate (C14) at (2.165560853342974+4.0, 0.9630781560133278);
        \coordinate (C15) at (0.6417636123997763+4.0, 0.4649136045918634);
        \coordinate (C16) at (1.746864202260333+4.0, 0.4126376084823277);
        \coordinate (C17) at (0.5774521389911499+4.0, 1.194465878591491);
        \coordinate (C18) at (1.240710723153943+4.0, 1.456979452971173);
        \coordinate (C19) at (2.042657883087715+4.0, 2.213863351922206);
        \coordinate (C20) at (0.4749440370858681+4.0, 0.8196034333966584);
        \coordinate (C21) at (0.8050839258464204+4.0, 0.3069772053208298);
        \coordinate (C22) at (1.019717020045174+4.0, 1.55050533434914);
        \coordinate (C23) at (2.146212724079962+4.0, 1.319755686439904);
        \coordinate (C24) at (1.836948171625927+4.0, 0.619105830287643);
        \coordinate (C25) at (2.081827390862245+4.0, 0.6057777727024676);
        \coordinate (C26) at (1.617297019277091+4.0, 0.2371226484138388);
        \coordinate (C27) at (0.618366212939725+4.0, 1.402329314927789);
        \coordinate (C28) at (1.726807743619687+4.0, 1.628287108069839);
        \coordinate (C29) at (2.008603844698391+4.0, 1.630443565890829);
        \coordinate (C30) at (1.493587743925357+4.0, 1.30097322780122);
        \coordinate (C31) at (0.4088944820160246+4.0, 1.060000078029669);
        \coordinate (C32) at (2.15406305675875+4.0, 1.897043316510624);
        \coordinate (C33) at (0.3093271138579296+4.0, 0.6821819715006911);
        \coordinate (C34) at (2.420453324454054+4.0, 1.899397339267561);
        \coordinate (C35) at (1.706946521223329+4.0, 0.778234059147593);
        \coordinate (C36) at (2.262943394355734+4.0, 1.145708460613308);
        \coordinate (C37) at (1.883191228156107+4.0, 0.2459328134927665);
        \coordinate (C38) at (0.8575913971170984+4.0, 1.396052553123671);
        \coordinate (C39) at (1.369519134311921+4.0, 0.2855574029357616);
        \coordinate (C40) at (0.993079368410553+4.0, 0.3279120180454128);
        \coordinate (C41) at (1.648254281128913+4.0, 1.432424714881378);
        \coordinate (C42) at (0.5054977485522383+4.0, 0.6225044222303509);
        \coordinate (C43) at (2.218617940407926+4.0, 2.271950086588246);
        \coordinate (C44) at (1.357063882100577+4.0, 0.4009573774305945);

        \draw[thick, blue, line width=0.45pt] (AA1)--(C11)--(AA2);
        \draw[fill=blue, fill opacity=0.2] (AA1)--(C11)--(AA2)--cycle;
        \draw[thick, blue, line width=0.45pt] (AA2)--(C4)--(AA3);
        \draw[fill=blue, fill opacity=0.2] (AA2)--(C4)--(AA3)--cycle;
        \draw[thick, blue, line width=0.45pt] (AA3)--(C19)--(AA4);
        \draw[fill=blue, fill opacity=0.2] (AA3)--(C19)--(AA4)--cycle;
        \draw[thick, blue, line width=0.45pt] (AA4)--(C19)--(AA5);
        \draw[fill=blue, fill opacity=0.2] (AA4)--(C19)--(AA5)--cycle;
        \draw[thick, blue, line width=0.45pt] (AA5)--(C43)--(AA6);
        \draw[fill=blue, fill opacity=0.2] (AA5)--(C43)--(AA6)--cycle;
        \draw[thick, blue, line width=0.45pt] (AA6)--(C9)--(AA7);
        \draw[fill=blue, fill opacity=0.2] (AA6)--(C9)--(AA7)--cycle;
        \draw[thick, blue, line width=0.45pt] (AA7)--(C34)--(AA8);
        \draw[fill=blue, fill opacity=0.2] (AA7)--(C34)--(AA8)--cycle;
        \draw[thick, blue, line width=0.45pt] (AA8)--(C12)--(AA9);
        \draw[fill=blue, fill opacity=0.2] (AA8)--(C12)--(AA9)--cycle;
        \draw[thick, blue, line width=0.45pt] (AA9)--(C36)--(AA10);
        \draw[fill=blue, fill opacity=0.2] (AA9)--(C36)--(AA10)--cycle;
        \draw[thick, blue, line width=0.45pt] (AA10)--(C14)--(AA11);
        \draw[fill=blue, fill opacity=0.2] (AA10)--(C14)--(AA11)--cycle;
        \draw[thick, blue, line width=0.45pt] (AA11)--(C25)--(AA12);
        \draw[fill=blue, fill opacity=0.2] (AA11)--(C25)--(AA12)--cycle;
        \draw[thick, blue, line width=0.45pt] (AA12)--(C7)--(AA13);
        \draw[fill=blue, fill opacity=0.2] (AA12)--(C7)--(AA13)--cycle;
        \draw[thick, blue, line width=0.45pt] (AA13)--(C37)--(AA14);
        \draw[fill=blue, fill opacity=0.2] (AA13)--(C37)--(AA14)--cycle;
        \draw[thick, blue, line width=0.45pt] (AA14)--(C26)--(AA15);
        \draw[fill=blue, fill opacity=0.2] (AA14)--(C26)--(AA15)--cycle;
        \draw[thick, blue, line width=0.45pt] (AA15)--(C39)--(AA16);
        \draw[fill=blue, fill opacity=0.2] (AA15)--(C39)--(AA16)--cycle;
        \draw[thick, blue, line width=0.45pt] (AA16)--(C40)--(AA17);
        \draw[fill=blue, fill opacity=0.2] (AA16)--(C40)--(AA17)--cycle;
        \draw[thick, blue, line width=0.45pt] (AA17)--(C21)--(AA18);
        \draw[fill=blue, fill opacity=0.2] (AA17)--(C21)--(AA18)--cycle;
        \draw[thick, blue, line width=0.45pt] (AA18)--(C15)--(AA19);
        \draw[fill=blue, fill opacity=0.2] (AA18)--(C15)--(AA19)--cycle;
        \draw[thick, blue, line width=0.45pt] (AA19)--(C33)--(AA20);
        \draw[fill=blue, fill opacity=0.2] (AA19)--(C33)--(AA20)--cycle;
        \draw[thick, blue, line width=0.45pt] (AA20)--(C8)--(AA21);
        \draw[fill=blue, fill opacity=0.2] (AA20)--(C8)--(AA21)--cycle;
        \draw[thick, blue, line width=0.45pt] (AA21)--(C8)--(AA22);
        \draw[fill=blue, fill opacity=0.2] (AA21)--(C8)--(AA22)--cycle;
        \draw[thick, blue, line width=0.45pt] (AA22)--(C31)--(AA23);
        \draw[fill=blue, fill opacity=0.2] (AA22)--(C31)--(AA23)--cycle;
        \draw[thick, blue, line width=0.45pt] (AA23)--(AA25);
        \draw[fill=blue, fill opacity=0.2] (AA23)--(AA24)--(AA25)--cycle;
        \draw[thick, blue, line width=0.45pt] (AA25)--(C22)--(AA26);
        \draw[fill=blue, fill opacity=0.2] (AA25)--(C22)--(AA26)--cycle;
        \draw[thick, blue, line width=0.45pt] (AA26)--(C22)--(AA1);
        \draw[fill=blue, fill opacity=0.2] (AA26)--(C22)--(AA1)--cycle;

        \draw[thick, blue, line width=0.45pt] (BB1)--(C18)--(BB2);
        \draw[fill=blue, fill opacity=0.2] (BB1)--(C18)--(BB2)--cycle;
        \draw[thick, blue, line width=0.45pt] (BB2)--(C30)--(BB3);
        \draw[fill=blue, fill opacity=0.2] (BB2)--(C30)--(BB3)--cycle;
        \draw[thick, blue, line width=0.45pt] (BB3)--(C1)--(BB4);
        \draw[fill=blue, fill opacity=0.2] (BB3)--(C1)--(BB4)--cycle;
        \draw[thick, blue, line width=0.45pt] (BB4)--(C2)--(BB5);
        \draw[fill=blue, fill opacity=0.2] (BB4)--(C2)--(BB5)--cycle;
        \draw[thick, blue, line width=0.45pt] (BB5)--(C44)--(BB6);
        \draw[fill=blue, fill opacity=0.2] (BB5)--(C44)--(BB6)--cycle;
        \draw[thick, blue, line width=0.45pt] (BB6)--(C40)--(BB7);
        \draw[fill=blue, fill opacity=0.2] (BB6)--(C40)--(BB7)--cycle;
        \draw[thick, blue, line width=0.45pt] (BB7)--(C15)--(BB8);
        \draw[fill=blue, fill opacity=0.2] (BB7)--(C15)--(BB8)--cycle;
        \draw[thick, blue, line width=0.45pt] (BB8)--(C20)--(BB9);
        \draw[fill=blue, fill opacity=0.2] (BB8)--(C20)--(BB9)--cycle;
        \draw[thick, blue, line width=0.45pt] (BB9)--(C17)--(BB10);
        \draw[fill=blue, fill opacity=0.2] (BB9)--(C17)--(BB10)--cycle;
        \draw[thick, blue, line width=0.45pt] (BB10)--(C38)--(BB1);
        \draw[fill=blue, fill opacity=0.2] (BB10)--(C38)--(BB1)--cycle;

        \foreach \point in {C1, C2, C4, C7, C8, C9, C11, C12, C14, C15, C17, C18, C19, C20, C21, C22, C25, C26, C30, C31, C33, C34, C36, C37, C38, C39, C40, C43, C44} {
            \filldraw[fill=blue] (\point) circle (0.65pt);
        }

        \foreach \point in {AA1, AA2, AA3, AA4, AA5, AA6, AA7, AA8, AA9, AA10, AA11, AA12, AA13, AA14, AA15, AA16, AA17, AA18, AA19, AA20, AA21, AA22, AA23, AA24, AA25, AA26} {
            \filldraw[fill=red] (\point) circle (0.5pt);
        }
        \draw[thick, red, line width=0.45pt] (AA1)--(AA2)--(AA3)--(AA4)--(AA5)--(AA6)--(AA7)--(AA8)--(AA9)--(AA10)--(AA11)--(AA12)--(AA13)--(AA14)--(AA15)--(AA16)--(AA17)--(AA18)--(AA19)--(AA20)--(AA21)--(AA22)--(AA23)--(AA24)--(AA25)--(AA26)--(AA1);
        
        \foreach \point in {BB1, BB2, BB3, BB4, BB5, BB6, BB7, BB8, BB9, BB10} {
            \filldraw[fill=red] (\point) circle (0.5pt);
        }
        \draw[thick, red, line width=0.45pt] (BB1)--(BB2)--(BB3)--(BB4)--(BB5)--(BB6)--(BB7)--(BB8)--(BB9)--(BB10)--(BB1);
    \end{tikzpicture}
    \caption{Example of advancing front. Blue points on the right-hand side are the advancing points generated in the first iteration of the AFT.}
    \label{fig: AFT}
\end{figure}

In what follows, we assume the normal direction of each front points towards the interior of the domain $\Omega_h$, which servers as the direction of an advancement during one iteration. As shown in Figure~\ref{fig: AFT}, during each iteration of the AFT, the front set $\mathcal{F}$ is updated and traversed by searching for potential advancing points, which can be categorized into two cases: new potential points on the perpendicular and existing vertices of nearby fronts. To control the quality of elements generated by the AFT, two criteria \citep{zhou2022saft, zienkiewicz2005finite} are adopted to determine whether a potential point is a legal advancement. Assuming that the maximum scale is $h_M$, the minimum scale is $h_m$, and the local scale near front $f \in \mathcal{F}$ is denoted as $h$, the widely used two criteria can be described as follows.

\smallskip
\textbf{Criterion A} (for the new advancing points).
\begin{itemize}
    \item The newly generated front should not intersect with other nearby fronts, and the newly generated elements should not contain any other fronts.
    \item The distance between nearby vertices and the newly generated fronts should be no less than $\beta_{1} h$, and the distance between the advancing point $p$ and the nearby fronts should be no less than $\beta_{1} h$.
    \item If the triangular/tetrahedral element composed of the advancing point $p$ and a nearby front $f$ does not intersect with any other fronts, and the distance between $p$ and the center of front $f$ is no less than $R(h_M, h_m, h)$, then it is required that the perpendicular distance from $p$ to $f$ should be no less than $\eta h$.
\end{itemize}

\textbf{Criterion B} (for the existing advancing points).
\begin{itemize}
    \item The newly generated fronts should not intersect with other nearby fronts, and the newly generated elements should not contain any other fronts.
    \item The distance between nearby vertices and the newly generated fronts should be no less than $\beta_{2} h$.
    \item The distance between the advancing point $p$ and the center of front $f$ should be no less than $R(h_M, h_m, h)$, while the perpendicular distance from $p$ to $f$ is no less than $\eta h$.
    \item (Optional) The newly generated fronts should not cause mutual obstruction with nearby fronts.
\end{itemize}
In the above two criteria, the parameters $\beta_{1}$ and $\beta_{2}$, which are both in the range of $(0, 1)$, are used to control the mesh quality. Meanwhile, the parameter $\eta\in (0, 1)$ ensures a lower bound on the measurement of the newly generated face, denoted as $\epsilon h^{d-1}$. $R(h_M, h_m, h)$ is a user-defined function to control the size of the element, such as $R(h_M, h_m, h)=\min(3h, 1.5h_M)$. The Minkowski difference and Gilbert--Johnson--Keerthi (GJK) algorithm \citep{gilbert1988fast} are utilized to determine whether two polyhedrons intersect with each other.

After the AFT terminating, a set of triangles/tetrahedra is generated. According to ref. \citep{cavalcante2001algorithm}, the mesh quality of triangles/tetrahedra can be quantified by  
\begin{equation}
  \alpha = {d} \frac{R_i}{R_c},
  \label{equ: quality}
\end{equation}
where $R_i$ and $R_c$ denote the radii of the inscribed and circumscribed spheres of triangles/tetrahedra in $\mathbb{R}^{d} (d=2,3)$, respectively.
The range of $\alpha$ is $(0, 1]$, and its optimal value is $1.0$ when the triangle/tetrahedron is regular. In this work, we require that the mesh quality indicator $\alpha$ should be greater than 0.3. To achieve this goal, a mesh quality optimization technique, such as surface preserving Laplacian smoothing approach \citep{zienkiewicz2005finite, cavalcante2001algorithm, cavendish1974automatic}, should be employed to enhance the mesh quality. In the $(k+1)_{th}$ iteration of the surface preserving Laplacian smoothing approach, the mesh quality is optimized by re-positioning the internal node $p_i^{(k)}$ to
\begin{equation}
 p_i^{(k+1)} = \frac{1}{N}\sum_{j=0}^{N-1} p_{n(j)}^{(k)},
 \label{equ: smooth}
\end{equation}
where $ p_{n(j)}^{(k)}~\hbox{with} \ j = 0, \dots, N-1, $ represent the positions of the $N$ neighboring nodes of the internal node $p_i^{(k)}$ at the $k_{th}$ iteration step. As stated in refs. \citep{zienkiewicz2005finite, cavalcante2001algorithm, cavendish1974automatic}, significant improvements in mesh quality can be achieved with only a few iterations.

The two criteria and the mesh quality enhancement approach play a crucial role in ensuring the quality of the newly generated mesh. Among these criteria, the intersection judgments are particularly important. {The intersection judgments are divided into two categories: intersections between newly generated elements and existing elements, and mutual intersections between newly generated elements.} Notably, the intersection judgment introduces irregular communication, which poses challenges in designing parallel AFT algorithms. Currently, only a few parallel AFT algorithms exist, such as those proposed in refs. \citep{ito2007parallel, lohner2001parallel, lohner2014recent, zhou2022saft}. The unstructured triangular/tetrahedral meshes generated by these parallel AFT algorithms are usually not consistent with those obtained by  sequential AFT, i.e., the parallel consistency is not guaranteed. In the next section, we propose the CPAFT algorithm, which can generate unstructured triangular/tetrahedral meshes consistent with the sequentially generated ones.

\section{The consistent parallel advancing front technique (CPAFT)}
\label{section: CPAFT}
%In this section, we present a novel approach called CPAFT for generating unstructured triangular/tetrahedral meshes. The CPAFT algorithm combines three key techniques: the sequential AFT, the domain decomposition method, and the consistent parallel MIS approach. In Subsection~\ref{subsection: DD-SFC}, we first construct a non-overlapping domain decomposition for distributed mesh generation and storage of data and an overlapping domain decomposition for communication between processors. To ensure parallel consistency, we then introduce a new parallel MIS approach in Subsection~\ref{subsection: CPMIS}. We summarize the CPAFT algorithm and analyze its convergence in Subsection~\ref{subsection: workflow}. 

In this section, we present a novel consistent parallel AFT (CPAFT) for generating unstructured triangular/tetrahedral meshes. To address the mesh intersection issue with parallel consistency, we combine three key techniques: the SFC-based domain decomposition, the distributed forest-of-overlapping-trees approach, and the consistent parallel MIS approach. 
%In Subsection~\ref{subsection: DD-SFC}, we innovatively introduce the SFC approach into the AFT mesh generation problem, constructing domain decomposition and generating geometric invariant global index for fronts. In Subsection~\ref{subsection: overlap}, we proposed a distributed forest-of-overlapping-trees approach to efficiently address the intersection judgments between newly generated elements and existing elements. With the tree data structure, one can efficiently obtain and represent the fronts in the overlapping parts. The presence of overlapping parts allows for front advancements to extend beyond subdomain boundaries. %The octree (or quadtree) approach helps to efficiently obtain and represent the fronts in the overlapping parts.
%The presence of overlapping parts allows that front advancements can extend beyond subdomain boundaries thus ensuring parallel consistency, and the octree (or quadtree) approach is employed to efficiently obtain and represent the fronts in the overlapping parts. Furthermore, in order to handle the issue of mutual mesh intersections between newly generated elements in a consistent parallel manner, we propose a new parallel MIS approach in Subsection~\ref{subsection: CPMIS}. Lastly, we summarize the algorithm workflow and analyze its convergence in Subsection~\ref{subsection: workflow}.

% highlight 1-2
\subsection{Domain decomposition based on the space-filling curve (SFC)}
\label{subsection: DD-SFC}

We first employ the non-overlapping domain decomposition method to divide the domain $\Omega_h$ into multiple subdomains, with each subdomain being handled by a processor. The non-overlapping domain decomposition enables distributed storage of the mesh information during the generation process, thus can overcome the memory limitations in large-scale mesh generation. The partitioning of the domain $\Omega_h$ is performed on top of the Cartesian background meshes, as depicted in Figure~\ref{fig:DD-SFC}. The Cartesian background meshes are geometrically sequenced small boxes generated by the SFC approach, which is a popular method for mesh partitioning, i.e., domain decomposition of pre-generated meshes \citep{aluru1997parallel, bader2012space, borrell2018parallel, sagan2012space}. 
To the best of our knowledge, this is the first attempt to integrate the SFC approach into the mesh generation process to achieve parallelism, instead of performing partitioning after the mesh is generated.
 
Consider a ${\mathcal{L}}_{th}$ level SFC with ${\cal B}=\{b_k,\ k = 0, \dots, (2^{\cal L})^{d}-1\}$ representing the set of all small boxes. Let $\Omega_h^i$ with $i=1,\ldots, n$ correspond to a non-overlapping domain decomposition of $\Omega_h$. As the level of SFC influences the balance of domain decomposition, it is necessary for the level of SFC to be matched with the mesh scale $h$. Each subdomain $\Omega_h^i$ is covered by a union of multiple small boxes from ${\cal B}$, denoted as ${\cal B}_{i}$ (called background subdomain). Additionally, the domain decomposition must satisfy the constraint that for any two small boxes $b_{k(i)}\in{\cal B}_{i}$ and $b_{k(j)}\in{\cal B}_{j}$ with $i<j$, it holds that $k(i) < k(j)$. According to the domain decomposition, the front set ${\cal F}$ at each iteration is divided into $n$ disjoint subsets, denoted as ${\cal F}_1,\ldots,{\cal F}_n$. The $n$ subsets satisfy that
\begin{equation}
\begin{aligned}
&\mathcal{F}_1 \cup \mathcal{F}_2 \cup \dots \cup \mathcal{F}_n = \mathcal{F},\\
&\mathcal{F}_i \cap \mathcal{F}_j = \emptyset, \ \ \ \ \  \forall i \neq j.
\end{aligned}
\end{equation}

Analogous to $\Omega_h^i$, the subset ${\cal F}_i$ is also assigned to the $i_{th}$ processor. Let $|{\cal F}_i|$ represent the number of fronts in ${\cal F}_i$ and $N_e^i$ denote the number of generated elements stored in processor $i$. %These quantities should be associated with the computational tasks and storage requirements for each processor, respectively. 
We introduce an indicator $W_i = k_f |{\cal F}_i| + k_e N_e^i$ for $\Omega_h^i$, where $k_f$ and $k_e$ are typically set to $(3, 1)$. To achieve good parallel efficiency, we divide the domain $\Omega_h$ based on the following three criteria.

\smallskip
\textbf{Criteria for domain decomposition} 
\begin{itemize}
    \item \textbf{C1.} Each subdomain $\Omega_h^i$ is an intersection between $\Omega_h$ and a union of some background meshes connected by an SFC.
    \item \textbf{C2.} For the requirement of parallel efficiency, the indicators must satisfy the constraint $\min\limits_{1\leq i \leq n} W_i > 0.5\ \max\limits_{1\leq i \leq n}W_i$. The threshold parameter $0.5$ can be adjusted based on the specific geometry of the domain $\Omega_h$.
    \item \textbf{C3.} Furthermore, the area of the interface between subdomains should be minimized.
\end{itemize}
Since the front set ${\cal F}$ is updated dynamically at each iteration, the indicator $W_i$ will also change accordingly. In this study, we repartition the domain decomposition based on \textbf{criterion C2} every few iteration steps, to maintain load balance. 

\ 

\begin{figure}[H]
		\centering
		\subfigure[Decomposition for $\Omega_h$ and current front]{
		\begin{minipage}[c]{0.465\textwidth}
		  \centering
            \includegraphics[width=0.85\textwidth]{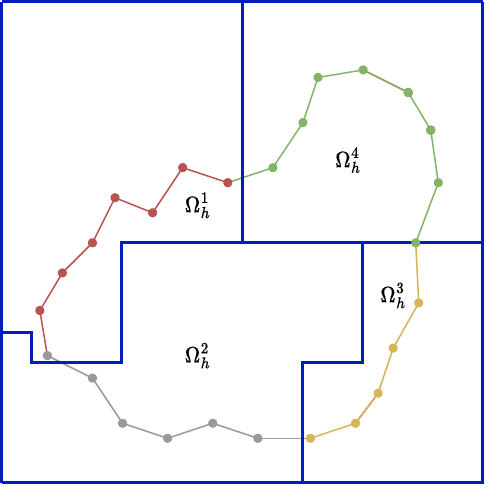}
		\end{minipage}
		} \subfigure[SFC-based Decomposition for background]{
		\begin{minipage}[c]{0.465\textwidth}
		  \centering
            \includegraphics[width=0.85\textwidth]{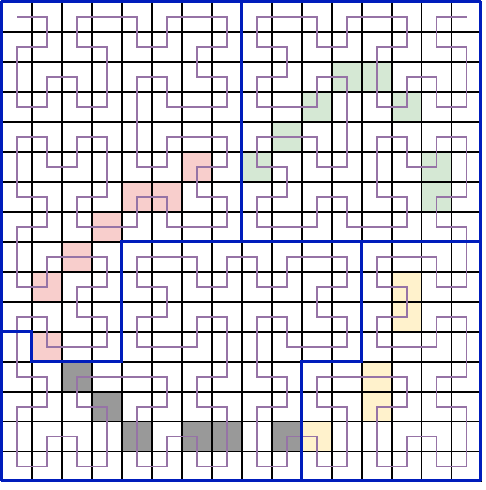}
		\end{minipage}
		} 
\caption{(a) Domain decomposition for a 2D domain with 4 processors: $\Omega_h=\cup_{i=1}^4 \Omega_h^i$. The blue lines in the plots are the boundaries of background subdomains. The current fronts are simultaneously assigned to four processors. The fronts with the same color are handled by the same processor. (b) Domain decomposition on top of the Cartesian background meshes based on a Hilbert-SFC. The boxes filled with different colors correspond to the current fronts.}
\label{fig:DD-SFC}
\end{figure}

\ 

\iffalse
\begin{figure}[H]
\centering
\includegraphics[width=1.0\linewidth]{}
\caption{Domain decomposition for a 2D domain with 4 processors: $\Omega_h=\cup_{i=1}^4 \Omega_h^i$. The blue lines in the plots are the internal boundaries of subdomains. The current fronts, represented by four different colors, are stored as a quadtree data structure, denoted as FrontTree \cite{zhou2022saft}. Then, the FrontTree is partitioned into 4 subtrees based on the SFC approach and stored as subtrees for each processor.}
\label{fig:DD-SFC}
\end{figure}
\fi

For a given set of subsets ${\cal F}_i$ with $i=1,\ldots, n$, we define the local index of front $f\in {\cal F}_k$ as $\textsl{\textbf{Id}}(f, k; n)$. The local index $\textsl{\textbf{Id}}(f, k; n)$ sorts from 0 to $|{\cal F}_k|-1$ in ascending order of small box index, where $f$ is embedded, as shown in Figure~\ref{fig:DD-SFC}. The global index of $f$ is then set as: 
$$\textbf{GI}(f):=\textsl{\textbf{Id}}(f, k; n) + \sum\limits_{i=1}^{k-1}|\mathcal{F}_{i}|.$$ 
According to the following Property~\ref{idconsistency} of SFC approach \citep{bader2012space, sagan2012space}, we can conclude that the global index $\textbf{GI}(f)$ remains invariant even when the domain decomposition of $\Omega_h$ changes. 
This global invariant index $\textbf{GI}(f)$ plays a crucial role in the process of identifying a set of non-intersecting advancements.

\begin{property}
\label{idconsistency}
Consider two decompositions of current fronts set ${\cal F}$, denoted as ${\cal F}=\cup_{i=1}^n{\cal F}_i$ and ${\cal F}=\cup_{i=1}^m\tilde{\cal F}_i$. Assume that a front $f$ is in subset ${\cal F}_k$ and also in subset $\tilde{\cal F}_l$, respectively. Then we have
\begin{equation}
    \textbf{Id}(f, k; n) + \sum_{i=1}^{k-1}|\mathcal{F}_{i}| = \textbf{Id}(f, l; m) + \sum_{i=1}^{l-1}|\tilde{\mathcal{F}}_{i}|.
\end{equation}
\end{property}

%Based on the non-overlapping domain decomposition, the generated elements, advancing fronts, background meshes, and octree (or quadtree) data are parallelly assigned to different processors, enabling distributed storage of mesh information during the generation process. This is the first major innovation of the newly proposed parallel mesh generator. 

\subsection{Distributed forest-of-overlapping-trees approach}
\label{subsection: overlap}
%To deal with the intersection judgments between newly generated elements and existing elements in a consistent parallel manner, we propose a new distributed forest-of-overlapping-trees approach and construct an overlapping domain decomposition, which can be viewed as the overlapping version of distributed forest-of-octrees approach \citep{isaac2015recursive}. 
%In contrast to the parallel methods in \citep{ito2007parallel, lohner2001parallel, lohner2014recent}, the overlapping domain decomposition is obtained to ensure that the advancements are not limited by the boundaries of subdomains, so as to ensure that the attainment of a set of legal advancements is consistent with the sequential AFT. Moreover, a distributed octree (or quadtree) data structure is utilized to improve the efficiency of searching and inserting overlapping fronts.

On parallel computers, the intersection judgements between newly generated elements and existing elements could involve the interaction of fronts belonging to different processors. To address the interaction without imposing constraints on subdomain boundaries, we introduce an overlapping domain decomposition on top of the background meshes based on the non-overlapping domain decomposition.  
%In comparison to the sequential AFT, to deal with the intersection judgments between newly generated elements and existing elements in a consistent parallel manner, the newly proposed parallel AFT algorithm requires the interaction of fronts that belong to different processors. According to \textbf{Criterion A} and \textbf{Criterion B}, this interaction is focused on a front and its neighboring fronts. 
%To obtain information about neighboring fronts through communication, we introduce an overlapping domain decomposition on top of the background meshes based on the non-overlapping domain decomposition. 
As shown in Figure~\ref{fig: OverlappingTree}, we extend each background subdomain with $\delta$ layers background meshes to create a larger background subdomain. Similarly, the sub-front set ${\cal F}_i$ is also extended to  ${\cal F}_i^\delta$. Then, we define the neighboring set of a front $f$ as $Neighbor(f) = \{ \hat{f} \in \mathcal{F}_{i}^{\delta} \ |\ \textbf{dist}(\hat{f}, f) < \gamma(h) \}$ for searching potential advancing points and the intersection judgements using \textbf{Criterion A} and \textbf{Criterion B}.
%The neighborhoods set of a front $f$ is defined as $Neighbor(f) = \{ \hat{f} \in \mathcal{F}_{i}^{\delta} \ |\ \textbf{dist}(\hat{f}, f) < \gamma(h) \}$, which is used for searching potential advancing points and the judgments for intersections using \textbf{Criterion A} and \textbf{Criterion B}. 
Typically, $\gamma(h)$ is set to $1.5h$. To ensure that the neighborhoods of any front $f$ in ${\cal F}_i$ are included in ${\cal F}_i^\delta$, it is important to have a sufficiently large overlap $\delta$ for each processor. The value of $\delta$ is determined by the ratio of the background mesh scale (associated with the level of SFC) to the mesh scale $h$. In conclusion, the overlapping domain decomposition ensures that the interaction is not limited by the boundaries of subdomains, so that the attainment of a set of potential advancements is consistent with the sequential AFT.
%The value of $\delta$ will be determined by the consistent parallel MIS algorithm introduced in the next subsection. 
%It should be noted that the octree (or quadtree) data structure should also be extended to accommodate the extension of advancing fronts. This extension enables efficient search and insertion fronts, as mentioned earlier. An example is provided in Figure~\ref{fig: OverlappingTree} to illustrate the overlapping domain decomposition, the extension of advancing fronts and the overlapping sub-octree (or sub-quadtree) data structure.

\begin{figure}[H]
		\centering
		\subfigure[overlapping quadtree 1]{
		\begin{minipage}[c]{0.465\textwidth}
		  \centering
            \includegraphics[width=0.8\textwidth]{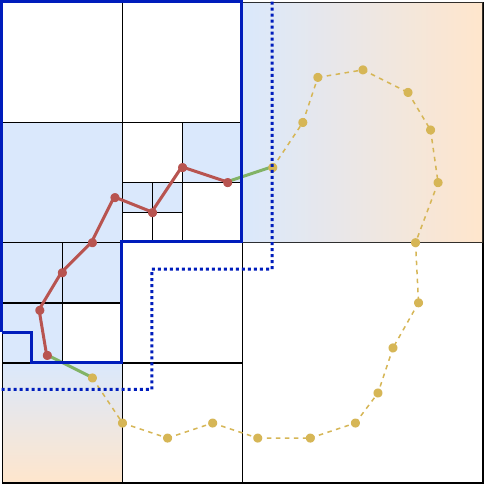}
        \end{minipage}
		} \subfigure[overlapping quadtree 4]{
		\begin{minipage}[c]{0.465\textwidth}
		  \centering
            \includegraphics[width=0.8\textwidth]{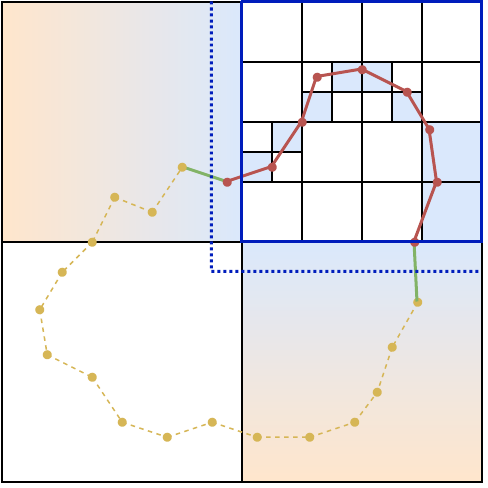}
		\end{minipage}
		} 

        \subfigure[overlapping quadtree 2]{
		\begin{minipage}[c]{0.465\textwidth}
		  \centering
            \includegraphics[width=0.8\textwidth]{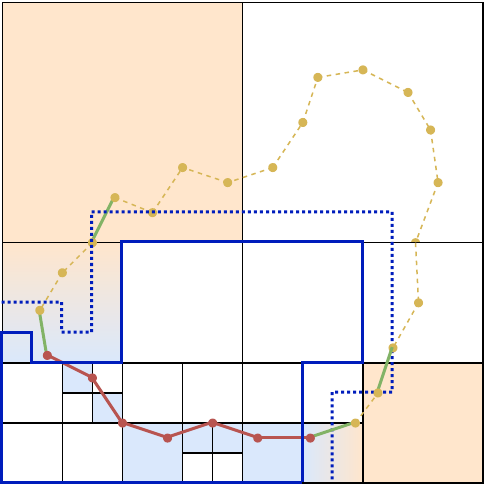}
		\end{minipage}
		} \subfigure[overlapping quadtree 3]{
		\begin{minipage}[c]{0.465\textwidth}
		  \centering
            \includegraphics[width=0.8\textwidth]{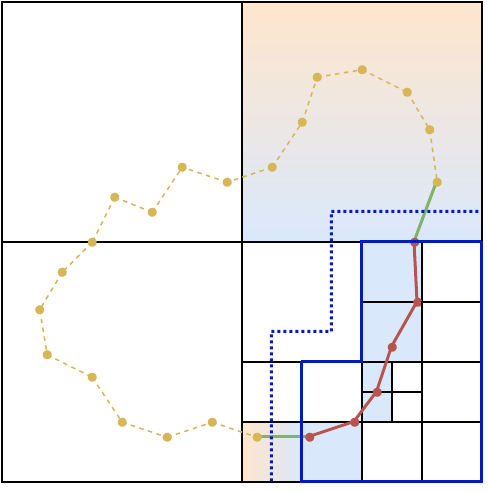}
		\end{minipage}
		} 
\caption{Forest of overlapping trees for a 2D problem with 4 processors. The red-colored, green-colored, and yellow-colored faces represent the fronts belonging to the current fronts subset, the extended neighboring fronts set, and the others, respectively. The solid blue lines and the dotted blue lines respectively represent the boundaries of the non-overlapping background subdomains and overlapping background subdomains with $\delta = 1$. The blue boxes denote the quadtree owned by the current subdomain, and the boxes transitioning from blue to yellow illustrate the expansion of the owned quadtree. }
\label{fig: OverlappingTree}
\end{figure}

%To store the front set ${\cal F}$, we utilize an octree (quadtree) data structure for 3D (2D) domains. This data structure is generated based on the Cartesian background meshes, as illustrated in Figure~\ref{fig: OverlappingTree}. In the case of a 2D domain, a leaf node in the quadtree data is divided into four leaf nodes until each leaf node stores at most one front. This process establishes an injection mapping from the front set ${\cal F}$ to the quadtree data. In comparison to a sequential AFT, the newly proposed parallel method requires the interaction of fronts that belong to different processors. The injection mapping described earlier facilitates efficient search and insertion by packing and unpacking front data with a computational cost of $O(\log|\mathcal{F}_i|)$. Therefore, the octree (quadtree) data structure significantly enhances the efficiency of the newly proposed parallel method.}

%Then, to efficiently process the interaction of a front and its neighbor fronts, especially in the overlapping part, we utilize distributed forest of overlapping trees to represent the set of subset ${\cal F}_{i}^{\delta}$ with $i = 1, \dots, n$. 
%We then introduce the distributed forest-of-trees approach \citep{isaac2015recursive} to represent the set of overlapping subset ${\cal F}_{i}^{\delta}$ with $i = 1, \dots, n$. 
In order to efficiently represent the overlapping subsets ${\cal F}_{i}^{\delta}$ with $i = 1, \dots, n$, we  introduce a new distributed forest-of-overlapping-trees approach by extending the origin distributed forest-of-trees approach \citep{isaac2015recursive}. As illustrated in Figure~\ref{fig: OverlappingTree}, in the case of a 2D domain, a leaf node in the quadtree data is divided into several leaf nodes until each leaf node stores at most one front. This process establishes an injection mapping from the front set ${\cal F}$ to the quadtree data. The injection mapping described earlier facilitates efficient search and insertion at a computational cost of $O(\log|\mathcal{F}_i|)$. Meanwhile, the construction of overlapping fronts ${\cal F}_{i}^{\delta} \backslash {\cal F}_{i}$ involves searching in the data of neighboring processors and inserting into the data of the current processor, which correspond to the packing and unpacking processes, respectively, in the communication procedure. As a consequence, the overlapping octree (quadtree) data structure significantly enhances the parallel efficiency of the newly proposed parallel algorithm.

\subsection{Consistent parallel maximal independent set (MIS) approach}
\label{subsection: CPMIS}
%{\color{red} The domain decomposition method proposed in Subsection~\ref{subsection: DD-SFC} provides a framework for distributed storage of data and communication, which can overcome memory limitations in large-scale mesh generation. However, designing a consistent parallel AFT algorithm that satisfies two criteria mentioned in Section~\ref{section: preliminaries} is still an open issue. 
%In this subsection, we address this problem by introducing a new parallel MIS approach. }

%To avoid mutual intersections between newly generated elements and prevent a single advancement from causing two fronts to be too close, we introduce a new consistent parallel MIS approach by combining the MIS approach, the geometric invariant global index in Subsection~\ref{subsection: DD-SFC} and the overlapping trees in Subsection~\ref{subsection: overlap}. Both of them can have beneficial effects on mesh quality and ensure parallel consistency. Compared to the fine-grained parallelism approach with randomness presented in \citep{zhou2022saft}, the newly proposed algorithm avoids intersections during the generation process and is a deterministic parallel algorithm with parallel consistency. 

%In the AFT algorithm, the criteria mentioned in Section~\ref{section: preliminaries} are designed to prevent the newly generated elements from intersecting with the existing elements. However, it is equally important to ensure that the newly generated elements do not intersect with each other.
In each iteration step of AFT, after identifying the potential advancements satisfying \textbf{Criterion A} and \textbf{Criterion B}, searching for mutually non-intersecting advancements within the potential advancements becomes a new challenge, which is essentially equivalent to finding the largest possible subset. The sequential AFT typically achieves this through greedy search, which is computationally expensive and results in significant communication overhead in distributed scenarios. Additionally, the results are influenced by the order of the search. In this work, to improve the efficiency and parallel performance of the AFT algorithm, we propose the consistent parallel MIS approach to replace the greedy search, which can produce more consistent and reliable results by removing the dependence on the search order. 

\ 

\begin{figure}[H]
\centering
\includegraphics[width=0.9\textwidth]{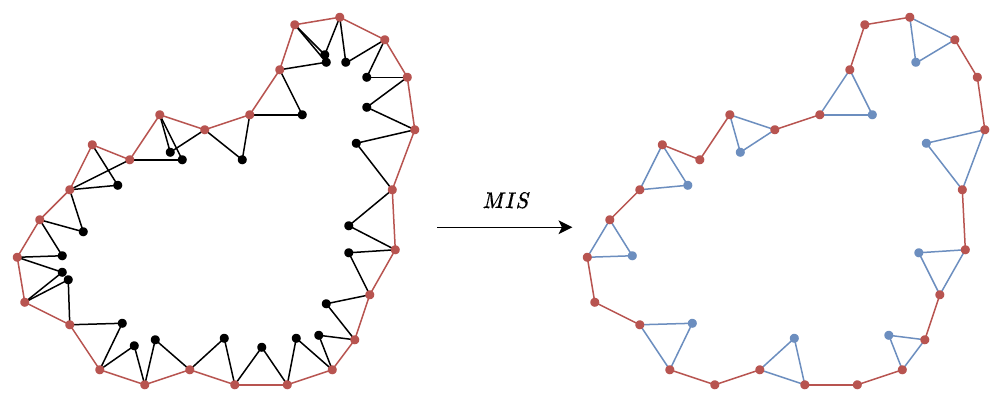}
\caption{The newly generated elements set ${\cal V}$ (left) and a maximal independent set ${\cal V}_s$ (right).
%A global graph $\hat{\mathcal{F}}$ is constructed, and an approximate solution of \Cref{prob: MIS} is used to obtain the reasonable and non-intersecting growth. The left part illustrates the optimal potential advancing points for each fronts in ${\cal{F}}_{a}$, while the right part describes the independent advancements.
}
\label{fig: MISGlobal}
\end{figure}

\ 

To begin, let us define a graph $\hat{\cal{F}}:=({\cal{V}}, {\cal{E}})$ for the current fronts set ${\cal F}$, where ${\cal V}$ and $\cal E$ are the sets of vertices and edges, respectively. For a front $f_i \in {\cal{F}}$, if there exists a corresponding optimal potential advancing point $p_i\in P$, we denote vertex $(f_i,p_i)$ as a newly generated element in $\cal V$. Here, $P$ is the set of all optimal potential advancing points. Two vertices $v_i = (f_i, p_i)$ and $v_j = (f_j, p_j)$ in $\hat{\cal{F}}$ are connected by an edge $e_{i, j} \in {\cal{E}}$ if and only if $\textbf{dist}(p_i, p_j) > h_i+h_j$, where $h_i$ and $h_j$ are the local scales near front $f_i$ and $f_j$, respectively. According to refs. \citep{chernikov2006parallel, zhou2022saft}, the fact that two vertices $v_i$ and $v_j$ are not connected by an edge $e_{i, j}$ is a sufficient but unnecessary condition to efficiently ensure that the two newly generated element $(f_i, p_i)$ and $(f_j, p_j)$ do not intersect. Moreover, this condition prevent a single advancement from causing two fronts to be too close, thereby facilitating the achievement of high-quality mesh.
% According to \cite{chernikov2006parallel, zhou2022saft}, the two newly generated simplexes $(f_i, p_i)$ and $(f_j, p_j)$ do not intersect if the corresponding vertices are not connected by an edge $e_{i, j}$. 
If there exists a sub-graph $\hat{\cal{F}}_s:=({\cal{V}}_s,\emptyset)\subset\hat{\cal{F}}$ such that for every pair of vertices $v_i, v_j \in {\cal{V}}_s$, there is no edge connecting them, then the set ${\cal V}_s$ is defined as an independent set. An independent set is considered as a maximal independent set if every vertex $v_j\in{\cal{V}}\setminus{\cal{V}}_s$ is connected to at least one vertex $v_i\in{\cal{V}}_s$. Therefore, finding the largest possible mutually non-intersecting advancements is equivalent to finding a maximal independent set of graph $\hat{\cal{F}}$, which is a well-known NP-hard problem \citep{lawler1980generating}. In the implementation process, it is common to find an independent set that is as large as possible to improve efficiency. In fact, any non-empty independent subset can be used in the AFT approach. Meanwhile, for any non-empty graph $\hat{\cal{F}}$, there exists at least one non-empty independent subset ${\cal{V}}_s:=\{v\}$ for any $v \in {\cal{V}}$. An illustrative example of the independent set for a given graph $\hat{\cal F}$ is shown in Figure~\ref{fig: MISGlobal}.  

The MIS algorithm \citep{luby1985simple, gfeller2007randomized, panconesi1997randomized} is a well-known method in graph theory, which can find a maximal independent set or as large as possible independent set for a given graph. Inspired by ref. \citep{chan1998agglomeration}, we introduce the heuristic MIS algorithm \citep{luby1985simple} into solving parallel mesh generation problems, which can help resolve the issue of mesh intersections by finding an independent set that is as large as possible. The origin heuristic MIS algorithm \citep{luby1985simple} has two variants, including a parallel Monte Carlo version with random selection and a deterministic version without randomness. In this work, we introduce a new parallel MIS algorithm that is based on the aforementioned deterministic heuristic MIS algorithm and the domain decomposition method proposed in Subsection~\ref{subsection: DD-SFC}.
%The MIS algorithm proposed in \cite{luby1985simple} is a simplified heuristic sequential algorithm used to find an independent set that is as large as possible. 
In the deterministic heuristic MIS algorithm, the vertex set $\cal{V}$ is divided into three disjoint subsets: $\cal{A}$, ${\cal{U}}$, and $\cal{P}$, where ${\cal{V}} = {\cal{A}} \cup {\cal{U}} \cup {\cal{P}}$. These subsets represent the legal advancements, the advancements that need to be discarded (discarded set), and the pending set, respectively. The algorithm begins with ${\cal A} = \emptyset,\ {\cal U} = \emptyset,\ {\cal P} = \cal{V}$ and iteratively moves the vertices in ${\cal P}$ into ${\cal A} $ or  ${\cal U}$ until ${\cal P} = \emptyset$. For more detailed information about the deterministic heuristic MIS algorithm, we recommend referring to the reference \citep{luby1985simple}.

\ 

\begin{figure}[H]
\centering
\includegraphics[width=0.9\textwidth]{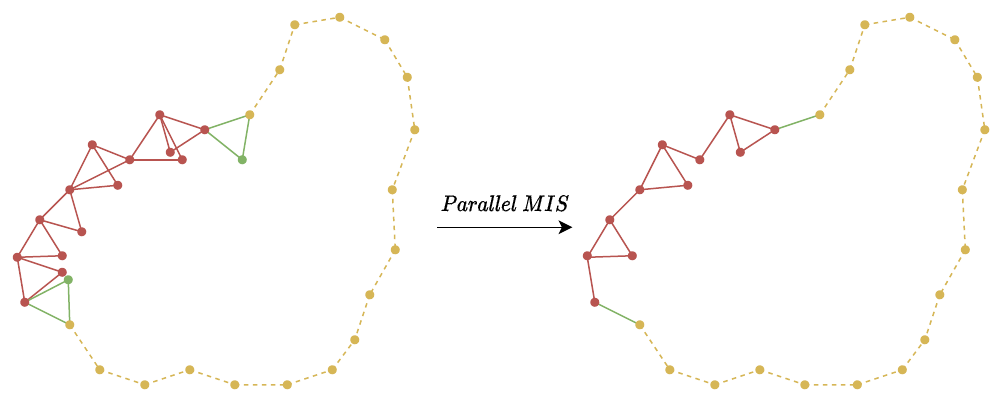}
\caption{In the parallel algorithm with 4 subdomains, we take the fronts in the first subdomain as an example. The newly generated elements set ${\cal V}_1^\delta$ (left) and an independent set ${\cal V}_{1,s}$ (right).
%, the fronts are stored according to the domain decomposition described in \Cref{subsection: DD-SFC}. The $ID$ of the overlapping part (i.e., the green part in the figure) are sent to the current processor for the MIS iterations. The consistent $ID$ leading to a consistent result with the global one.
}
\label{fig: MISLocal}
\end{figure}

\ 

Next, let us introduce a new parallel MIS algorithm based on the domain decomposition method mentioned in the above subsection.
%Then, to solve problem \Cref{prob: MIS} in parallel, we restrict the global problem to the subdomain as \Cref{fig: MISLocal} by extended operator ${\cal{I}}^{\delta}_{i}$ and restricted operator ${\cal{R}}^{i}_{\delta}$ based on the overlapping domain decomposition.
According to the non-overlapping domain decomposition, the front set ${\cal{F}}=\mathcal{F}_1 \cup \mathcal{F}_2 \cup \dots \cup \mathcal{F}_n$. Similarly, the vertex set $\cal{V}$ is divided into $n$ disjoint subsets, denoted as ${\cal{V}}_1, \dots, {\cal{V}}_n$. Each subset ${\cal{V}}_i$ is assigned to and processed by one processor. In a graph $\hat{\cal{F}}$, the neighbor of a vertex $v \in {\cal{V}}_i$ is defined as 
\begin{equation}
    {\cal{N}}(v) = \{\hat v \in {\cal{V}} | v \hbox{ and } \hat v \hbox{ are connected by an edge } e \in {\cal{E}}\}. 
\end{equation}
The extension of $\mathcal{V}_i$ is then defined as $\mathcal{V}_i^\delta=\mathcal{V}_i\cup_{v\in\mathcal{V}_i} {\cal{N}}(v) $ based on the overlapping domain decomposition. Similar to the deterministic heuristic MIS algorithm, the vertex set $\mathcal{V}_i$ and its extension $\mathcal{V}_i^\delta$ are divided as 
\begin{equation}
    \begin{aligned}
        & {\cal{V}}_i = {\cal{A}}_i \cup {\cal{U}}_i \cup {\cal{P}}_i, \ \ \ \ \ \ \ \ \ \ \ i = 1, \dots, n, \\ 
        & {\cal{V}}_i^{\delta} = {\cal{A}}_i^{\delta} \cup {\cal{U}}_i^{\delta} \cup {\cal{P}}_i^{\delta}, \ \ \ \ \ \ \ \ \ i = 1, \dots, n .
    \end{aligned}
\end{equation}

With these notations, the newly proposed parallel MIS algorithm can be summarized in Algorithm~\ref{PMIS}. This algorithm continues until all pending sets ${\cal P}_i$ are empty, at which point it obtains a maximal independent set for the given graph $\hat{\cal F}$. To meet efficient requirements, the Algorithm~\ref{PMIS} terminates after a few iterations to achieve an independent set with a satisfactory amount of advancements. Therefore, the computational complexity of Algorithm~\ref{PMIS} is reduced to the order of  $O(M|{\cal{V}}_{i}|)$ for the $i_{th}$ processor, where $M$ is the predefined number of maximum iteration.

\begin{algorithm} [H]
\caption{The consistent parallel MIS algorithm.}
\label{PMIS}
\begin{algorithmic}
\STATE{\textbf{Initialization}: For each subset ${\cal{V}}_i$, let ${\cal{A}}_i={\cal{A}}_i^\delta = \emptyset,\ {\cal{U}}_i={\cal{U}}_i^\delta = \emptyset,\ {\cal{P}}_i = {\cal{V}}_i,\ {\cal{P}}_i^\delta = {\cal{V}}_i^\delta,\ i = 1, \dots, n$. 
For a vertex $v = (f, p)$, set $ID(v)=\textbf{GI}(f)$.}
\WHILE{$its < M$}
\STATE{\textbf{1.} For a vertex $v \in {\cal P}_i$, if $ID(v) \leq ID(\hat v) $ for all $\hat v \in {\cal N}(v) \cap {\cal P}_i^{\delta}$, then move it into a temporary set ${\cal{T}}_i$. }
\STATE{\textbf{2.} For each entry $v_t \in {\cal{T}}_i$, move $v_t$ into ${\cal{A}}_i$ and move vertices belonging in ${\cal{N}}(v_t) \cap ({{\cal{A}}_{i}^{\delta}} \cup {{\cal{P}}_{i}^{\delta}})$ into ${{\cal{U}}_{i}^{\delta}}$. The discarded set ${\cal U}_i$ should be updated correspondingly. }
%\STATE{\textbf{3.} {\color{red}Synchronize to notify the corresponding processor of the new entries in the overlaping subset ${\cal A}_i^{\delta} \backslash {\cal A}_i$, ${\cal U}_i^{\delta} \backslash {\cal U}_i$ and ${\cal P}_i^{\delta} \backslash {\cal P}_i$.}}
\STATE{\textbf{3.} For each vertex in ${\cal U}_i^{\delta} \backslash {\cal U}_i$, adjust it into ${\cal U}_j$ on the corresponding processor via communication. The other subsets should be updated by communication in a similar manner.}
%{\color{red} Using communication to notify the corresponding processor of the new entries in the overlaping subset ${\cal U}_i^{\delta} \backslash {\cal U}_i$. If an vertex in processor $j$ recives the information from the overlaping subset of any other processor, then move it into ${\cal U}_j$. Finally, the update of subset ${\cal V}_i$ within each processor is communicated to the overlapping parts of other processors for the next iteration. }}
\STATE{\textbf{4.} If $|{\cal{P}}_1 \cup \dots \cup {\cal{P}}_n| > 0$ we return to step \textbf{1} and $its := its + 1$; Otherwise, break.}
\ENDWHILE
\RETURN {${\cal{A}}_i,\ {\cal{U}}_i,\ {\cal{P}}_i$, $i = 1, \dots, n$, and the independent set ${\cal V}_s={\cal{A}}_1\cup\cdots\cup{\cal{A}}_n$. 
%Here ${\cal{A}}_i={\cal V}_{i,s}$ is the legal advancements for the ${i}_{th}$ processor. 
}
\end{algorithmic}
\end{algorithm}

The following lemma will show that Algorithm~\ref{PMIS} can lead to a non-empty independent set ${\cal V}_s$.
%Then the parallel version of \Cref{lemma:independentSet} can be written as:
\begin{lemma}\label{lemma:PMIS}
For the vertex set ${\cal{V}}={\cal{V}}_{1} \cup \dots \cup {\cal{V}}_n$ with $|{\cal{V}}| > 0$,  Algorithm~\ref{PMIS} can obtain an independent set ${\cal V}_s$ with $|{\cal V}_s|>0$.
\end{lemma}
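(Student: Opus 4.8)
The plan is to show that the "while" loop in Algorithm~\ref{PMIS} always moves at least one vertex from the pending set into some legal-advancement set $\mathcal{A}_i$ during its very first pass, which immediately yields $|\mathcal{V}_s|>0$. The key structural fact I would exploit is that $ID(v)=\textbf{GI}(f)$ is an \emph{injective} map from $\mathcal{V}$ to $\{0,1,\dots,|\mathcal{F}|-1\}$ (each front has a unique global index, by the construction in Subsection~\ref{subsection: DD-SFC} and Property~\ref{idconsistency}), so the vertices of $\mathcal{V}$ carry a strict total order via their $ID$ values. In particular, since $|\mathcal{V}|>0$, there is a unique vertex $v^\star$ with the globally minimum $ID$ value.

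First I would locate the processor $i$ that owns $v^\star$, i.e.\ $v^\star\in\mathcal{P}_i$ at the start of the first iteration (recall $\mathcal{P}_i=\mathcal{V}_i$ initially). Next I would check that $v^\star$ satisfies the test in Step~1: we need $ID(v^\star)\le ID(\hat v)$ for every $\hat v\in\mathcal{N}(v^\star)\cap\mathcal{P}_i^\delta$. This holds because $ID(v^\star)$ is the global minimum, so it is $\le ID(\hat v)$ for \emph{all} $\hat v\in\mathcal{V}$, a fortiori for those in $\mathcal{N}(v^\star)\cap\mathcal{P}_i^\delta$; strict injectivity even gives strict inequality for $\hat v\ne v^\star$. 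Hence $v^\star$ is moved into the temporary set $\mathcal{T}_i$ in Step~1. Then in Step~2, every vertex of $\mathcal{T}_i$ — in particular $v^\star$ — is moved into $\mathcal{A}_i$. Therefore after one iteration $\mathcal{A}_i\ne\emptyset$, and since $\mathcal{V}_s=\mathcal{A}_1\cup\cdots\cup\mathcal{A}_n$ only ever grows, the returned set satisfies $|\mathcal{V}_s|>0$.

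It remains to argue that $\mathcal{V}_s$ is genuinely an independent set, not merely non-empty. Here I would observe that a vertex enters $\mathcal{A}_i$ only via $\mathcal{T}_i$, and that Step~2 simultaneously evicts every pending/accepted neighbour of an accepted vertex into the discarded sets $\mathcal{U}^\delta_i$ (propagated to the correct processor in Step~3). Combined with the Step~1 guard — which, via the injectivity of $ID$, can never admit two adjacent vertices into $\mathcal{T}_i$ in the same round, nor a vertex adjacent to one already in some $\mathcal{A}_j$ (that neighbour would already have been discarded) — this shows no edge of $\mathcal{E}$ has both endpoints in $\mathcal{V}_s$. I would phrase this as a short invariant maintained across iterations: at the end of each pass, $\mathcal{A}_1\cup\cdots\cup\mathcal{A}_n$ is independent and every vertex discarded has an accepted neighbour.

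The main obstacle I anticipate is the bookkeeping around the \emph{overlapping} sets $\mathcal{P}_i^\delta$, $\mathcal{A}_i^\delta$, $\mathcal{U}_i^\delta$ and the communication in Step~3: one must be careful that the neighbourhood $\mathcal{N}(v)$ of a vertex is fully visible — i.e.\ contained in $\mathcal{V}_i^\delta$ — so that the local minimality test in Step~1 genuinely reflects \emph{global} minimality among neighbours, and that the eviction in Step~2 is not "lost" across a subdomain interface before Step~3 reconciles it. This is exactly the role of the distributed forest-of-overlapping-trees with a sufficiently large overlap $\delta$ from Subsection~\ref{subsection: overlap}; I would cite that construction to assert $\mathcal{N}(v)\subset\mathcal{V}_i^\delta$ for all $v\in\mathcal{V}_i$, after which the argument above goes through verbatim. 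Everything else is a straightforward finite-termination-plus-invariant argument and requires no delicate estimates.
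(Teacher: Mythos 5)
Your proof is correct and takes essentially the same route as the paper: both arguments pick the vertex of globally minimum $ID$, observe that it passes the Step~1 test in the first iteration and is therefore moved into $\mathcal{A}_1\cup\cdots\cup\mathcal{A}_n$, where it remains, giving $|\mathcal{V}_s|\geq 1$. Your additional verification that $\mathcal{V}_s$ is genuinely independent and that $\mathcal{N}(v)\subset\mathcal{V}_i^\delta$ makes the Step~1 test globally meaningful is left implicit in the paper but is consistent with it.
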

\begin{proof}
    Let $v$ be the vertex with the minimum $ID$. In the first iteration of the algorithm, $v$ is added to the set ${\cal{A}}_1 \cup \dots \cup {\cal{A}}_n$ and its neighbors are added to the set ${\cal{U}}_1 \cup \dots \cup {\cal{U}}_n$. In the subsequent iterations, the vertex $v$ always stays in the set ${\cal{A}}_1 \cup \dots \cup {\cal{A}}_n$. This implies that $|{\cal V}_s|\geq 1$.
\end{proof}

Based on the overlapping domain decomposition and global indicator $\textbf{GI}$, we can prove that Algorithm~\ref{PMIS} is parallel consistent, as shown in Theorem~\ref{consistency}.

\begin{theorem}[Parallel consistency]
    \label{consistency}
    The independent set ${\cal V}_s$ obtained by Algorithm~\ref{PMIS} is independent with respect to the decomposition ${\cal V}={\cal{V}}_1\cup\cdots\cup{\cal{V}}_n$ and the number of processors.
    %exhibits parallel consistency, that is considering the results for $n$ processors and $1$ processor, denoted as $({\cal{A}}_i,\ {\cal{U}}_i,\ {\cal{P}}_i)\ i = 1, \dots, n$ and $({\cal{A}},\ {\cal{U}},\  {\cal{P}})$ respectively, then ${\cal{A}}_1 \cup \dots \cup {\cal{A}}_n = {\cal{A}}$, ${\cal{U}}_1 \cup \dots \cup {\cal{U}}_n = {\cal{U}}$ and ${\cal{P}}_1 \cup \dots \cup {\cal{P}}_n = {\cal{P}}$.
\end{theorem}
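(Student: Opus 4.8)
The plan is to show that the output of Algorithm~\ref{PMIS} depends only on the graph $\hat{\cal F}=({\cal V},{\cal E})$ and the $ID$ assignment $ID(v)=\textbf{GI}(f)$, and not on how ${\cal V}$ is split among processors. Two observations drive the argument. First, by Property~\ref{idconsistency}, the value $\textbf{GI}(f)$ — and hence $ID(v)$ — is intrinsic to $f$, independent of the decomposition $\mathcal{F}=\cup_i\mathcal{F}_i$ and of $n$; moreover the $\textbf{GI}$ values are pairwise distinct (they enumerate ${\cal F}$ by SFC order), so there are no ties to break. Second, the overlap construction $\mathcal{V}_i^\delta=\mathcal{V}_i\cup_{v\in\mathcal{V}_i}{\cal N}(v)$, together with a sufficiently large $\delta$ (as fixed in Subsection~\ref{subsection: overlap}), guarantees that for every $v\in{\cal P}_i$ the \emph{entire} neighborhood ${\cal N}(v)$ is visible to processor $i$ inside ${\cal P}_i^\delta$. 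Therefore the local test in Step~1 — ``$ID(v)\le ID(\hat v)$ for all $\hat v\in{\cal N}(v)\cap{\cal P}_i^\delta$'' — is equivalent to the global test ``$ID(v)\le ID(\hat v)$ for all $\hat v\in{\cal N}(v)\cap{\cal P}$'', where ${\cal P}=\cup_j{\cal P}_j$ is the current global pending set.

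I would then prove, by induction on the iteration counter $its$, the invariant that after each iteration the global triple $({\cal A},{\cal U},{\cal P}):=(\cup_i{\cal A}_i,\cup_i{\cal U}_i,\cup_i{\cal P}_i)$ coincides with the state produced by the \emph{sequential} deterministic heuristic MIS of \citep{luby1985simple} run on $\hat{\cal F}$ with the same $ID$ order. The base case is the common initialization ${\cal A}=\emptyset$, ${\cal U}=\emptyset$, ${\cal P}={\cal V}$. For the inductive step: the set ${\cal T}:=\cup_i{\cal T}_i$ selected in Step~1 is, by the equivalence of local and global tests above, exactly the set of current local $ID$-minima of the graph induced on ${\cal P}$ — which is precisely the set the deterministic sequential algorithm promotes in one round; Step~2 moves ${\cal T}$ into ${\cal A}$ and all their ${\cal P}$-neighbors into ${\cal U}$, matching the sequential update; and Step~3 is a pure bookkeeping/communication step that redistributes vertices of ${\cal U}_i^\delta\setminus{\cal U}_i$ (and analogously the other overlap sets) to their owning processors without changing the global sets. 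Since the number of rounds needed to empty ${\cal P}$ and the final ${\cal A}$ are thus determined entirely by $(\hat{\cal F}, ID)$, and the termination rule ($its<M$ or ${\cal P}=\emptyset$) depends only on the global round count, the returned ${\cal V}_s={\cal A}_1\cup\cdots\cup{\cal A}_n={\cal A}$ is independent of the decomposition and of $n$. Combined with Lemma~\ref{lemma:PMIS}, ${\cal V}_s$ is moreover non-empty.

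The main obstacle is making the ``local test $=$ global test'' claim airtight, i.e.\ verifying that the chosen overlap width $\delta$ really does capture \emph{all} of ${\cal N}(v)$ for every owned vertex $v$, at every iteration. This requires relating the graph-neighborhood ${\cal N}(v)$ (defined by $\textbf{dist}(p_i,p_j)>h_i+h_j$ on advancing points) to a geometric ball of controlled radius, and then to a bounded number of background-mesh layers via the ratio of the SFC box size to the local scale $h$; one must also check this is preserved under the periodic repartitioning of Subsection~\ref{subsection: DD-SFC}, since ${\cal N}(v)$ is decomposition-invariant but the overlap sets ${\cal V}_i^\delta$ are not. A secondary point worth stating explicitly is that distinctness of the $\textbf{GI}$ values is essential: if two neighboring vertices could share an $ID$, Step~1 could either deadlock or admit both, and the equivalence with the deterministic sequential algorithm would fail — so I would record this as a consequence of the strict SFC ordering in Property~\ref{idconsistency} rather than leave it implicit.
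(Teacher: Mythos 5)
Your proposal is correct and follows essentially the same route as the paper's own proof: an induction on the iteration counter showing that the global triple $(\cup_i{\cal{A}}_i,\cup_i{\cal{U}}_i,\cup_i{\cal{P}}_i)$ matches the single-processor run, resting on Property~\ref{idconsistency} for $ID$ invariance and on the overlap identity ${\cal N}(v)\cap{\cal P}_i^{\delta}={\cal N}(v)\cap{\cal P}$. The two caveats you flag --- that the $\textbf{GI}$ values must be pairwise distinct and that $\delta$ must be verified large enough to capture all of ${\cal N}(v)$ at every iteration --- are left implicit in the paper's argument, so noting them is a refinement rather than a departure.
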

\begin{proof}
    We prove this theorem by shown that the independent sets ${\cal V}_s$ obtained by $n$ processors and $1$ processor are the same. Initially, ${\cal{P}}_1 \cup \dots \cup {\cal{P}}_n = {\cal{P}}= {\cal{V}}$. At the first iteration, for any vertex $ v \in {\cal{P}}_i \subset {\cal{P}}$, we have 
         \begin{equation}
         \label{theorem:eq1}
        {\cal N}(v) \cap {\cal P}_i^{\delta} = {\cal N}(v) \cap {\cal P}.
     \end{equation}
     According to Property~\ref{idconsistency}, the $ID$ of each vertex remains the same regardless of whether the algorithm is executed using $n$ processors or just $1$ processor. In other words, for any vertex $v$, we have 
     \begin{equation}
     ID(v, n) = ID(v, 1). 
     \end{equation}
     Then, the temporary set in step 1 satisfies
     \begin{equation}
        {\cal{T}}_{1} \cup \dots \cup {\cal{T}}_{n} = {\cal{T}},
     \end{equation}
     where the temporary set ${\cal{T}}$ is obtained by using $1$ processor.
     In step 2, before updating discarded set ${\cal{U}}_i$ and ${\cal{U}}$, we have ${\cal{A}}={\cal{A}}_{1} \cup \dots \cup {\cal{A}}_{n}$. Here ${\cal{A}}$ and ${\cal{U}}$ are obtained by using $1$ processor.
     For a vertex $v_t \in {\cal{T}}_i \subset {\cal{T}}$, similar to equation \eqref{theorem:eq1}, we obtain that
     \begin{equation}
     {\cal{N}}(v_t) \cap ({{\cal{A}}_{i}^{\delta}} \cup {{\cal{P}}_{i}^{\delta}}) = {\cal{N}}(v_t) \cap ({\cal{A}} \cup {\cal{P}}),
     \end{equation}
     which implies ${\cal{U}}={\cal{U}}^\delta_{1} \cup \dots \cup {\cal{U}}^\delta_{n}$. After completing communication in step 3, we get that
     \begin{equation}
     \label{theorem:eq2}
        {\cal{A}}_{1} \cup \dots \cup {\cal{A}}_{n} = {\cal{A}},
     \end{equation}
     and
     \begin{equation}
     \label{theorem:eq3}
        {\cal{U}}_{1} \cup \dots \cup {\cal{U}}_{n} = {\cal{U}}.
     \end{equation}
     Assume that equation \eqref{theorem:eq2} and equation \eqref{theorem:eq3} remain constant after $its$ iterations. Now, we will show that these equations hold at the ${(its+1)}_{th}$ iteration. According to equation \eqref{theorem:eq2} and equation \eqref{theorem:eq3}, we have ${\cal{P}}_1 \cup \dots \cup {\cal{P}}_n = {\cal{P}}$. By repeating the proof of the first iteration, one can prove equation \eqref{theorem:eq2} and equation \eqref{theorem:eq3} remain constant after $its+1$ iterations. By induction, we have that the independent sets ${\cal V}_s$ obtained by $n$ processors and $1$ processor are the same, which completes the proof of this theorem.
\end{proof}

\subsection{Algorithm workflow and parallel mesh quality enhancement}
\label{subsection: workflow}
In this subsection, we summarize the workflow of the newly proposed CPAFT algorithm in Algorithm~\ref{workflow} and analyze its convergence. A parallel mesh quality enhancement approach is also introduced to improve the quality of the newly generated mesh.

\begin{algorithm} [H]
\caption{The consistent parallel advancing front technique (CPAFT).}
\label{workflow}
\begin{algorithmic}
\STATE{\textbf{Initialization}: A maximum scale $h_M$ and minimum scale $h_m$ and a reasonable initial front set $\mathcal{F}$ which is consistent with scale $h_M$ and $h_m$.}
\WHILE{$true$}
\STATE {\textbf{1.} According to criteria \textbf{C1-3}, construct subdomains $\Omega_h^i$, $\Omega_h^{i,\delta}$ and subsets ${\mathcal{F}}_{i}$, ${\mathcal{F}}_{i}^\delta$ by using the domain decomposition method proposed in Subsection~\ref{subsection: DD-SFC} for the current front set $\mathcal{F}$.}
\FOR{$\forall f$ in $\mathcal{F}_{i}$ }
\STATE {\textbf{2.} Find the optimal point $p$ (if exists) from the potential advancing points on the perpendicular line of front $f$ and the fronts in $Neighbor(f)$ according to \textbf{criterion A} and \textbf{criterion B}. Generate a new element $(f,p)$ and add it into vertex set $\cal V$.}
\ENDFOR
\STATE {\textbf{3.} Build the graph $\hat{\cal F}:=({\cal{V}}, {\cal{E}})$ and apply Algorithm~\ref{PMIS} to obtain ${\cal{A}}_i,\ {\cal{U}}_i,\ {\cal{P}}_i$. }
\STATE {\textbf{4.} Add elements in ${\cal{A}}_i$ into $\mathbb{E}_{h}^{i}$ and update front sets ${\cal{F}}$ and ${\cal{F}}_{1}, \dots, {\cal{F}}_{n}$ correspondingly.}
\STATE {\textbf{5.} If $|{\cal{F}}_{1}\cup \dots \cup {\cal{F}}_{n}|>0$ and $ |{\cal V}_s| > 0$, return step \textbf{1}; otherwise, break.}
\ENDWHILE
%\ENDFOR
\RETURN {All generated elements which are represent in $n$ subsets $\mathbb{E}_{h}^{i}, \  i = 1, \dots, n$.}
\end{algorithmic}
\end{algorithm}

According to Theorem~\ref{consistency}, the CPAFT algorithm is parallel consistent. The following Theorem~\ref{thm:finiteConvergence} shows that CPAFT can terminate after a finite number of iterations.

\begin{theorem} \label{thm:finiteConvergence}
Algorithm~\ref{workflow} terminates after a finite number of iterations, which is less than $\lceil V_0 / (\epsilon \eta h_m^{d}) \rceil$.
\end{theorem}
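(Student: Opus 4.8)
The plan is to track a strictly decreasing, bounded-below integer-valued (or rationally quantized) quantity across the outer \textbf{while} loop of Algorithm~\ref{workflow}, namely the volume (area, for $d=2$) of the uncovered region $\Omega_h \setminus \bigcup_{e \in \mathbb{E}_h} e$. Denote the initial volume $V_0 := |\Omega_h|$. First I would observe that by Lemma~\ref{lemma:PMIS}, whenever the loop does not break at step \textbf{5}, the independent set ${\cal V}_s = {\cal A}_1 \cup \cdots \cup {\cal A}_n$ returned by Algorithm~\ref{PMIS} is non-empty, so at least one new element $(f,p)$ is added to some $\mathbb{E}_h^i$ in step \textbf{4}. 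Each such element is a genuine triangle/tetrahedron admitted by \textbf{Criterion A} or \textbf{Criterion B}, hence its measure is at least $\epsilon\, \eta\, h_m^{d}$: indeed, the newly generated face has measure at least $\epsilon h^{d-1} \geq \epsilon h_m^{d-1}$ (this is the stated role of $\eta$, giving the lower bound $\epsilon h^{d-1}$ on the face), and the perpendicular distance from $p$ to $f$ is at least $\eta h \geq \eta h_m$, so the element's volume is at least $\tfrac{1}{d}\epsilon h_m^{d-1}\cdot \eta h_m \geq \epsilon \eta h_m^{d}$ up to the dimensional constant — I would absorb the $1/d$ into the constant or simply use the cruder bound $\epsilon\eta h_m^d$ as the paper's statement does.

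Next I would argue that these newly added elements are pairwise disjoint and disjoint from all previously generated elements, so the total covered volume genuinely increases by at least $|{\cal V}_s| \cdot \epsilon\eta h_m^d \geq \epsilon \eta h_m^d$ per iteration. Disjointness among the elements of ${\cal V}_s$ is exactly the point of the MIS construction combined with the remark following the definition of $\hat{\cal F}$: the edge criterion $\textbf{dist}(p_i,p_j) > h_i + h_j$ (absent an edge) is a sufficient condition for non-intersection, so an independent set yields mutually non-intersecting advancements; disjointness from existing elements is guaranteed by the intersection clauses in \textbf{Criterion A}/\textbf{B} together with the distributed forest-of-overlapping-trees approach of Subsection~\ref{subsection: overlap}, which ensures all relevant neighbors (across subdomain boundaries) are seen. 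Consequently, after $k$ iterations the uncovered volume is at most $V_0 - k\,\epsilon\eta h_m^d$. Since this quantity cannot become negative, the loop must terminate once $k \geq V_0/(\epsilon\eta h_m^d)$, i.e. after at most $\lceil V_0/(\epsilon\eta h_m^d)\rceil$ iterations.

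I expect the main obstacle to be the disjointness/measure bookkeeping rather than the counting itself: one must be careful that the "volume added" is really additive, which requires (i) that no newly generated element overlaps the already-meshed region — this leans on the intersection tests and on the overlapping-tree machinery actually capturing every nearby front, and (ii) that the two elements produced from two fronts in ${\cal V}_s$ do not overlap each other even though the graph condition is only sufficient, not necessary. A clean way to handle (ii) is to invoke directly the cited guarantee (refs.~\citep{chernikov2006parallel, zhou2022saft}) that absence of an edge $\Rightarrow$ non-intersection of the two new elements, so an independent set is automatically a set of mutually disjoint new elements. For (i) I would simply cite the intersection requirements in \textbf{Criterion A}/\textbf{B} ("the newly generated elements should not contain any other fronts" and "should not intersect with other nearby fronts"), noting that any element overlapping the meshed region would have to cross a current front. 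With these two points granted, the volume decreases by a fixed positive amount each iteration and the bound follows immediately. A minor remark I would add: the last iteration may add elements of total volume less than $\epsilon\eta h_m^d$ only if it is the terminating one, so the strict bound $\lceil V_0/(\epsilon\eta h_m^d)\rceil$ is safe.
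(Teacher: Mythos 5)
Your proposal is correct and follows essentially the same route as the paper: both arguments rest on Lemma~\ref{lemma:PMIS} to guarantee at least one accepted advancement per non-terminating iteration, and on the criteria of Section~\ref{section: preliminaries} to bound each new element's volume below by $\epsilon\eta h_m^d$, so that the uncovered volume decreases by a fixed positive amount each pass (the paper phrases this as a contradiction via $V_{M+1}<0$, you phrase it directly, which is immaterial). Your side remark about the dimensional factor $1/d$ in the cone-volume bound is a fair observation --- the paper silently ignores it too --- but it only rescales the constant and does not affect finiteness.
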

\begin{proof}
    After the $k_{th}$ iteration, let $V_k$ be the volume of the polyhedron enclosed by current $\mathcal{F}$.
    Then $\{ V_k \}$ is a monotone sequence, i.e.,
    \begin{equation}
        |\Omega_h|=V_0 \geq \cdots \geq V_{k-1} \geq V_{k} \cdots.
    \end{equation}
    We denote $\lceil V_0 / (\epsilon \eta h_m^{d}) \rceil$ as $M$, in which $(\epsilon,\eta,h_m)$ are constants mentioned in Section~\ref{section: preliminaries}. If Algorithm~\ref{workflow} does not terminate after $M$ iterations, then we have $|{\cal{F}}_{1}\cup \dots \cup {\cal{F}}_{n}|>0$ and $ |{\cal V}_s| > 0$ at the $k_{th}$ iteration with $k = 0, \dots, M$. Based on Lemma~\ref{lemma:PMIS} and the criteria mentioned in Section~\ref{section: preliminaries}, at the $k_{th}$ iteration, there exists at least one element in $ {\cal V}_s$, whose volume is not less than $\epsilon \eta h_m^{d}$.  Then the $V_k$ is a strictly monotone decreasing sequence with 
    \begin{equation}
        V_{k} - V_{k+1} \geq \epsilon \eta h_m^{d}>0,\ \ k = 0, \dots, M. 
        \label{bound}
    \end{equation}
    By summing up equation \eqref{bound} with $k = 0, \dots, M$, we get that 
    \begin{equation}
     V_{0} - V_{M+1} \geq (M+1) (\epsilon \eta h_m^{d}) > V_{0}.
     \label{contradiction}
    \end{equation}
    Then, equation \eqref{contradiction} indicates that $V_{M+1} < 0$, which is contradictive with the non-negativity of volume. Therefore, Algorithm~\ref{workflow} should stop after a finite number of iterations, which is less than $\lceil V_0 / (\epsilon \eta h_m^{d}) \rceil$.
\end{proof}

The mesh generation process is completed by Algorithm~\ref{workflow} if it terminates with $|{\cal{F}}_{1}\cup \dots \cup {\cal{F}}_{n}|=0$ and $|{\cal V}_s| = 0$. However, in some cases,   Algorithm~\ref{workflow} may terminate with $|{\cal{F}}_{1}\cup \dots \cup {\cal{F}}_{n}|>0$ and $|{\cal V}_s| = 0$. In this situation, the mesh generation process is not completed because the polyhedron enclosed by remaining front set ${\cal F}$ is not empty. This usually occurs for 3D complex models. In particular, the polyhedron enclosed by remaining front set ${\cal F}$ may be too small, making it impossible to find an optimal advancing point $p$ for any $f$ that satisfies \textbf{criterion A} and \textbf{criterion B}. As a result, the vertex set $\cal V$ is empty, and consequently, the independent set ${\cal V}_s$ is also empty. To complete the mesh generation process, the remaining fronts in ${\cal F}$ can be handled by merging close points. We start by selecting the remaining front $f_m$ with minimum $ID$ and merge all of the vertices in $Neighbor(f_m)$. Then, we return to step \textbf{1} of Algorithm~\ref{workflow}. This procedure is repeated alternately until $|{\cal{F}}_{1}\cup \dots \cup {\cal{F}}_{n}|=0$.

After generating the unstructured meshes by CPAFT, it is necessary to further enhance the mesh quality. In this work, we employ the surface preserving Laplacian smoothing approach introduced in Section~\ref{section: preliminaries} with the domain decomposition method proposed in Subsection~\ref{subsection: DD-SFC} to conduct parallel mesh quality enhancement. For each elements subset $\mathbb{E}_{h}^{i},$ with $ \  i = 1, \dots, n$, we extend it to a large subsets $\mathbb{E}_{h}^{i,\delta}$ based on the overlapping domain decomposition $\Omega_{h}^{i, \delta}$. According to equation \eqref{equ: smooth}, the position of a point $p_i$ is updated by using the positions of all its neighboring nodes. Therefore, with this extension, we can run the surface preserving Laplacian smoothing approach in parallel. An example is given in Figure~\ref{fig: smoothing} to show the parallel Laplacian smoothing for a point on the boundary of two subdomains.

\begin{figure}[H]
\centering
\includegraphics[width=1.0\linewidth]{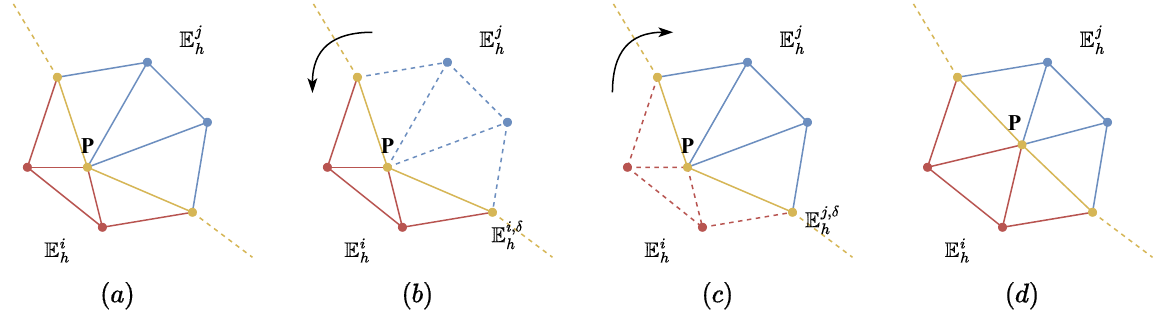}
\caption{
The parallel Laplacian smoothing for a point on the boundary of two subdomains: (a) the initial mesh, and (d) the final smoothed mesh. The red and the blue parts represent the meshes of two different subdomains, respectively. The yellow lines represent the interface between the two subdomains. In (b) and (c), the elements with dashed lines are added to the extension subset via communication.
}
\label{fig: smoothing}
\end{figure}

{\color{red}
Laplacian smoothing usually produces meshes with acceptable quality for 2D models and 3D models with simple structures. However, its effectiveness diminishes for complex 3D models, where mesh quality improvements are often insufficient. To achieve better results, it is crucial to optimize the adjacency relationships through operations such as edge splits, edge collapses, and edge flips. In this work, we implement a parallel local remeshing algorithm, based on the computational geometry algorithms library (CGAL) \citep{fabri2009cgal}. Our implementation allows each processor to independently identify and remove elements with a mesh quality indicator $\alpha < 0.3$ within its respective set $\mathbb{E}_{h}^{i}$, along with a small set of surrounding elements. The ``remeshing" function in CGAL is then used to re-generate the meshes, which are subsequently reintegrated into the subset $\mathbb{E}_{h}^{i}$. Repeating this process 1-3 times significantly enhances mesh quality.  
}

After generating the unstructured mesh using Algorithm~\ref{workflow} and applying parallel mesh quality enhancement approach, we obtain $n$ disjoint subsets $\mathbb{E}_{h}^{i}$, where $i = 1, \dots, n$. These subsets solve the problem stated in Prob.~\ref{prob} in parallel. Simultaneously, all elements are partitioned into $n$ disjoint subsets, which can be considered as the result of mesh partitioning. As a result, the newly proposed Algorithm~\ref{workflow} can be executed concurrently with a parallel numerical PDE solver, making it more convenient for computer aided engineering or computational fluid dynamics simulations.

\

\section{Experiment results}
\label{sec: validation}
To examine the performance of the newly proposed CPAFT algorithm, we carry out numerical experiments on a series of 2D and 3D problems. All simulations are carried out on a supercomputer with multi-nodes, each equipped with two AMD EPYC 7452 32-Core CPUs and 256GB of local memory. The nodes are interconnected via an Infiniband high-performance network. In all tests, the surface meshes of all geometry models are segmented with Gmsh \citep{geuzaine2009gmsh}. We mainly focus on: (1) the parallel consistency, (2) the mesh quality, (3) the parallel scalability, {\color{red} and (4) comparisons with three open-source software tools: Gmsh \citep{geuzaine2009gmsh}, NETGEN \citep{schoberl1997netgen}, and TetGen \citep{hang2015tetgen}}.

\ 

\subsection{Triangular mesh generation in 2D}
Firstly, we demonstrate the iteration of the proposed algorithm using a 2D gear with an inner radius of 0.8, an outer radius of 1.8, 12 teeth with radius $r=2.0$, and angle $\theta = \pi/20$. We show the geometry of the gear and display the mesh generation procedure of the CPAFT algorithm with 4 processors in Figure~\ref{fig: gear-op}. From the figure we can see that CPAFT can efficiently obtain a high quality triangular mesh after 27 iterations for this problem. Without mesh quality enhancement, all mesh quality indicators $\alpha$ for this problem are above $0.8$.

\ 

\begin{figure}[H]
		\centering
		\subfigure[Geometry]{
		\begin{minipage}[c]{0.312\textwidth}
		  \centering
            \includegraphics[width=0.875\linewidth]{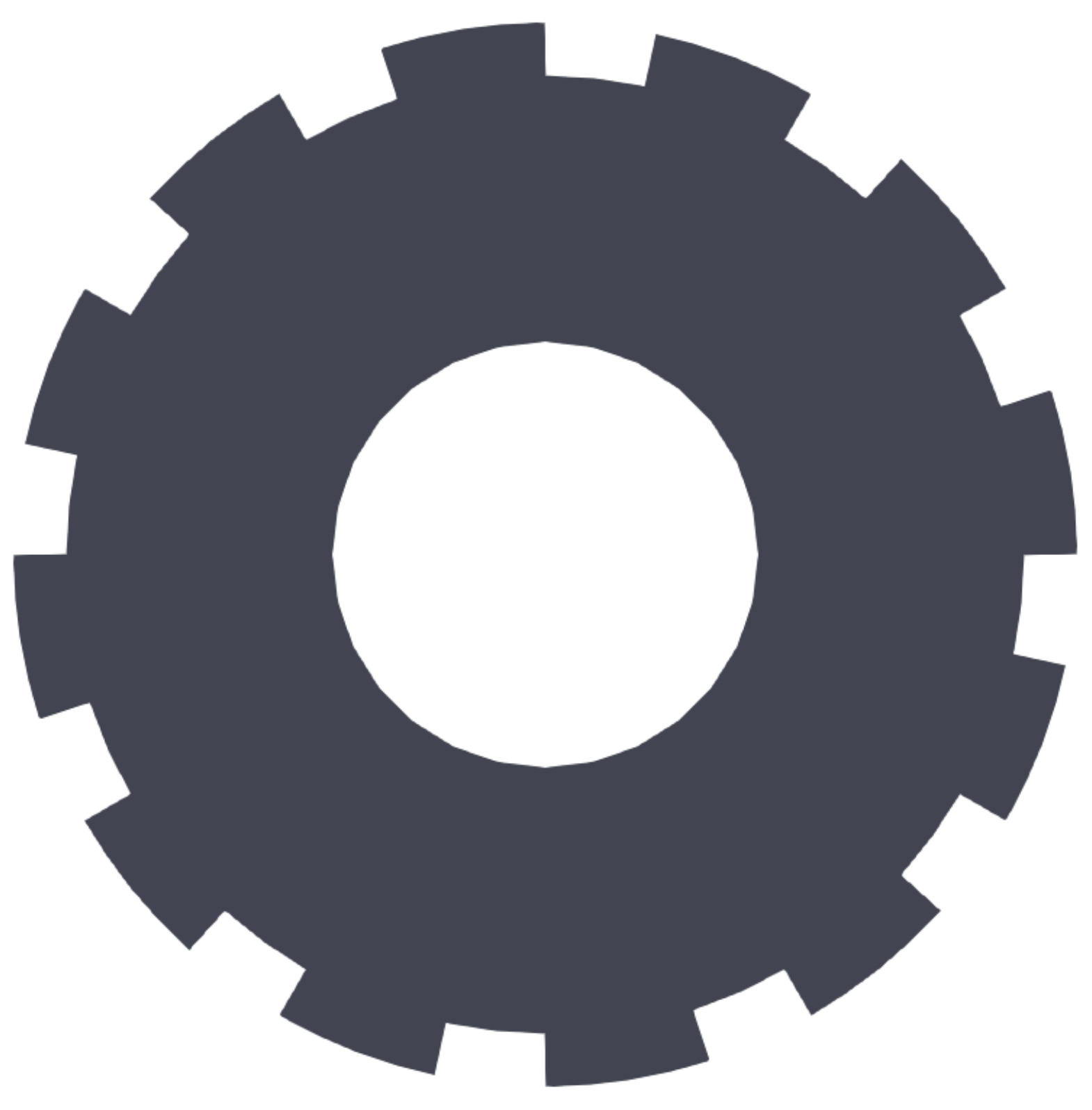}
		\end{minipage}
		} \subfigure[Step 5]{
		\begin{minipage}[c]{0.312\textwidth}
		  \centering
            \includegraphics[width=0.875\linewidth]{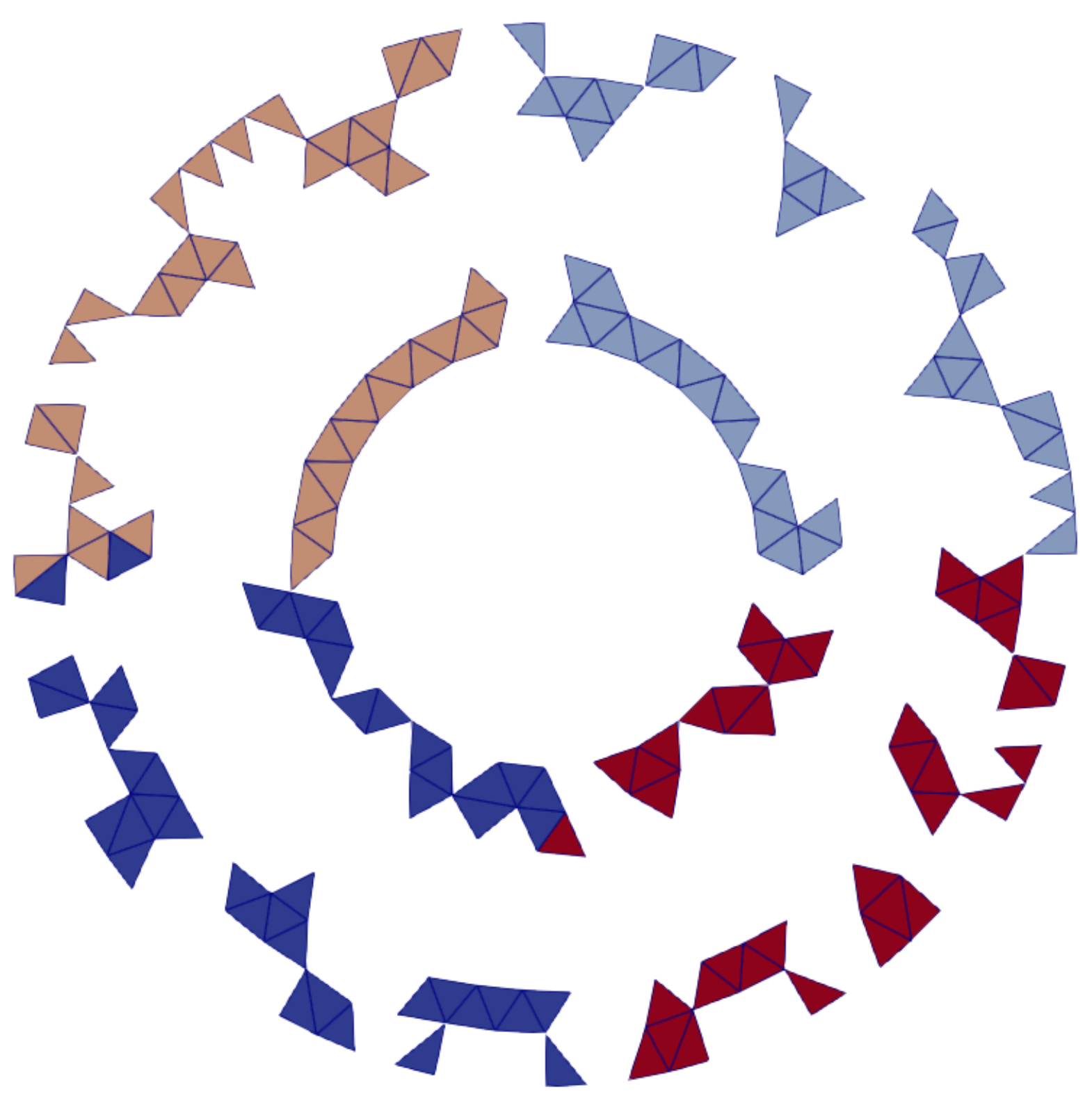}
		\end{minipage}
		} \subfigure[Step 10]{
		\begin{minipage}[c]{0.312\textwidth}
		  \centering
            \includegraphics[width=0.875\linewidth]{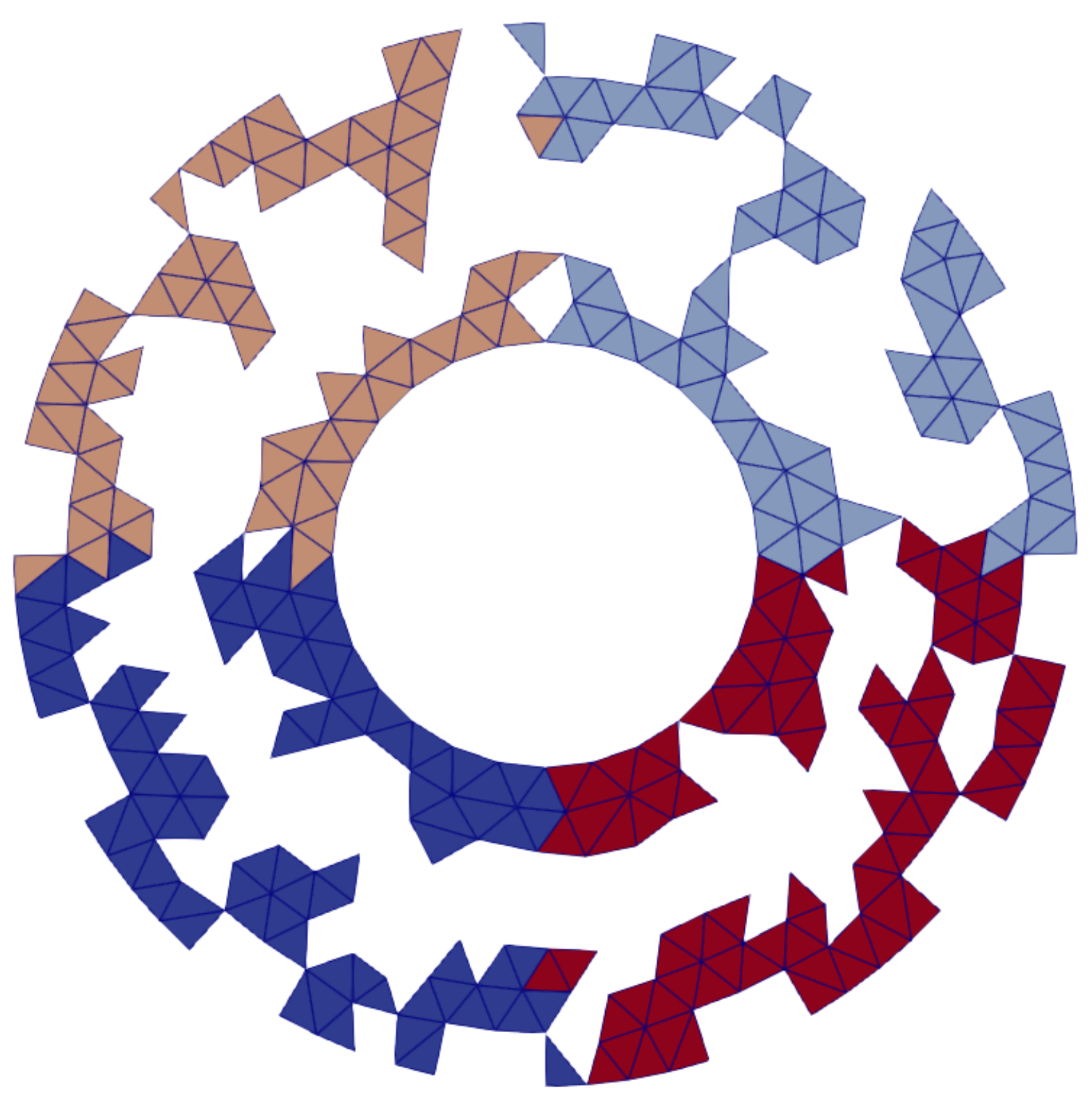}
		\end{minipage}
		} 
  
        \subfigure[Step 15]{
		\begin{minipage}[c]{0.312\textwidth}
		  \centering
            \includegraphics[width=0.875\linewidth]{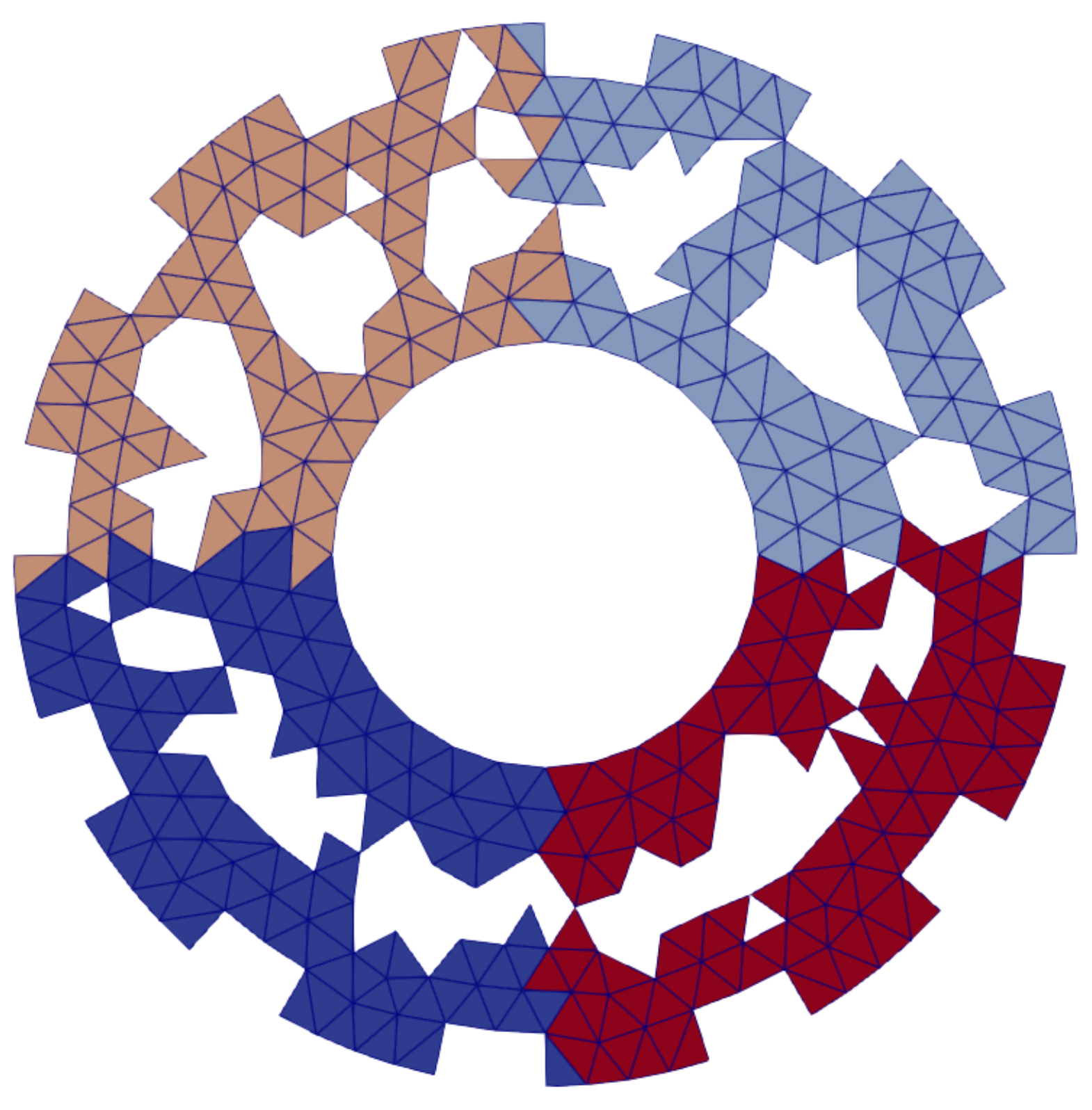}
		\end{minipage}
		} \subfigure[Step 20]{
		\begin{minipage}[c]{0.312\textwidth}
		  \centering
            \includegraphics[width=0.875\linewidth]{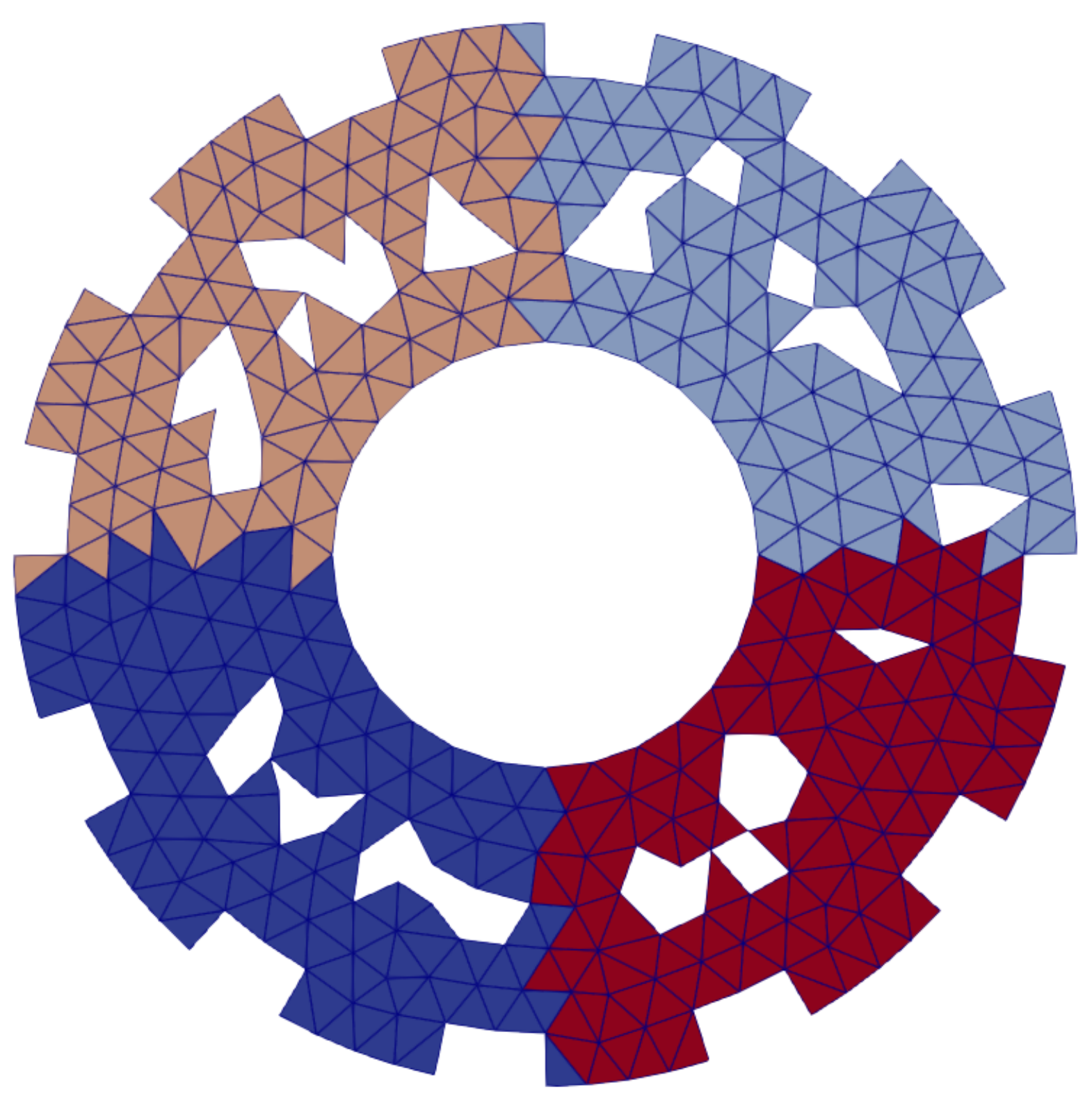}
		\end{minipage}
		} \subfigure[Step 27]{
		\begin{minipage}[c]{0.312\textwidth}
		  \centering
            \includegraphics[width=0.875\linewidth]{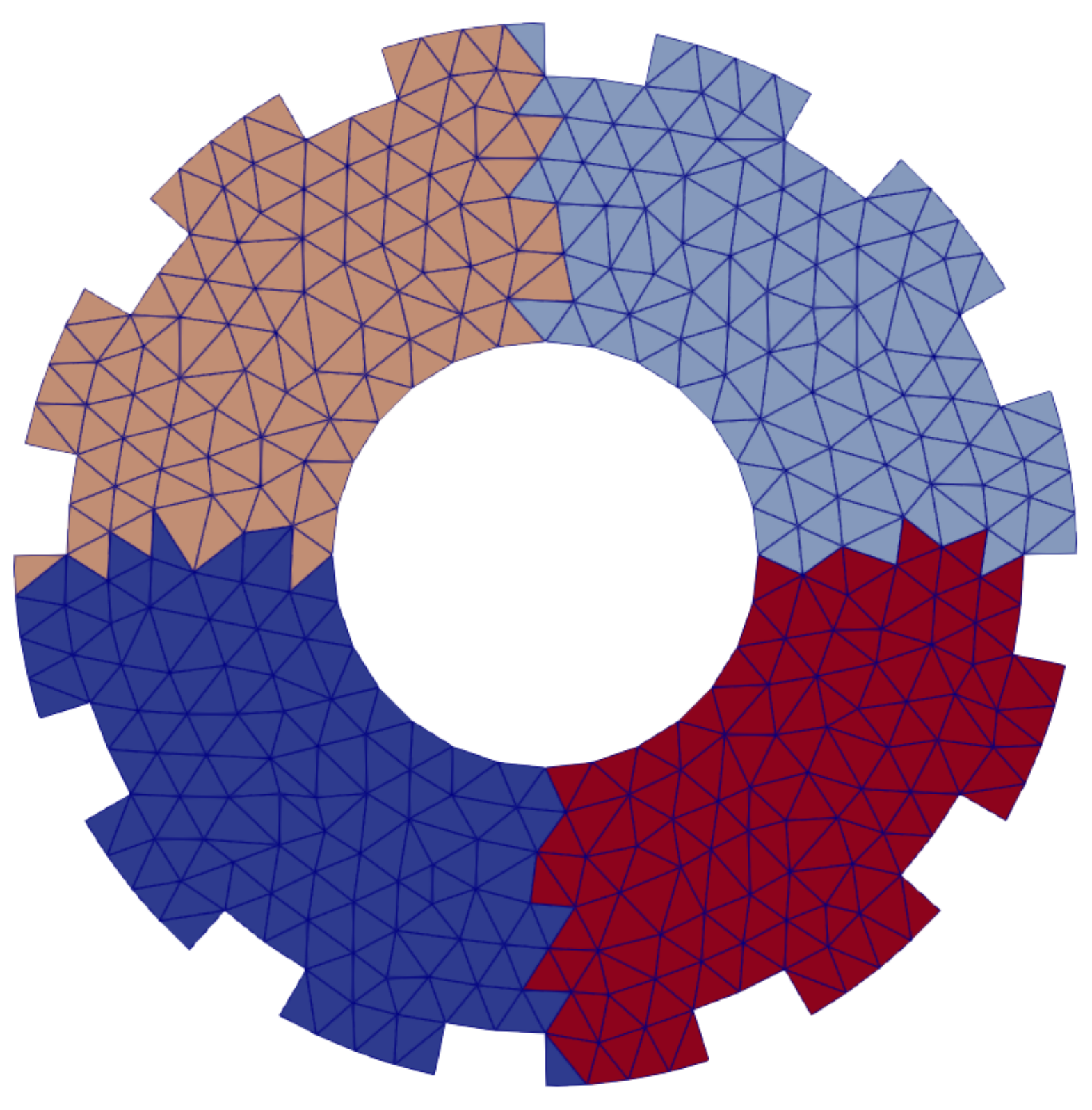}
		\end{minipage}
		}
		\caption{A 2D gear model. The CPAFT algorithm convergences in 27 iterations. Element counts per processor are 158, 153, 153, and 152, respectively.}
		\label{fig: gear-op}
\end{figure}

\ 

{\color{red}
The second test case is the NACA0012 airfoil \citep{jacobs1937airfoil}, a benchmark model commonly used to validate mesh generation algorithms. The primary challenge with this model lies in the potential non-uniformity of mesh scale from the inner to the outer boundary. We employ the CPAFT algorithm combined with Laplacian smoothing to generate meshes for this model, using both a uniform local scale $h$ and varying local scales which decrease from outer to inner boundary. 
All tests in this experiment are conducted using $np=1$ processor and $4$ processors. The output meshes and their mesh quality for a uniform local scale $h$ are displayed in Figure~\ref{fig: naca0012-iso}, which indicates the parallel consistency of the CPAFT algorithm with Laplacian smoothing and the high quality of the generated meshes. Similarly, as illustrated in Figure~\ref{fig: naca0012-aniso}, the CPAFT algorithm with Laplacian smoothing also achieves the parallel consistency and produces high quality meshes when varying local scales are applied.
}

\ 

\begin{figure}[H]
		\centering
		\subfigure[$np=1$]{
		\begin{minipage}[c]{0.312\textwidth}
		  \centering
            \includegraphics[height=0.95\textwidth]{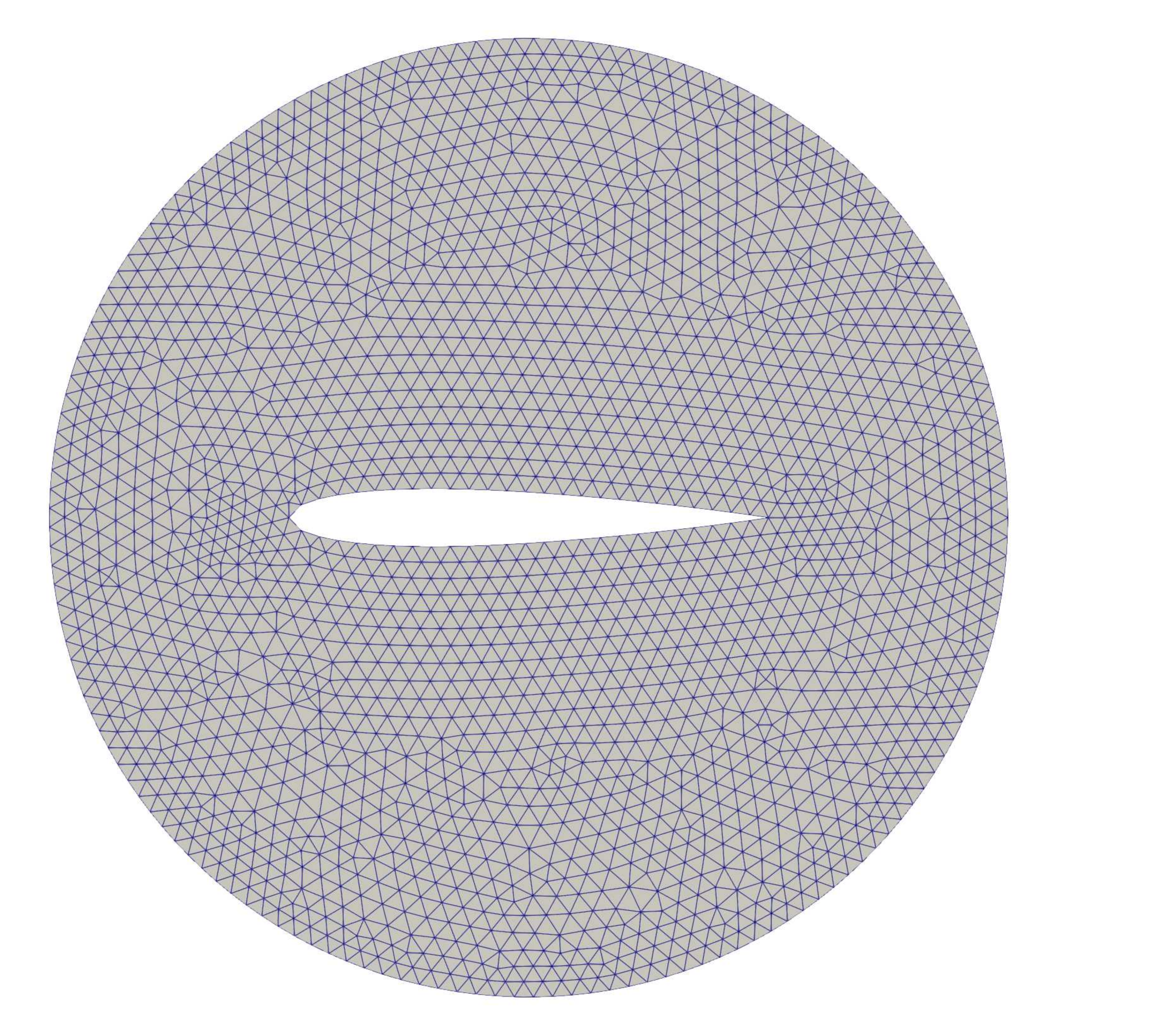}
		\end{minipage}
		} \subfigure[$np=4$]{
		\begin{minipage}[c]{0.312\textwidth}
		  \centering
            \includegraphics[height=0.95\textwidth]{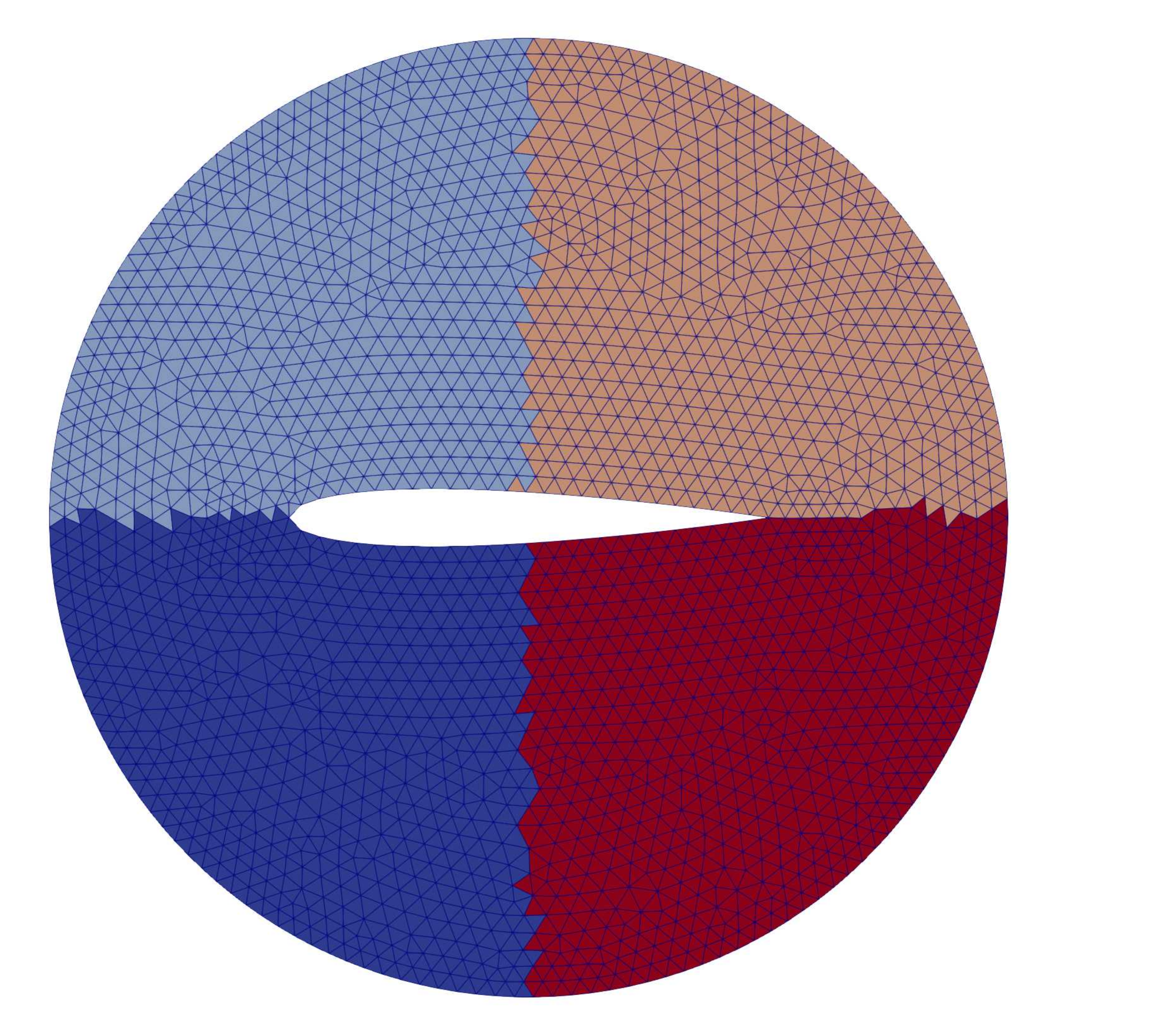}
		\end{minipage}
		} \subfigure[mesh quality]{
		\begin{minipage}[c]{0.312\textwidth}
		  \centering
            \includegraphics[height=0.95\textwidth]{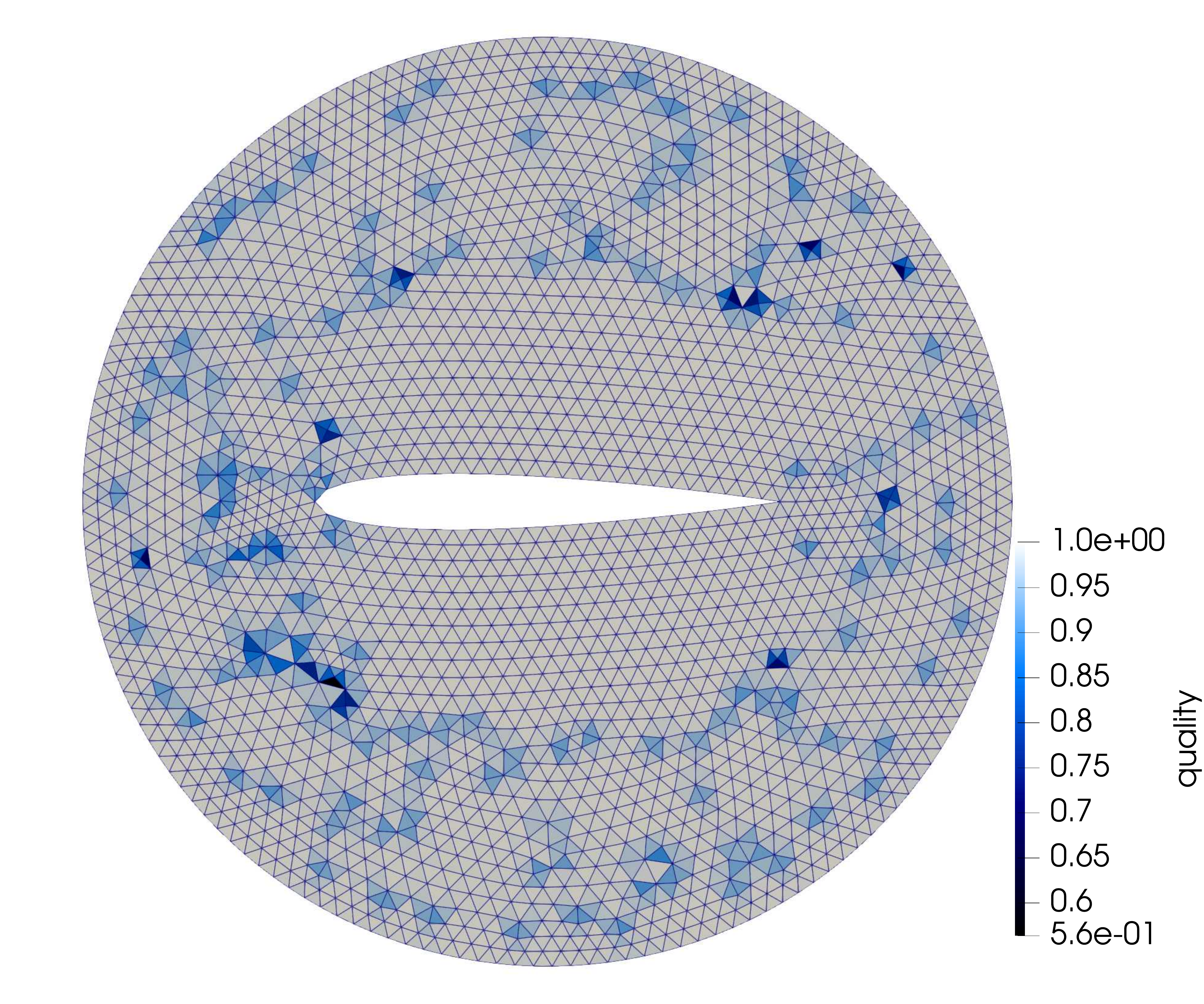}
		\end{minipage}
		} 
		\caption{\color{red} The CPAFT algorithm with Laplacian smoothing and a uniform local scale being $h=0.04$. All simulations start with 209 initial fronts and result in a total of 4,177 elements. The generated mesh with quality $\alpha\geq0.56$.}
		\label{fig: naca0012-iso}
\end{figure}

\ 

\begin{figure}[H]
		\centering
		\subfigure[$np=1$]{
		\begin{minipage}[c]{0.312\textwidth}
		  \centering
            \includegraphics[height=0.95\textwidth]{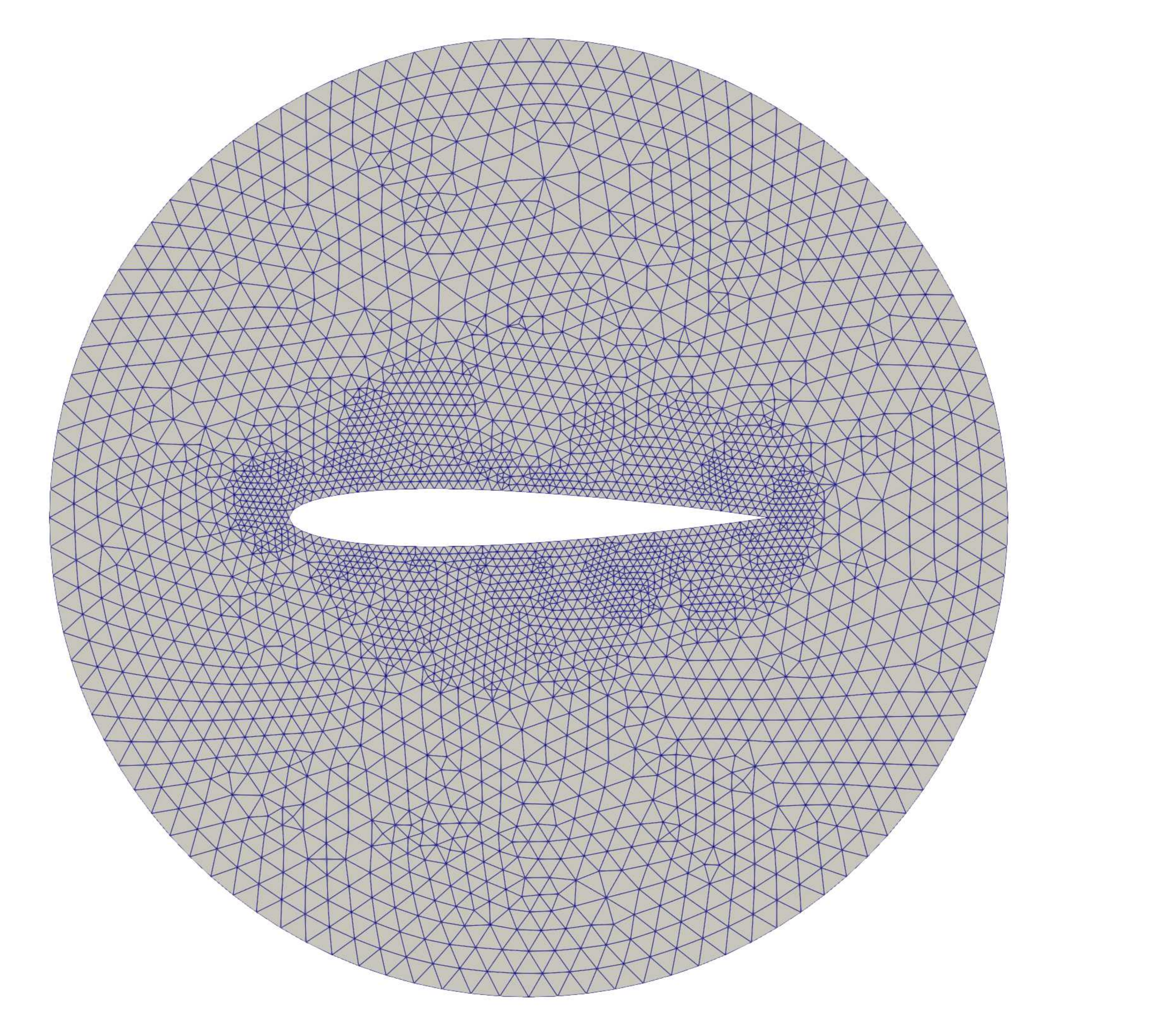}
		\end{minipage}
		} \subfigure[$np=4$]{
		\begin{minipage}[c]{0.312\textwidth}
		  \centering
            \includegraphics[height=0.95\textwidth]{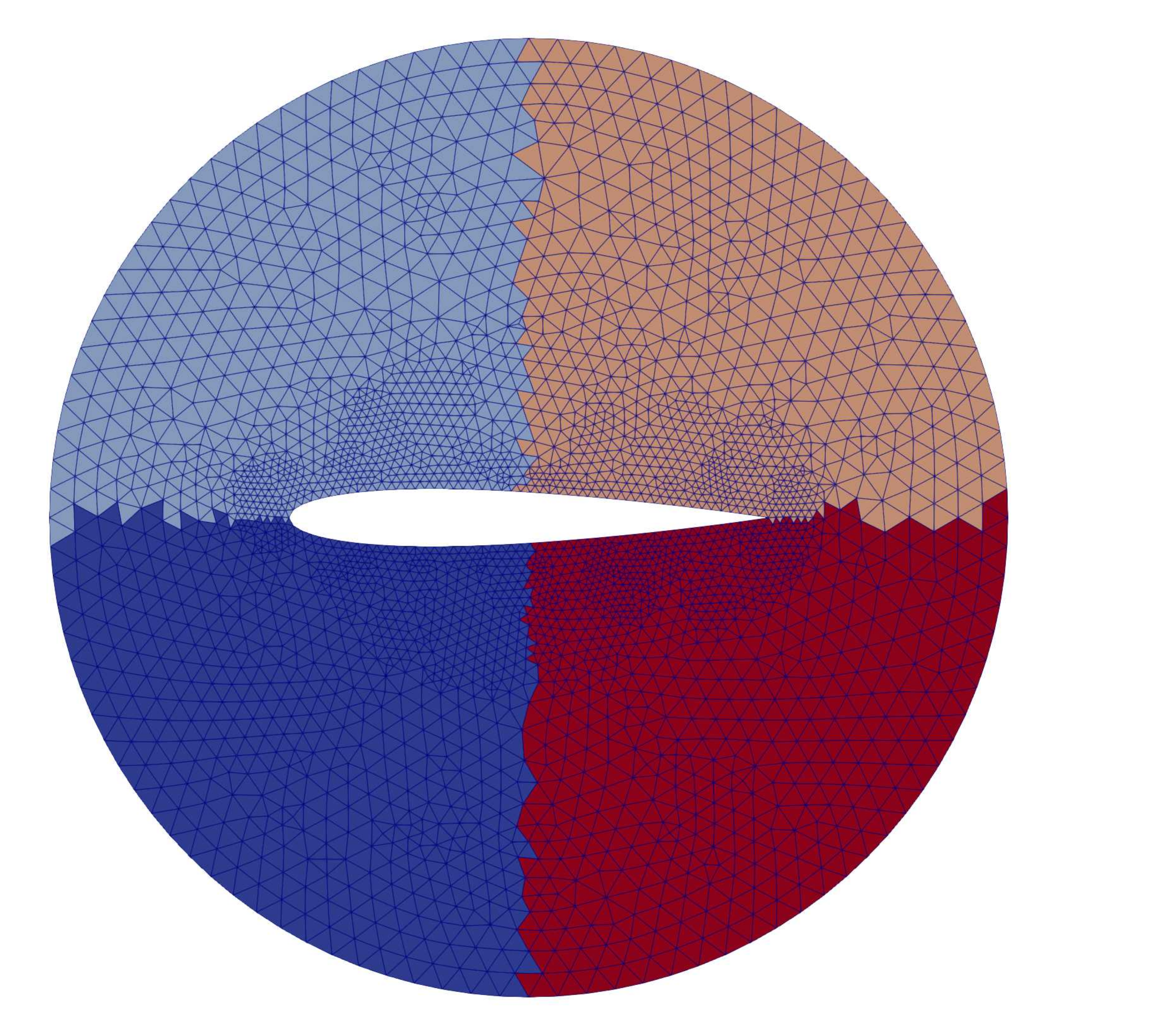}
		\end{minipage}
		} \subfigure[mesh quality]{
		\begin{minipage}[c]{0.312\textwidth}
		  \centering
            \includegraphics[height=0.95\textwidth]{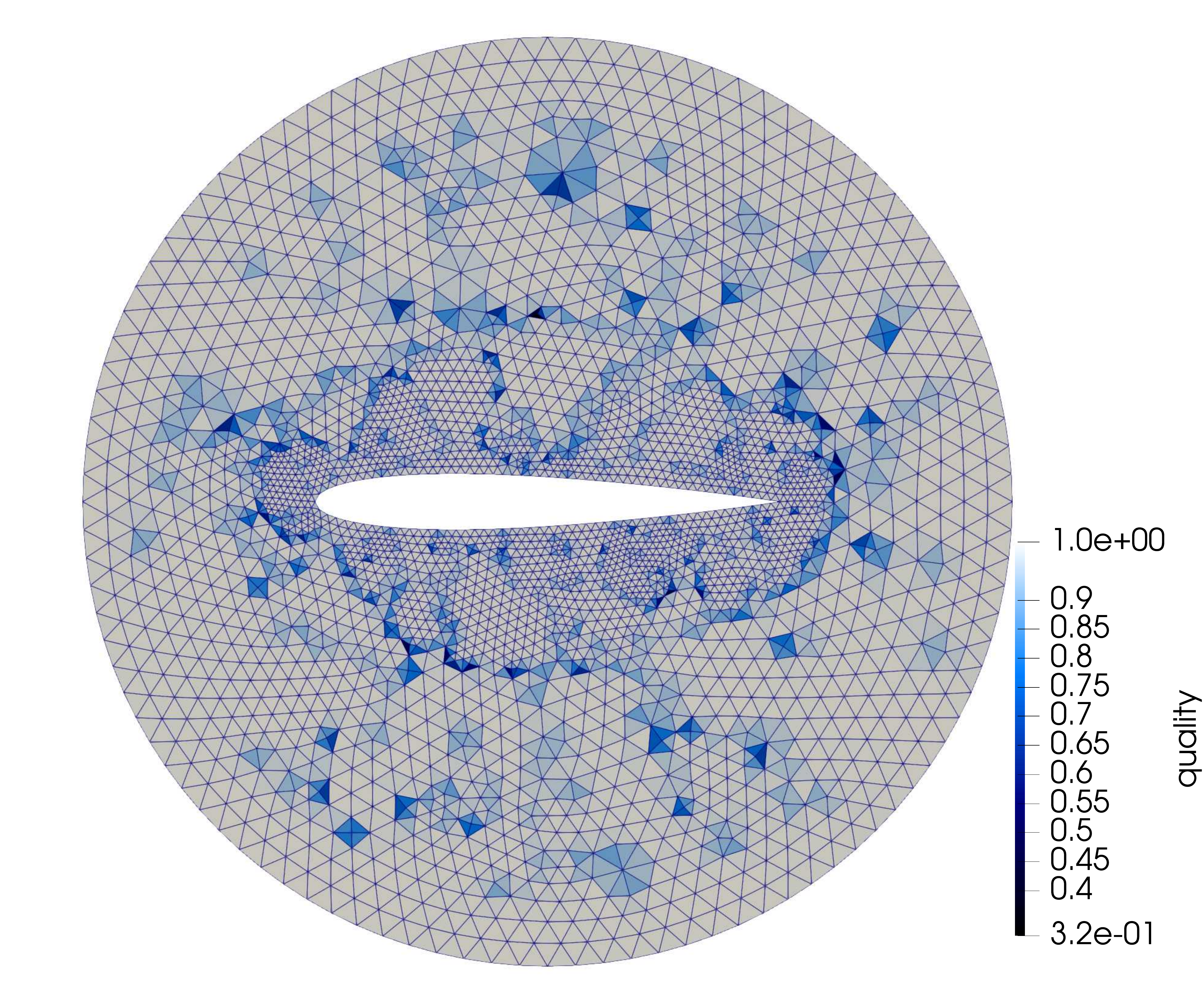}
		\end{minipage}
		} 
		\caption{\color{red} The CPAFT algorithm with Laplacian smoothing. The local scales are 0.02 at the inner boundary and 0.06 at the outer boundary. All simulations start with 206 initial fronts and result in a total of 5,452 elements. The generated mesh with quality $\alpha\geq0.32$.}
		\label{fig: naca0012-aniso}
\end{figure}
% By comparing Figure~\ref{fig: naca0012-iso} and Figure~\ref{fig: naca0012-aniso}, we can observe that setting a uniform scale makes it easier for the proposed algorithm to converge to a high-quality mesh. In contrast, different inner and outer scales can lead to less smooth changes in the regions where the inner and outer fronts meet, slightly reducing mesh quality. However, in general, an acceptable mesh can still be obtained, with the lowest mesh quality in this case being higher than 0.3.

% \ 
{\color{red}
The final 2D test case we present is a concave facial model \citep{gmsh2ddemo}, which serves as a benchmark for evaluating the robustness of mesh generation algorithms in concave domains. We employ the CPAFT algorithm, incorporating Laplacian smoothing, to generate meshes for this model using varying local scales, which increase from left to right boundary. Figure~\ref{fig: face} shows the meshes generated by the CPAFT algorithm using both a single processor ($np = 1$)  and $4$ processors, illustrating the parallel consistency of the algorithm. As shown in Figure~\ref{fig: face}-(c), the generated mesh is of high quality, with $\alpha \geq 0.45$.
}

\begin{figure}[H]
		\centering
		\subfigure[$np=1$]{
		\begin{minipage}[c]{0.312\textwidth}
		  \centering
            \includegraphics[height=1.25\textwidth]{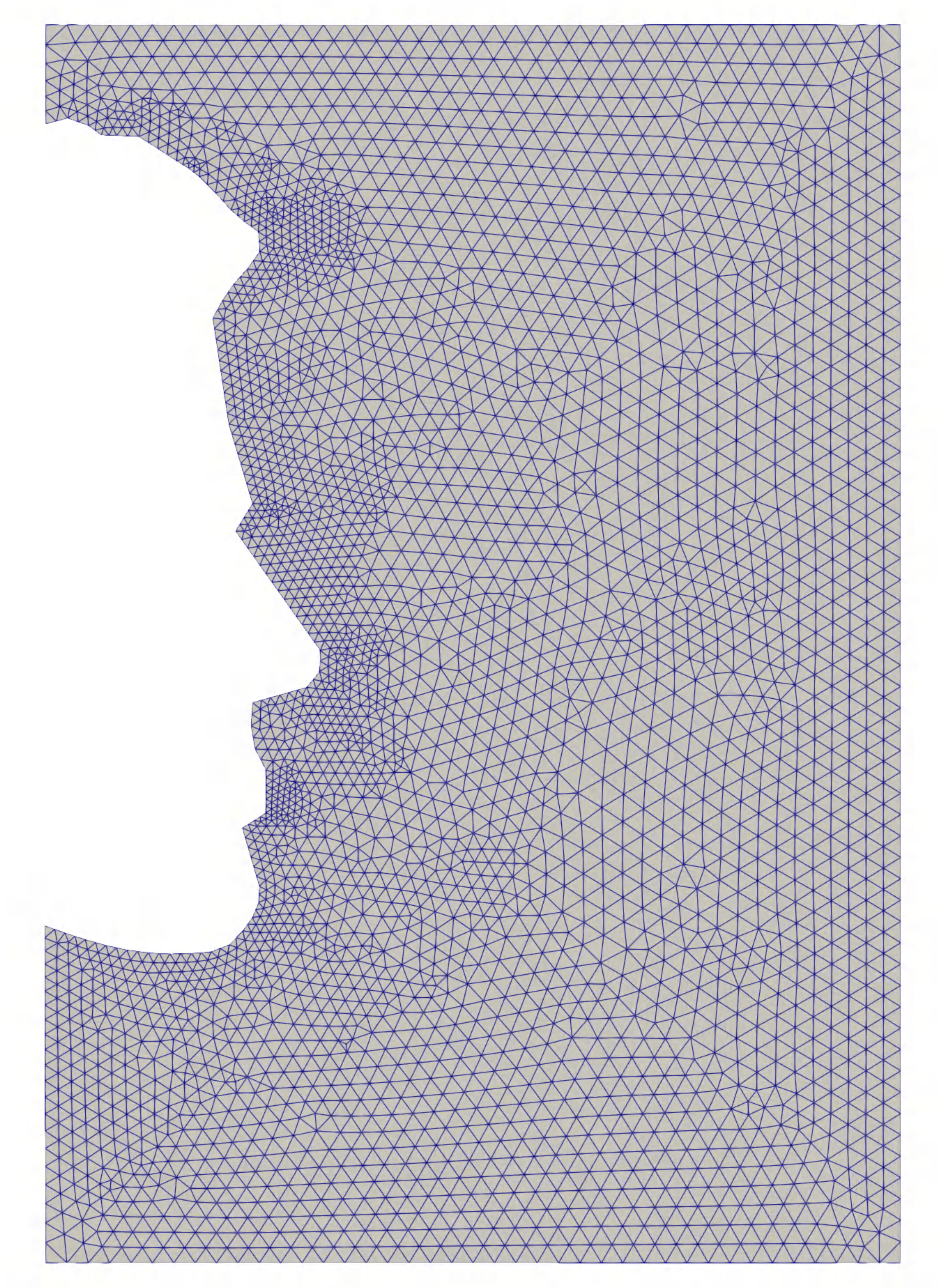}
		\end{minipage}
		} \subfigure[$np=4$]{
		\begin{minipage}[c]{0.312\textwidth}
		  \centering
            \includegraphics[height=1.25\textwidth]{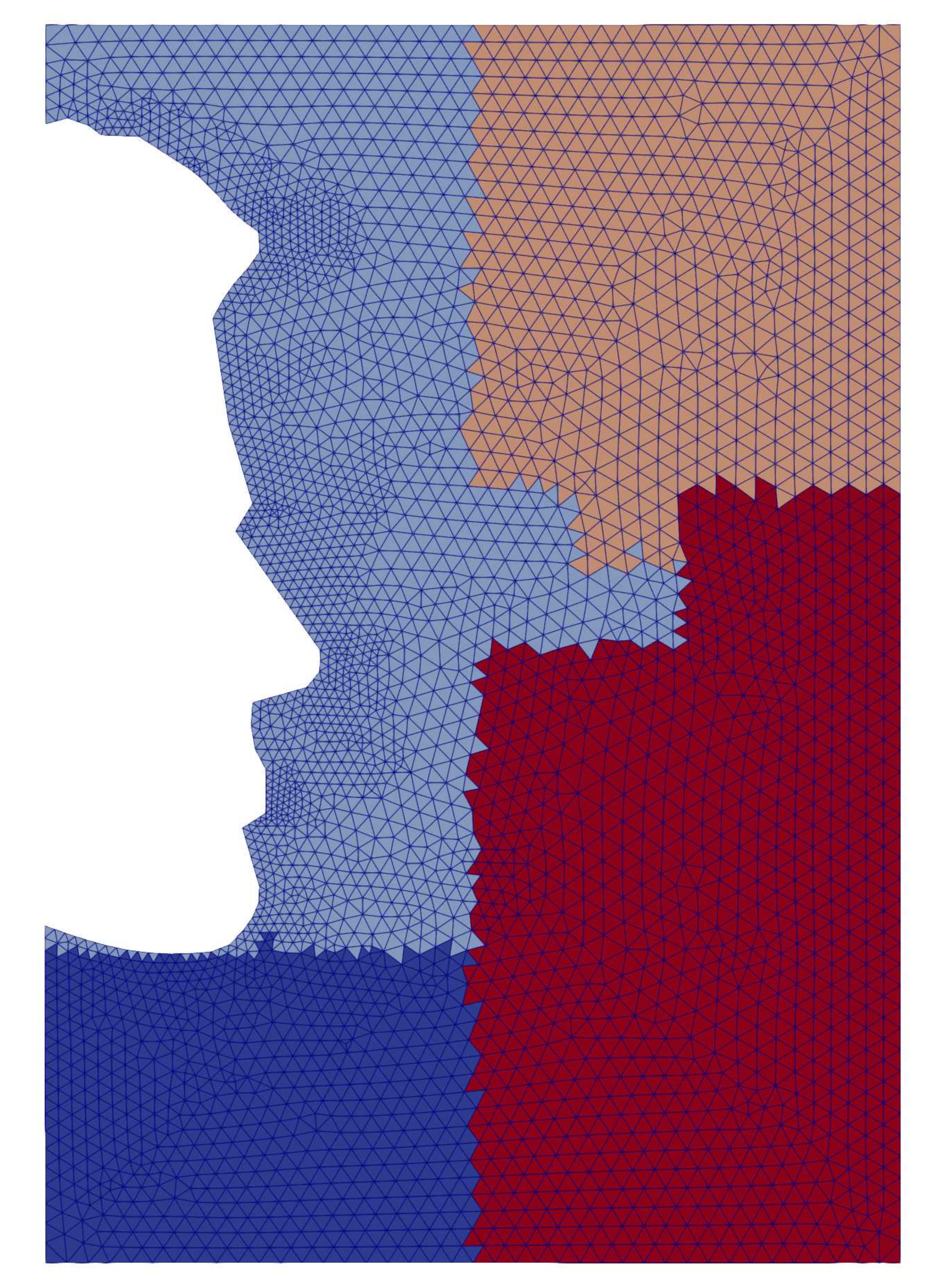}
		\end{minipage}
		} \subfigure[mesh quality]{
		\begin{minipage}[c]{0.312\textwidth}
		  \centering
            \includegraphics[height=1.25\textwidth]{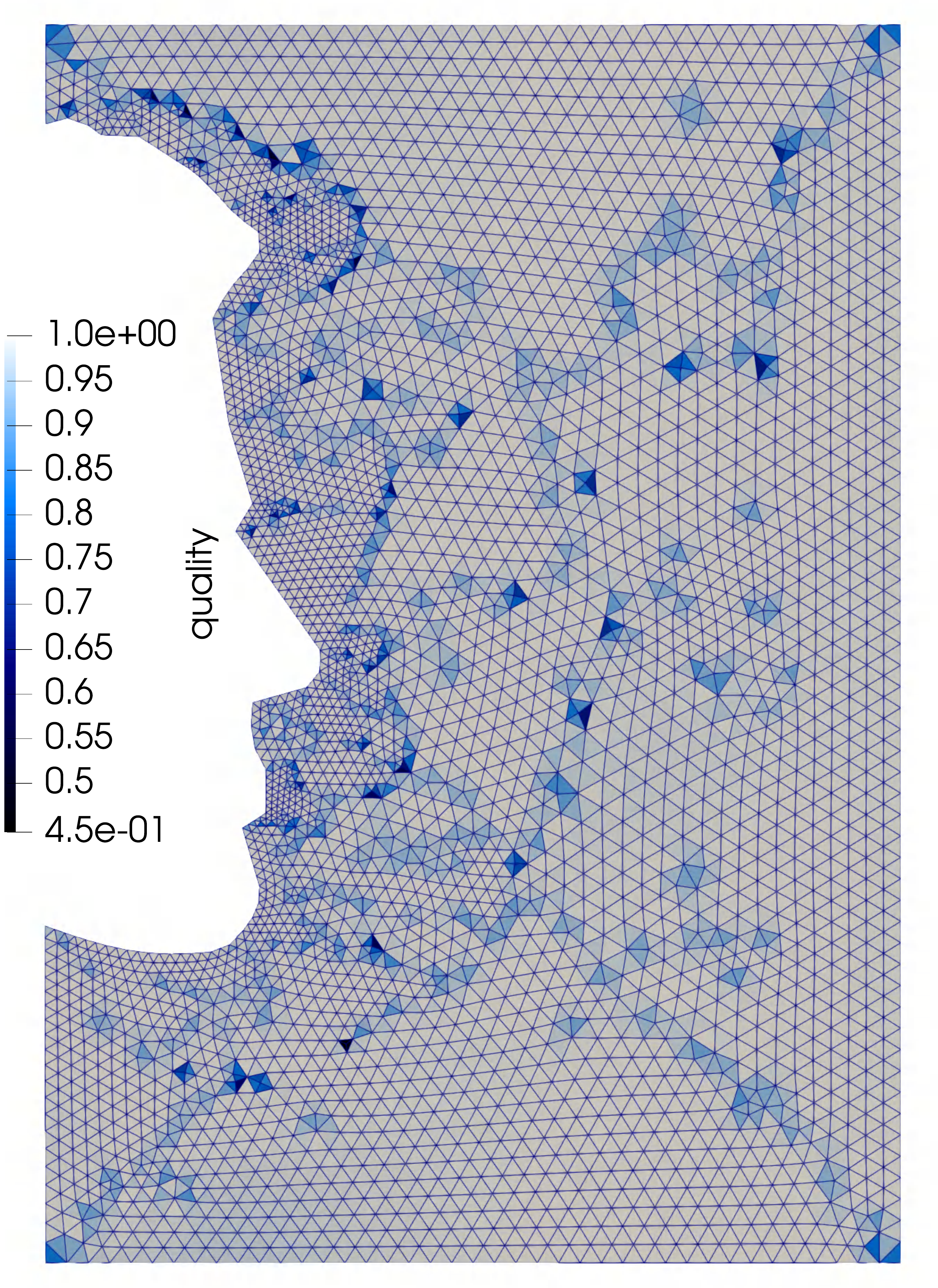}
		\end{minipage}
		} 
		\caption{\color{red} The CPAFT algorithm with Laplacian smoothing. The local scales are 0.03 at the left boundary and 0.05 at the right boundary. All simulations start with 295 initial fronts and result in a total of 6,593 elements. The generated mesh with quality $\alpha\geq0.45$.}
		\label{fig: face}
\end{figure}

\ 

%\subsection{Effect of parameters and surface tessellation}
{\color{blue}
\subsection{Effects of parameters on the CPAFT algorithm}

We then focus on analyzing the effects of key parameters in the CPAFT algorithm: (1) the parameter $\beta_{1}$ in Criterion A, which controls distance between newly inserted vertices and the nearby existing fronts; (2) the parameter $\beta_{2}$ in Criterion B, which governs the distance between nearby vertices and the newly generated fronts; and (3) the SFC level $\mathcal{L}$ managing the domain decomposition.
The demo t13 (Figure~\ref{fig: 3D model-op}-(a)) of Gmsh \citep{geuzaine2009gmsh} is used as the benchmark model. Several simulations with parameter ${\cal L}=6$ and varying parameters $(\beta_1, \beta_2)$ are produced. The effects of parameters $(\beta_1, \beta_2)$ on mesh quality and total computing time are summarized in Table~\ref{tab: effect-quality}. From these results, we observe that the mesh quality is almost insensitive to the parameters $(\beta_1,\beta_2)$. As $\beta_1$ increases and $\beta_2$ decreases, the algorithm becomes more inclined to select existing points as advancing points rather than inserting new ones, resulting in a slight decrease in total computing time. After a comprehensive comparison of mesh quality and computing time, the parameters $(\beta_1, \beta_2) = (0.40, 0.15)$ are selected as the optimal configuration.

\begin{table}[H]
    \centering
    \caption{\color{blue}Effects of parameters $(\beta_1, \beta_2)$ on mesh quality and total computing time. Here, $\mathcal{L} = 6$ and the values in this table represent the percentage of elements within the specified range of $\alpha$. ``Time(s)" indicates the total computing time utilizing 8 processors.}
    \setlength{\tabcolsep}{6.0mm}{
        \center
        \begin{tabular} {|c|c|c|c|c|c|}
		  \hline
            \diagbox{$(\beta_1, \beta_2)$}{$\alpha$} &$(0, 0.3)$ &$[0.3, 0.5)$ &$[0.5, 0.7)$ &$[0.7, 1.0]$  &Time\\\cline{1-6}
            $(0.30, 0.25)$ &$3.33\%$ &$10.20\%$ &$45.39\%$ &$41.08\%$ &2.675 \\\cline{1-6}
            $(0.30, 0.15)$ &$2.88\%$ &$8.66\%$ &$45.43\%$ &$43.03\%$ &2.709 \\\cline{1-6}
            $(0.30, 0.05)$ &$2.81\%$ &$8.57\%$ &$46.57\%$ &$42.05\%$ &2.628 \\\cline{1-6}
            $(0.40, 0.25)$ &$2.78\%$ &$8.48\%$ &$46.44\%$ &$42.30\%$ &2.335 \\\cline{1-6} 
            $(0.40, 0.15)$ &$2.81\%$ &$8.80\%$ &$48.16\%$ &$40.24\%$ &2.329 \\\cline{1-6}
            $(0.40, 0.05)$ &$2.99\%$ &$9.23\%$ &$46.67\%$ &$41.10\%$ &2.285 \\\cline{1-6}
            $(0.50, 0.25)$ &$3.15\%$ &$10.04\%$ &$46.98\%$ &$39.83\%$ &2.151 \\\cline{1-6}
            $(0.50, 0.15)$ &$2.98\%$ &$9.79\%$ &$47.48\%$ &$39.74\%$ &2.148 \\\cline{1-6}
            $(0.50, 0.05)$ &$4.04\%$ &$10.66\%$ &$47.30\%$ &$38.00\%$ &2.134 \\\hline
		\end{tabular}
    }
    \label{tab: effect-quality}
\end{table}

\begin{figure}[H]
		\centering
		\subfigure[$(0.30, 0.25)$]{
		\begin{minipage}[c]{0.29\textwidth}
		  \centering
            \includegraphics[width=0.95\textwidth]{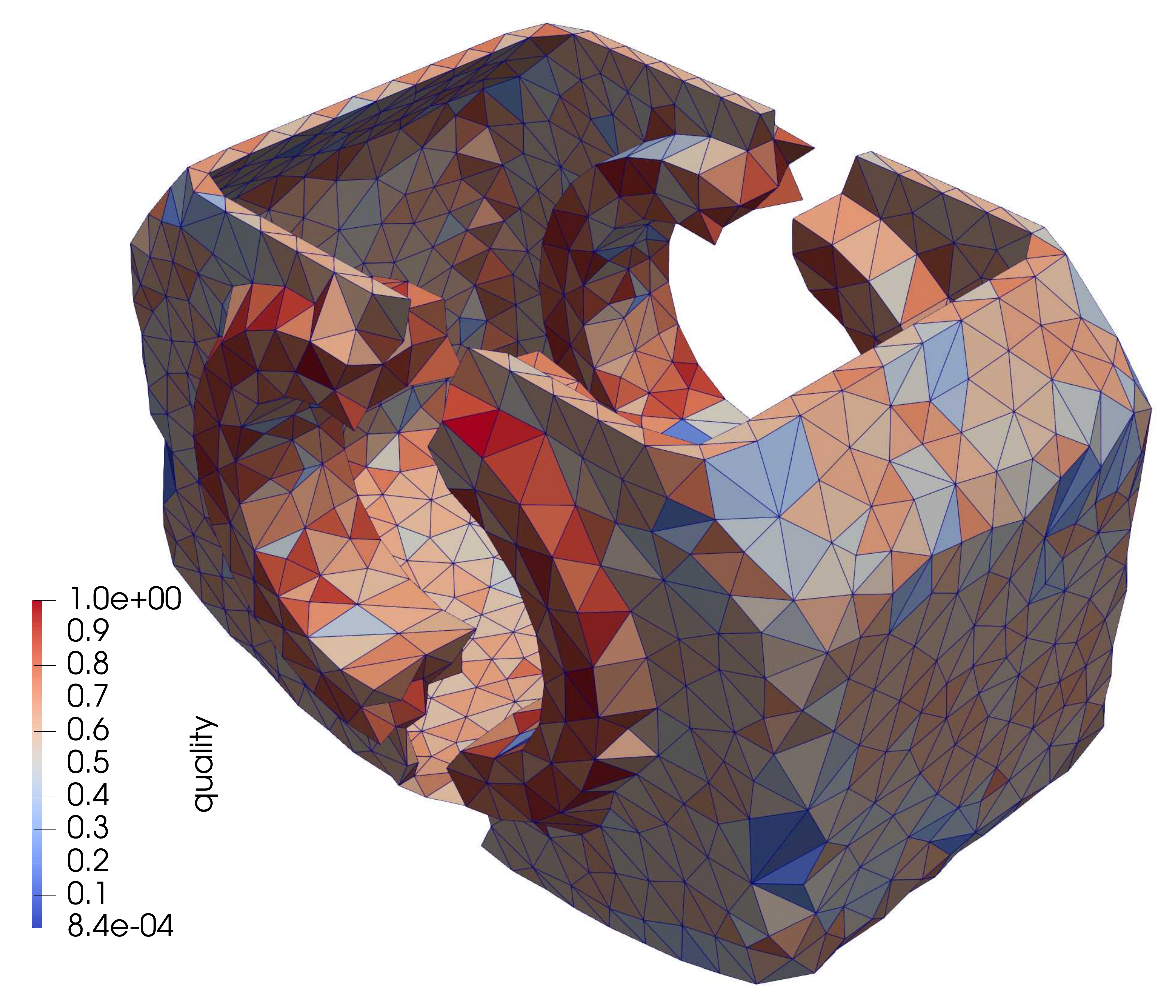}
		\end{minipage}
		} \subfigure[$(0.30, 0.15)$]{
		\begin{minipage}[c]{0.29\textwidth}
		  \centering
            \includegraphics[width=0.95\textwidth]{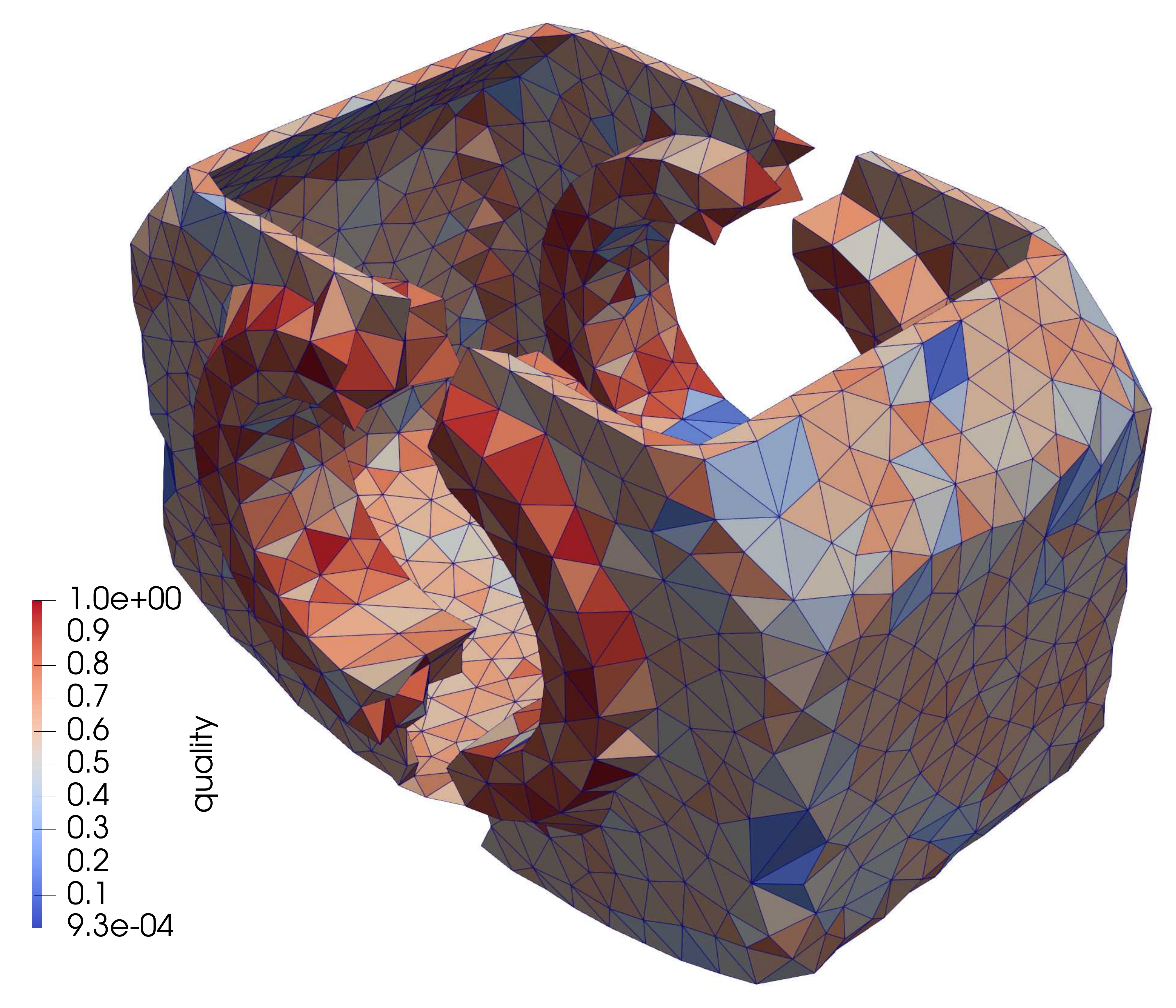}
		\end{minipage}
		} \subfigure[$(0.30, 0.05)$]{
		\begin{minipage}[c]{0.29\textwidth}
		  \centering
            \includegraphics[width=0.95\textwidth]{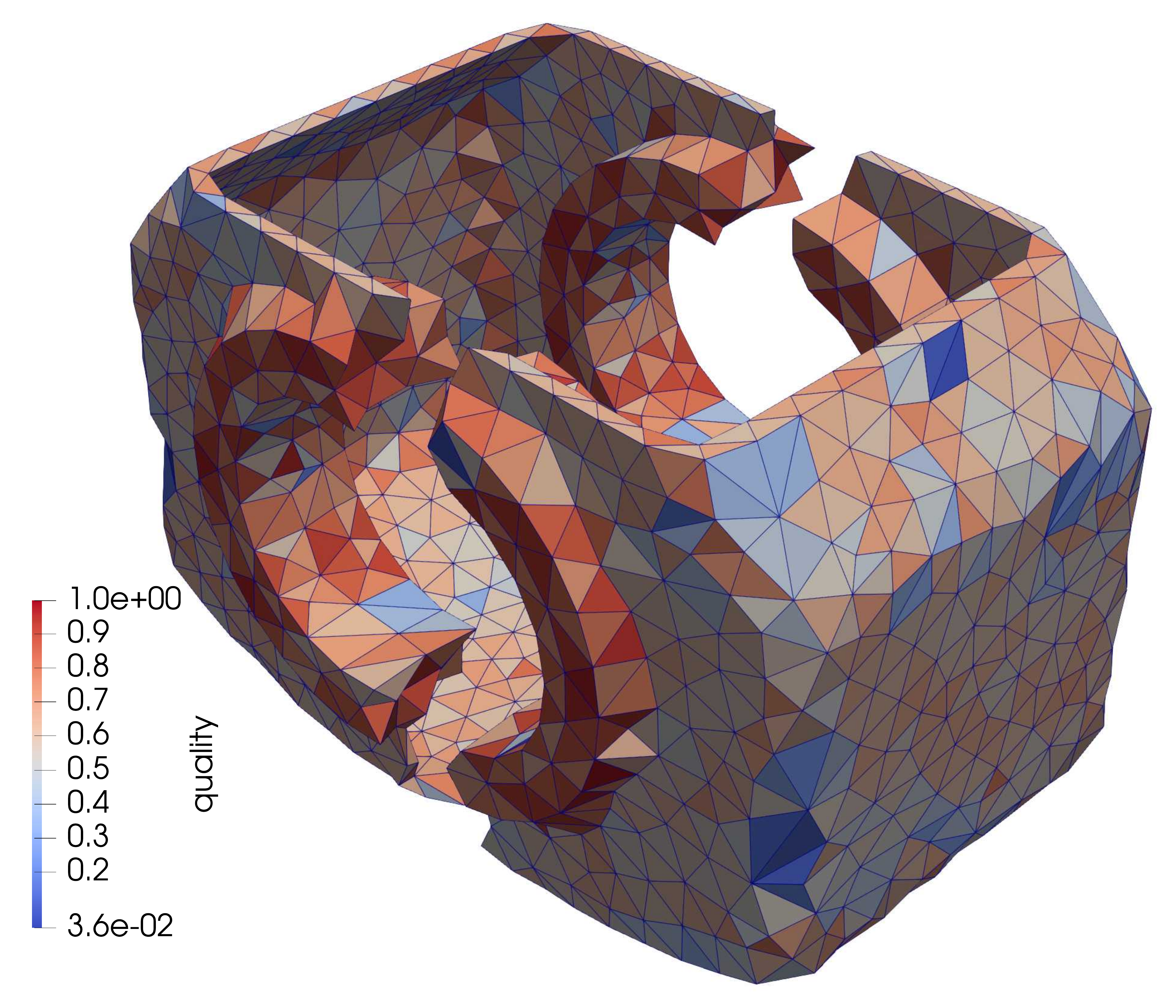}
		\end{minipage}
		} 
  
        \subfigure[$(0.40, 0.25)$]{
		\begin{minipage}[c]{0.29\textwidth}
		  \centering
            \includegraphics[width=0.95\textwidth]{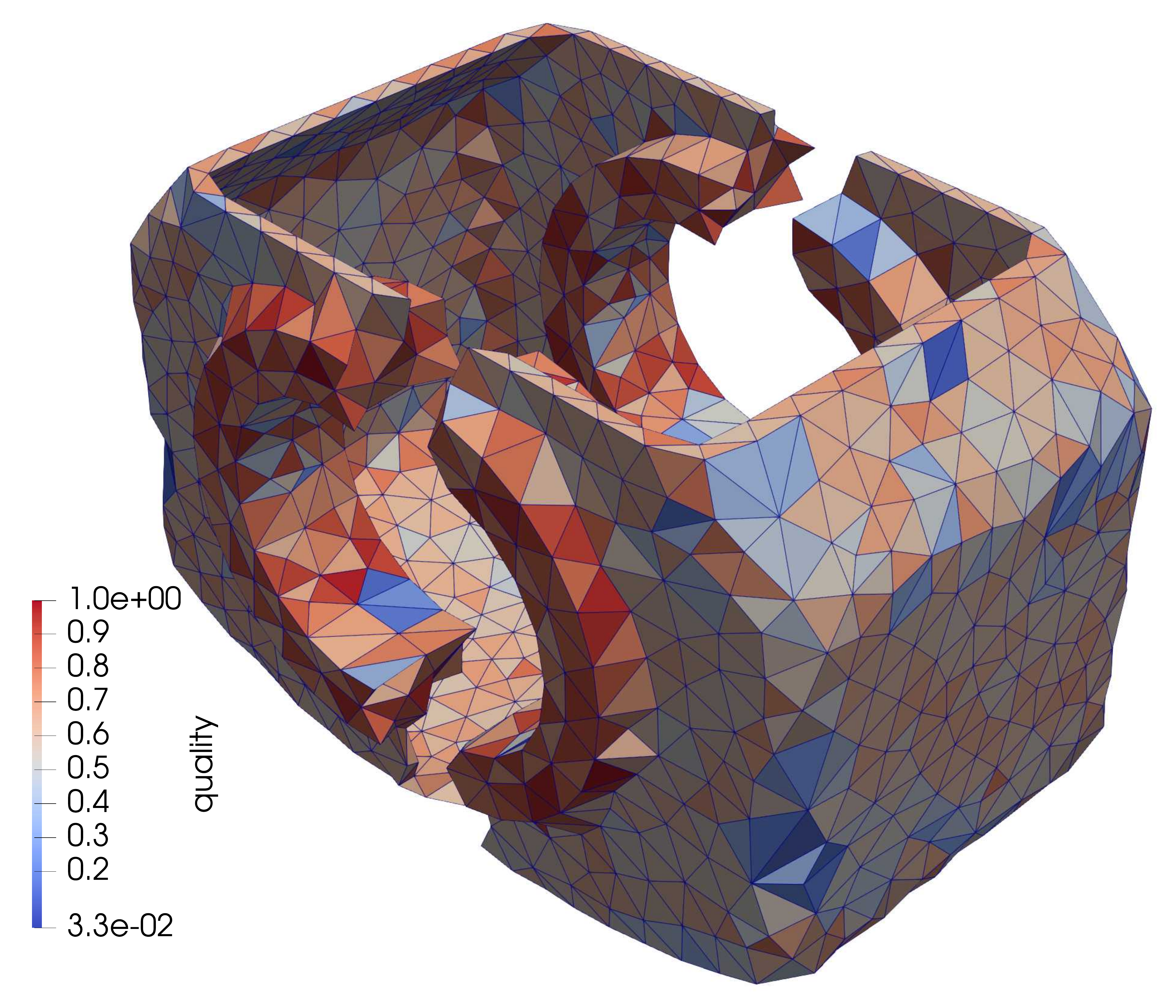}
		\end{minipage}
		} \subfigure[$(0.40, 0.15)$]{
		\begin{minipage}[c]{0.29\textwidth}
		  \centering
            \includegraphics[width=0.95\textwidth]{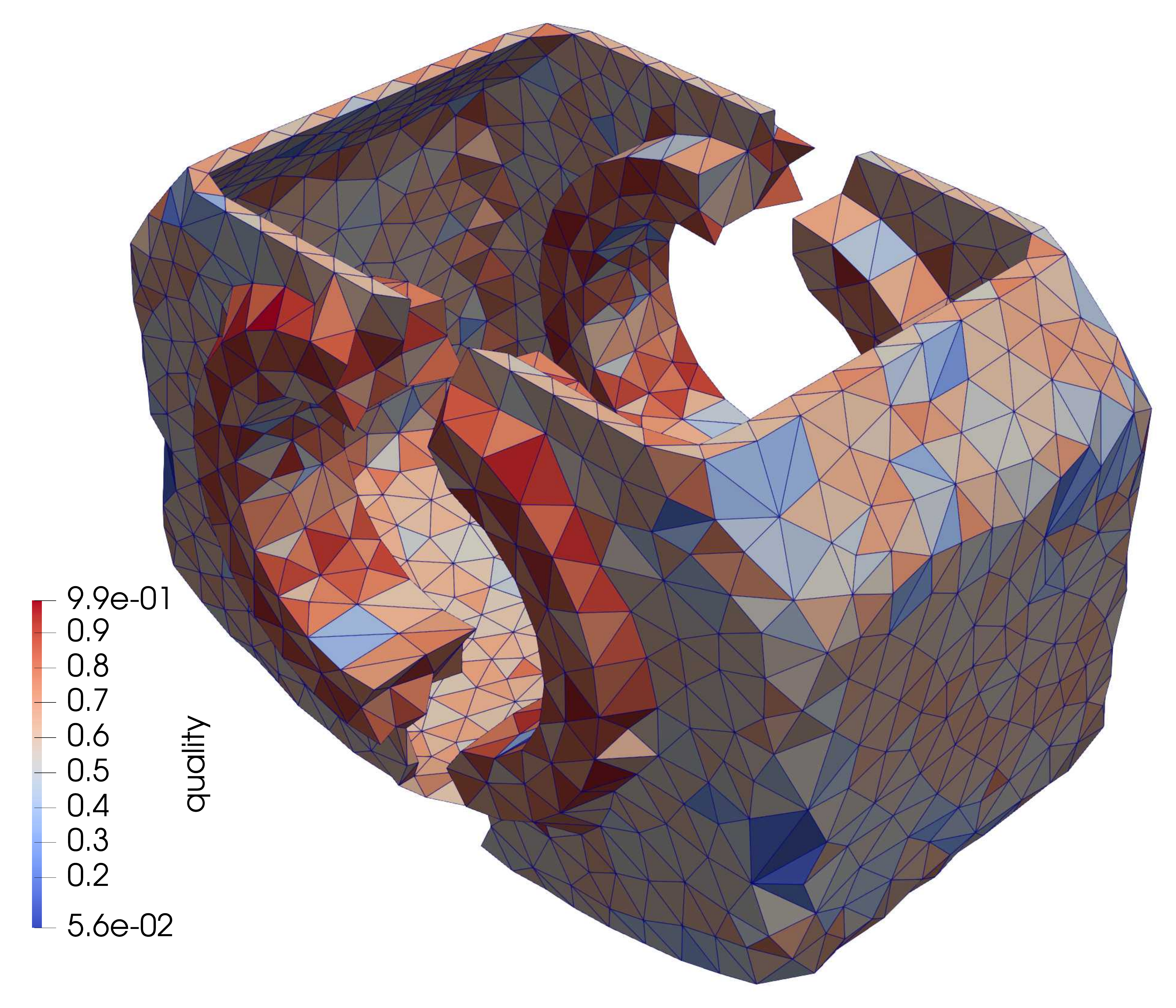}
		\end{minipage}
		} \subfigure[$(0.40, 0.05)$]{
		\begin{minipage}[c]{0.29\textwidth}
		  \centering
            \includegraphics[width=0.95\textwidth]{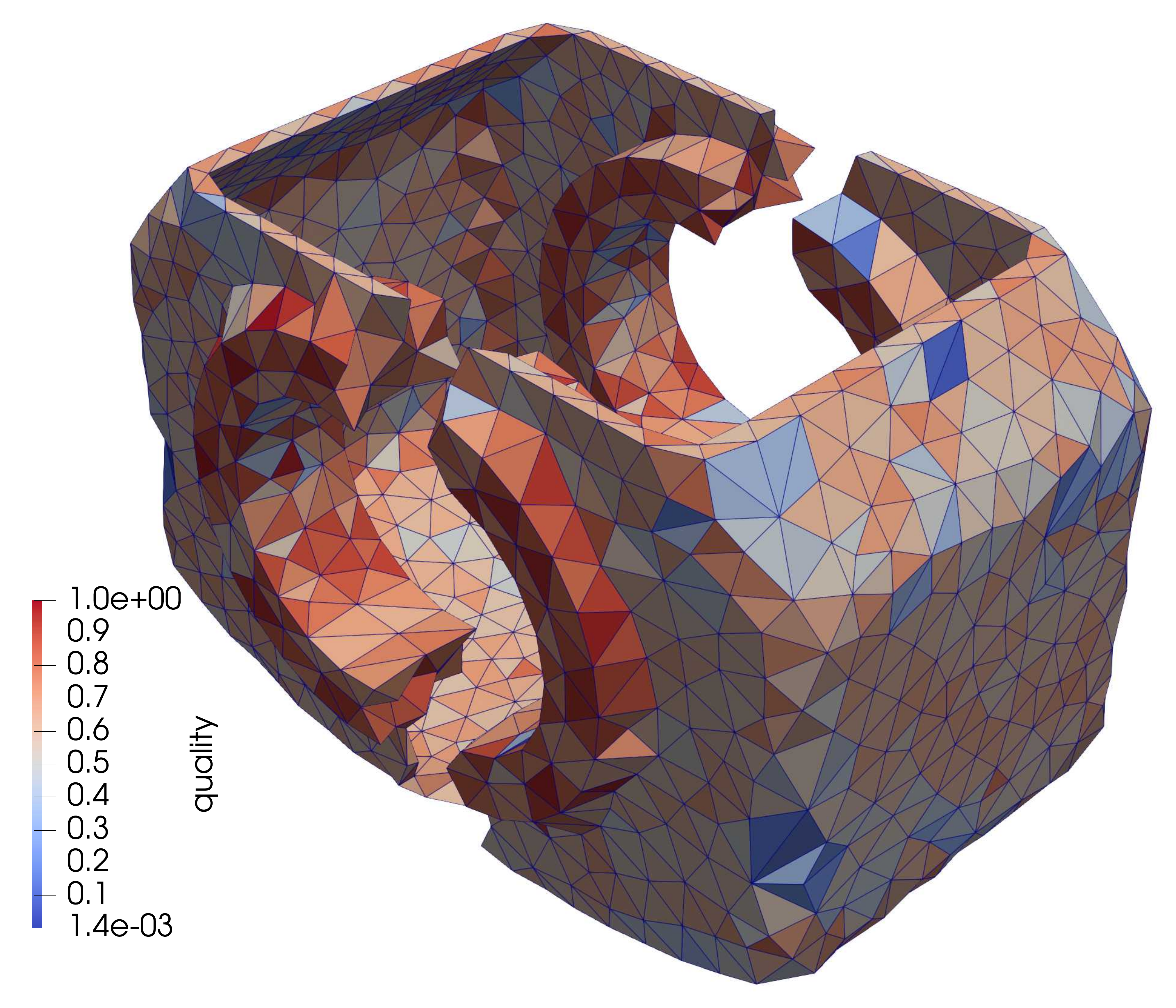}
		\end{minipage}
		} 
  
        \subfigure[$(0.50, 0.25)$]{
		\begin{minipage}[c]{0.29\textwidth}
		  \centering
            \includegraphics[width=0.95\textwidth]{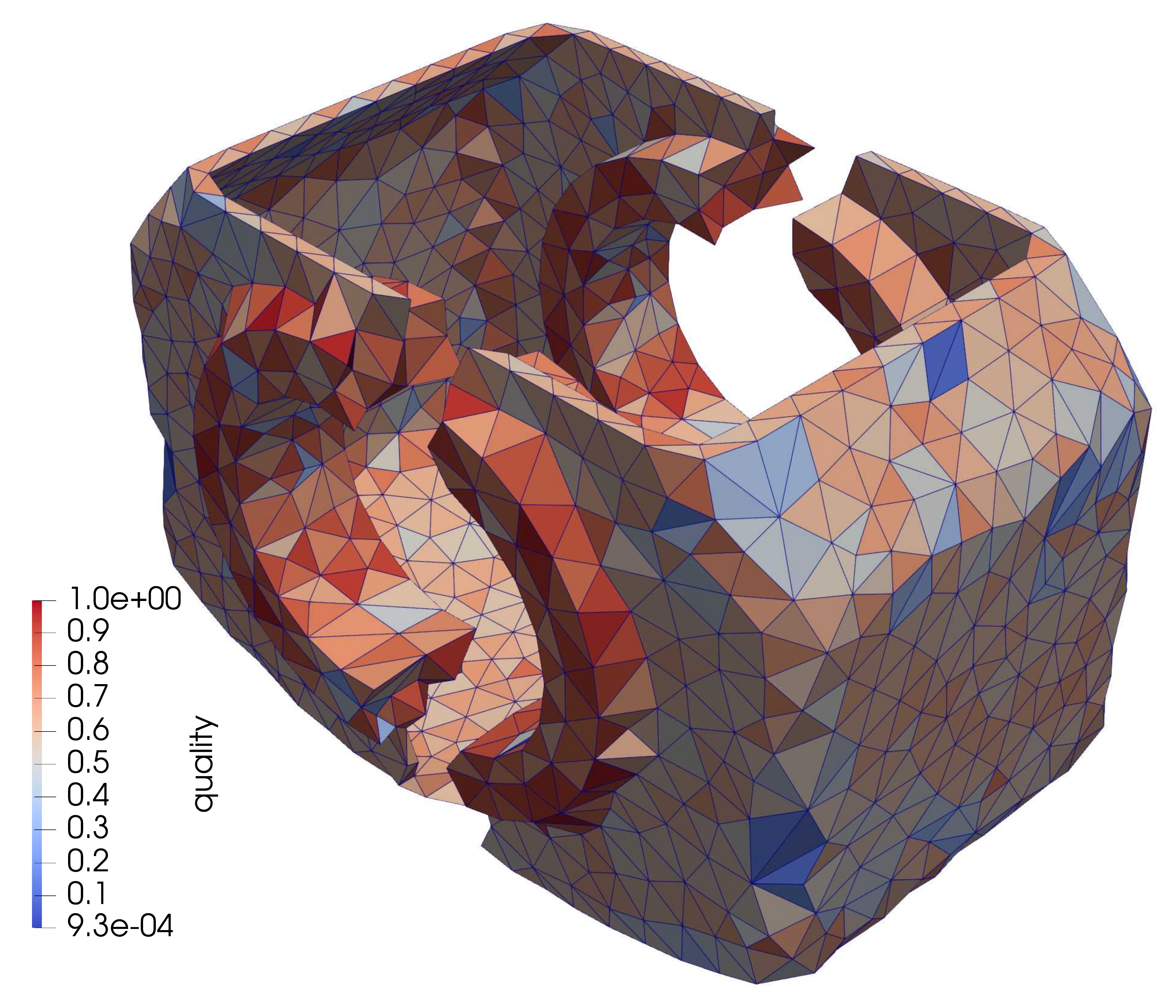}
		\end{minipage}
		} \subfigure[$(0.50, 0.15)$]{
		\begin{minipage}[c]{0.29\textwidth}
		  \centering
            \includegraphics[width=0.95\textwidth]{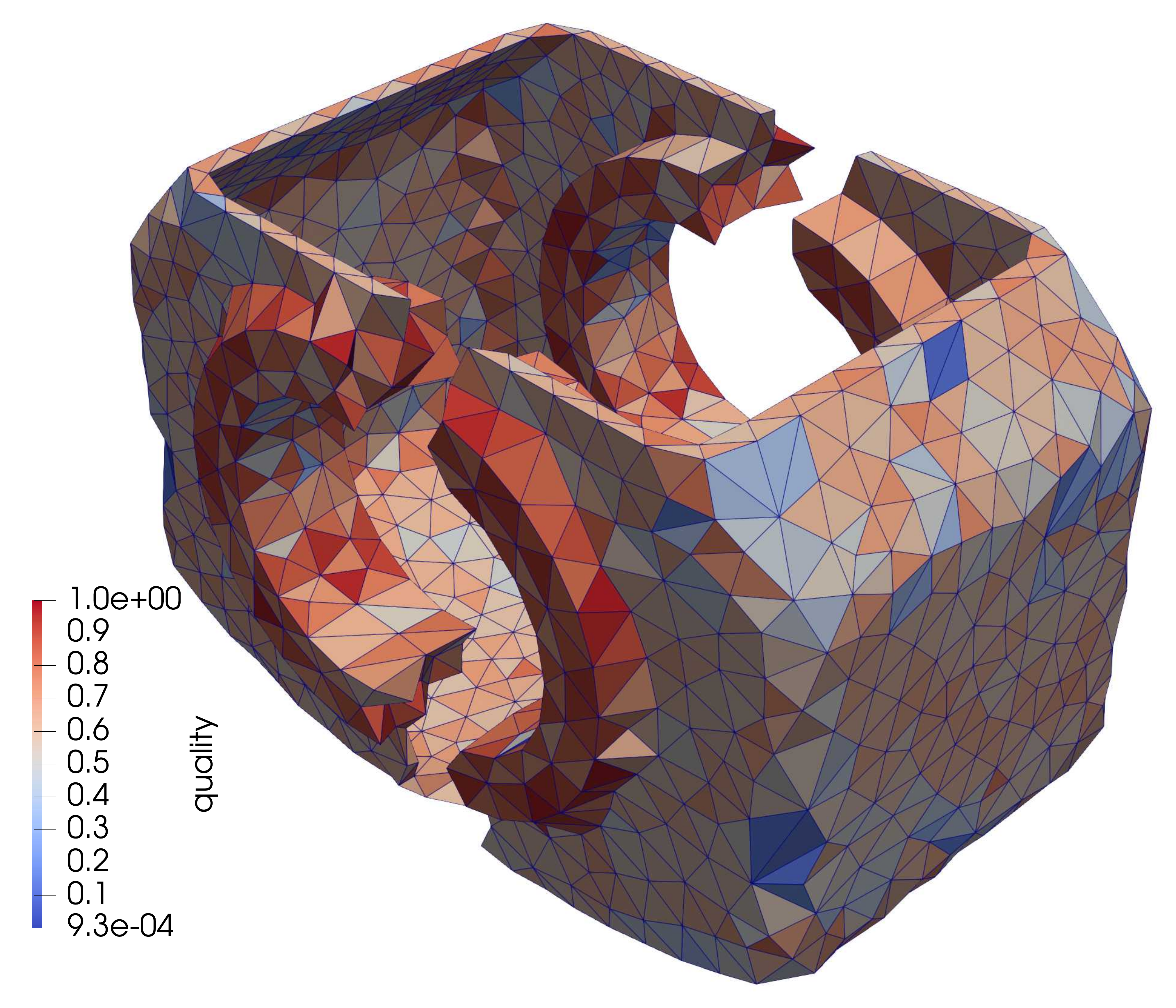}
		\end{minipage}
		} \subfigure[$(0.50, 0.05)$]{
		\begin{minipage}[c]{0.29\textwidth}
		  \centering
            \includegraphics[width=0.95\textwidth]{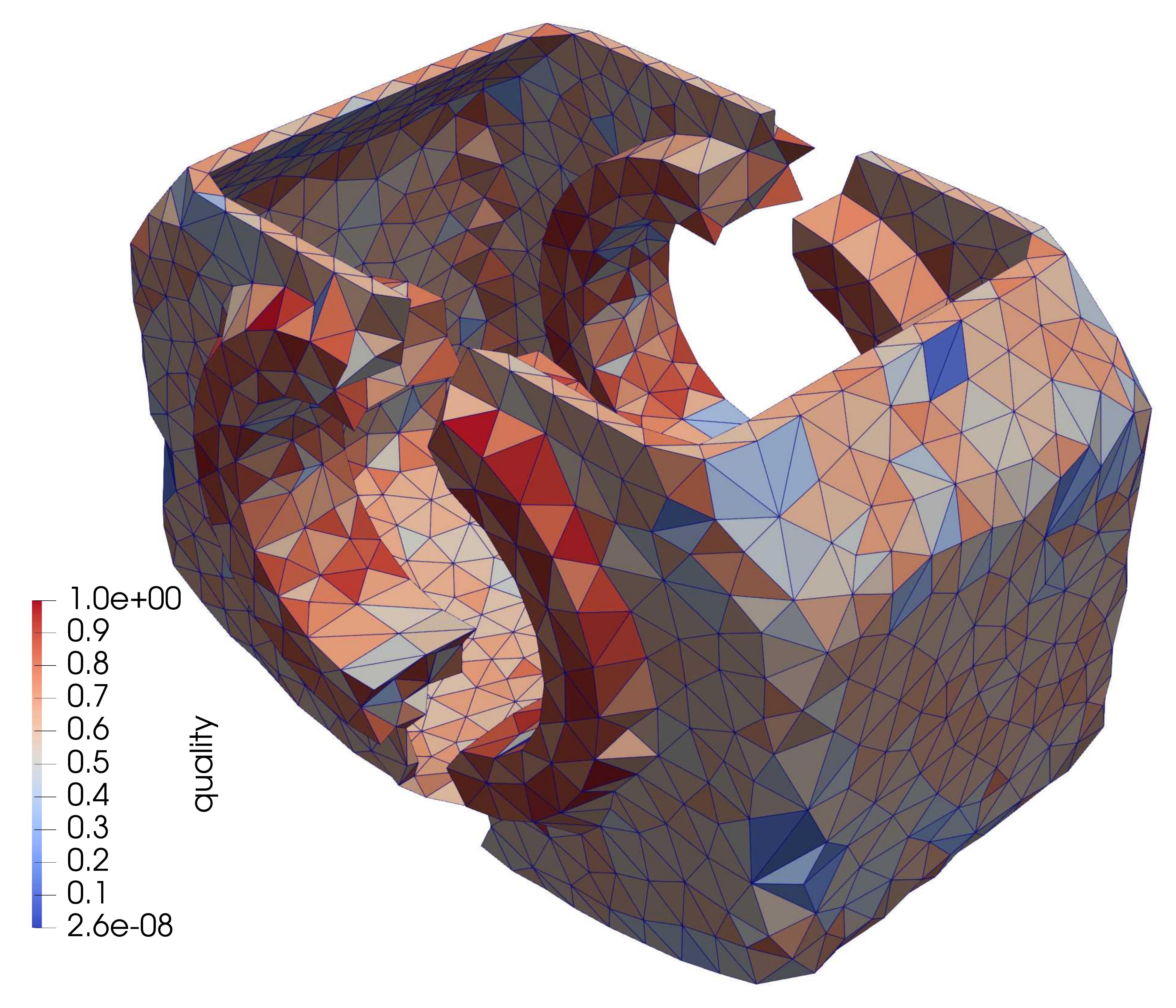}
		\end{minipage}
		} 
		\caption{\color{blue} (a)-(i) Mesh qualities of demo t13 meshes after Laplacian smoothing and local remeshing with different parameters $(\beta_1, \beta_2)$. To show the narrow features in the model, we decompose the mesh into two parts. %(e) The generated meshes with indicator $\alpha \geq 0.056$ when the optimal parameters $(\beta_1, \beta_2) = (0.40, 0.15)$ are selected.
  }
		\label{fig: t13-para-quality}
\end{figure}

Next, we set $(\beta_1,\beta_2)=(0.40,0.15)$ to study the effect of parameter $\cal L$. Three simulations with $\mathcal{L}=5,6,7$ are conducted. 
Table~\ref{tab: effect-quality-L} shows the quality of the resulting meshes for each $\mathcal{L}$, illustrating that the mesh quality is largely insensitive to changes in $\mathcal{L}$. For each $\mathcal{L}$, we run simulations with number of processors varying from 1, 2, 4, to 8. The computing time and parallel efficiency for all simulations are presented in Table~\ref{tab: effect-runtime}. The results indicate a slight increase in computing time as the level $\mathcal{L}$ of SFC increases. On the other hand, increasing $\mathcal{L}$ enhances parallel efficiency, which could offer performance benefits when using a larger number of processors.
%The results show there is a slight increase in computing time as the level $\mathcal{L}$ of SFC increases. On the other hand, satisfactory parallel efficiency necessitates a level $\mathcal{L}$ that is matched with the problem scale. In this particular case with 8 processors, selecting $\mathcal{L} = 5$ is optimal. The primary reason is that with $\mathcal{L} = 5$, the scale of the background mesh is already smaller than that of the problem, sufficient to achieve a balanced partition.
\begin{table}[H]
    \centering
    \caption{\color{blue}Effects of parameter $\mathcal{L}$ on mesh quality. Here, $(\beta_1, \beta_2) = (0.40, 0.15)$ and the values in this table represent the percentage of elements within the specified range of $\alpha$.  }
    \setlength{\tabcolsep}{9.2mm}{
        \center
        \begin{tabular} {|c|c|c|c|c|}
		  \hline
             \diagbox{$\mathcal{L}$}{$\alpha$} &$(0, 0.3)$ &$[0.3, 0.5)$ &$[0.5, 0.7)$ &$[0.7, 1.0]$ \\\cline{1-5}
            $\mathcal{L}=5$ &$2.86\%$ &$9.74\%$ &$46.81\%$ &$40.58\%$ \\\cline{1-5}
            $\mathcal{L}=6$ &$2.81\%$ &$8.80\%$ &$48.16\%$ &$40.24\%$ \\\cline{1-5}
            $\mathcal{L}=7$ &$2.88\%$ &$9.34\%$ &$49.05\%$ &$38.73\%$ \\\hline
		\end{tabular}
    }
    \label{tab: effect-quality-L}
\end{table}

% \begin{table}[H]
%     \centering
%     \caption{\color{blue}Effects of parameter $\mathcal{L}$ on mesh quality. Here, $(\beta_1, \beta_2) = (0.40, 0.15)$ and the values in this table represent the percentage of elements within the specified range of $\mathcal{L}$. ``Time(s)" represents the total computing time utilizing 8 processors. }
%     \setlength{\tabcolsep}{10.6mm}{
%         \center
%         \begin{tabular} {|c|c|c|c|c|}
% 		  \hline
%              &$(0, 0.3)$ &$[0.3, 0.5)$ &$[0.5, 0.7)$ &$[0.7, 1.0]$ \\\cline{1-5}
%             $\mathcal{L}=5$ &$0\%$ &$0.24\%$ &$6.38\%$ &$93.37\%$ \\\cline{1-5}
%             $\mathcal{L}=6$ &$0\%$ &$0.28\%$ &$7.87\%$ &$91.85\%$ \\\cline{1-5}
%             $\mathcal{L}=7$ &$0\%$ &$0.24\%$ &$8.06\%$ &$91.70\%$ \\\hline
% 		\end{tabular}
%     }
%     \label{tab: effect-quality-L}
% \end{table}

\begin{table}[H]
	\centering
	\caption{\color{blue} Effects of parameter $\mathcal{L}$ on total computing time (``Time(s)") and parallel efficiency (``Eff($\%$)"). Here $(\beta_1, \beta_2) = (0.40, 0.15)$ and ``Eff ($\%$)$=T_1/(nT_n)$" denotes the parallel efficiency using $n$ processors. }
        \setlength{\tabcolsep}{8.4mm}{
        \center
        \begin{tabular} {|c|c|c|c|c|c|}
		  \hline
            &$np$ &$1$ &$2$ &$4$ &$8$  \\\cline{1-6}
            \multirow{2}{*}{$\mathcal{L} = 5$} &Time &13.524 &7.138 &3.965 &2.357 \\\cline{2-6}
            &Eff($\%$) &$100\%$ &$94.73\%$ &$85.27\%$ &$71.72\%$ \\\cline{1-6}
            \multirow{2}{*}{$\mathcal{L} = 6$} &Time &14.380 &7.477 &4.166 &2.329 \\\cline{2-6}
            &Eff($\%$) &$100\%$ &$96.16\%$ &$86.29\%$ &$77.18\%$ \\\cline{1-6}
            \multirow{2}{*}{$\mathcal{L} = 7$} &Time &18.204 &9.117 &5.217 &2.936 \\\cline{2-6}
            &Eff($\%$) &$100\%$ &$99.84\%$ &$87.23\%$ &$77.50\%$ \\\cline{1-6}
		\end{tabular}
        }
	\label{tab: effect-runtime}
\end{table}

% \begin{table}[H]
% 	\centering
% 	\caption{\color{blue} Effects of parameter $\mathcal{L}$ on total computing time (``Time(s)") and parallel efficiency (``Eff($\%$)"). Here $(\beta_1, \beta_2) = (0.40, 0.15)$ and ``Eff ($\%$)$=T_1/(nT_n)$" denotes the parallel efficiency using $n$ processors. }
%         \setlength{\tabcolsep}{8.4mm}{
%         \center
%         \begin{tabular} {|c|c|c|c|c|c|}
% 		  \hline
%             &$np$ &$1$ &$2$ &$4$ &$8$  \\\cline{1-6}
%             \multirow{2}{*}{$\mathcal{L} = 5$} &Time &10.076 &5.645 &3.632 &2.412 \\\cline{2-6}
%             &Eff($\%$) &$100\%$ &$89.25\%$ &$69.36\%$ &$52.22\%$ \\\cline{1-6}
%             \multirow{2}{*}{$\mathcal{L} = 6$} &Time &10.457 &5.670 &3.651 &2.408 \\\cline{2-6}
%             &Eff($\%$) &$100\%$ &$92.21\%$ &$71.60\%$ &$54.28\%$ \\\cline{1-6}
%             \multirow{2}{*}{$\mathcal{L} = 7$} &Time &13.057 &6.589 &4.395 &2.948 \\\cline{2-6}
%             &Eff($\%$) &$100\%$ &$99.08\%$ &$74.27\%$ &$55.36\%$ \\\cline{1-6}
% 		\end{tabular}
%         }
% 	\label{tab: effect-runtime}
% \end{table}
}

\ 

\subsection{Mesh quality of the CPAFT algorithm}
\label{meshquality}

{\color{red}
We demonstrate the mesh quality of the meshes generated by the CPAFT algorithm for several 3D models and compare it with three open-source software tools including Gmsh \citep{geuzaine2009gmsh}, NETGEN \citep{schoberl1997netgen}, and TetGen \citep{hang2015tetgen}. 
First, we carry out experiments on two multiply-connected domains for tetrahedral mesh generation, which are demo t13 and demo t20 from Gmsh \citep{geuzaine2009gmsh}. For each demo, we run three simulations using 1, 2, and 4 processors, respectively.  
The geometries of the two demos, along with the corresponding unstructured meshes generated by the CPAFT algorithm with Laplacian smoothing, are displayed in Figure~\ref{fig: 3D model-op}, indicating both the parallel consistency of the CPAFT algorithm and the high quality of the generated meshes. 
For these two demos, the mesh quality indicators $\alpha$ for the output meshes produced by the original CPAFT algorithm, the CPAFT algorithm with Laplacian smoothing, and the CPAFT algorithm with both Laplacian smoothing and local remeshing are presented in Figure~\ref{fig: quality3D-op}-(a). 
The results clearly demonstrate that the Laplacian smoothing and local remeshing approaches can significantly improve mesh quality.
Given the favorable geometry of both demos, all three versions of the CPAFT algorithm are able to generate tetrahedral meshes with satisfactory quality, comparable to that of the reference open-source software tools, as shown in Figure~\ref{fig: quality3D-op}-(b).

\begin{figure}[H]
    \centering
		\subfigure[Geometry (.stl)]{
		\begin{minipage}[c]{0.465\textwidth}
		  \centering
            \includegraphics[height=0.45\textwidth]{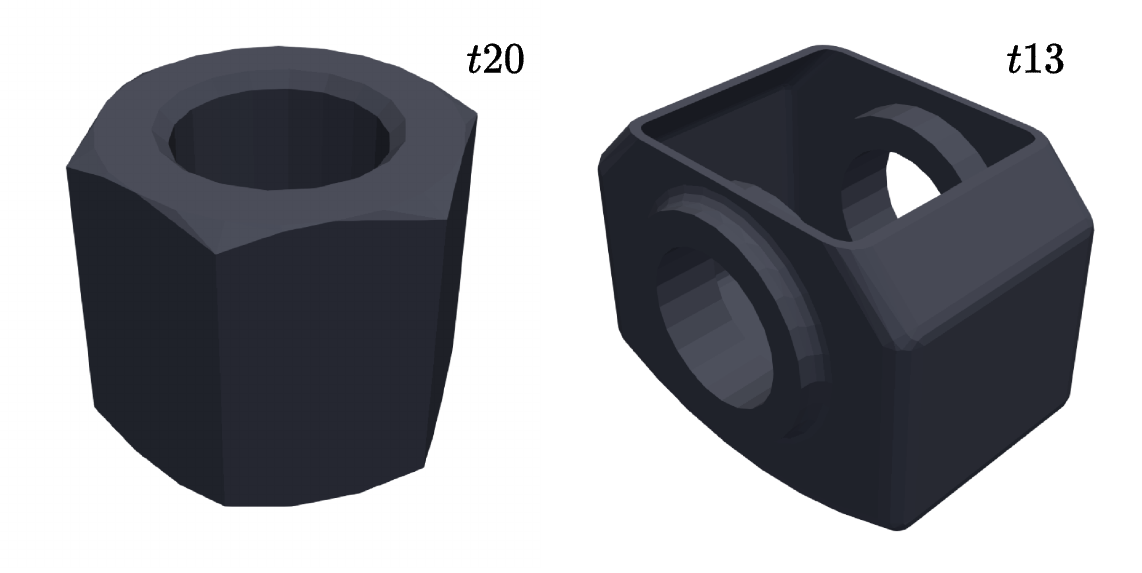}
		\end{minipage}
		} \subfigure[$np = $ 1]{
		\begin{minipage}[c]{0.465\textwidth}
		  \centering
            \includegraphics[height=0.45\textwidth]{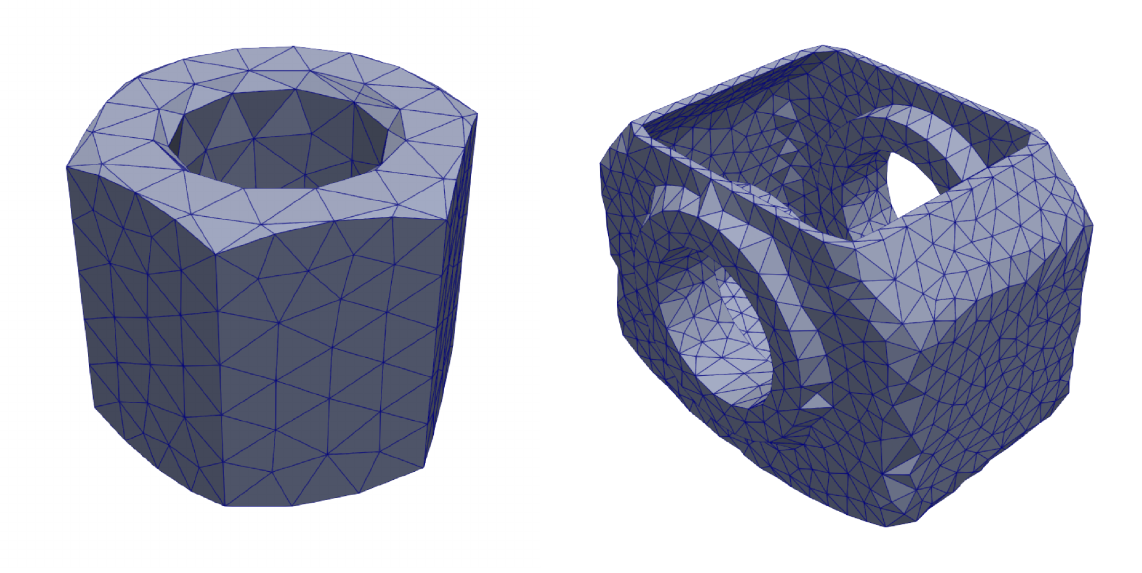}
		\end{minipage}
		} 
  
        \subfigure[$np = $ 2]{
		\begin{minipage}[c]{0.465\textwidth}
		  \centering
            \includegraphics[height=0.45\textwidth]{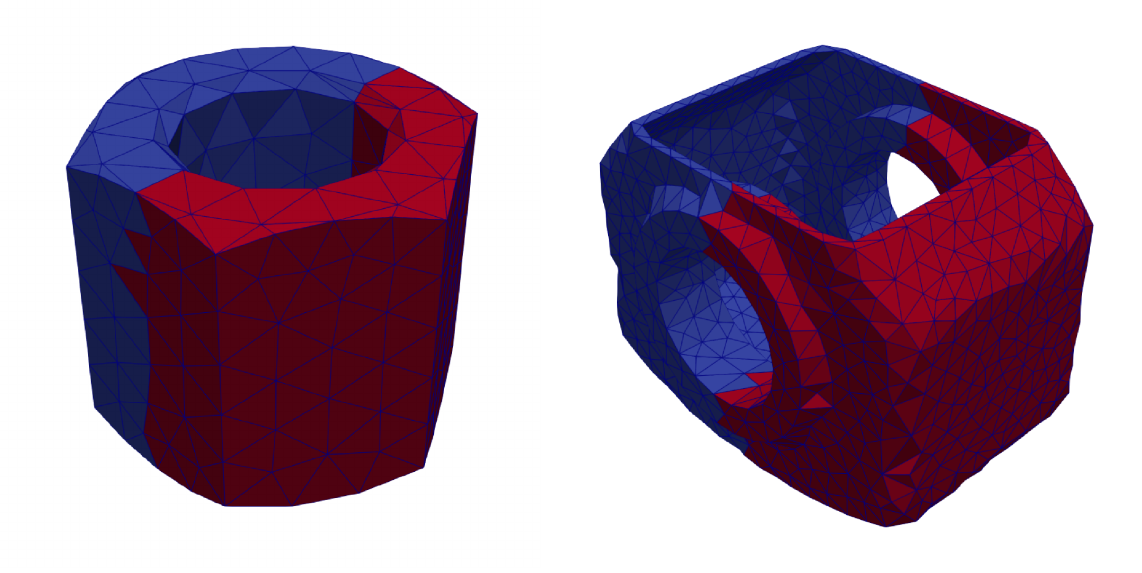}
		\end{minipage}
		} \subfigure[$np = $ 4]{
		\begin{minipage}[c]{0.465\textwidth}
		  \centering
            \includegraphics[height=0.45\textwidth]{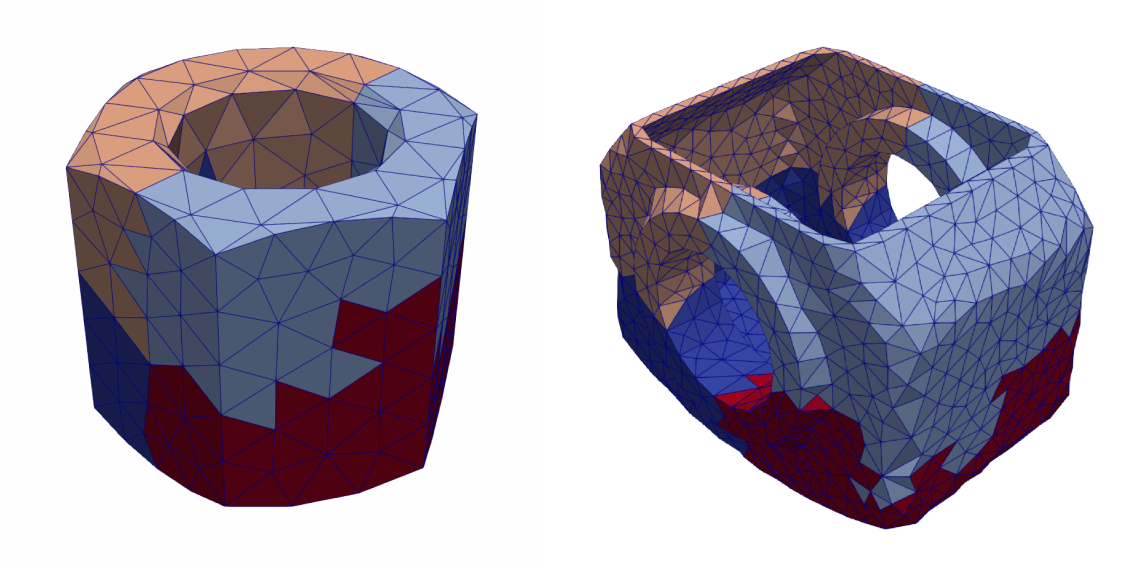}
		\end{minipage}
		}
    \caption{(a) The geometries for two demos t13 and t20.
    The meshes generated by the CPAFT with Laplacian smoothing using  (b) $np=1$, (c) $np=2$, and (d) $np=4$ processors, respectively. All simulations lead to 6,035 elements for t13 and 1,443 elements for t20, respectively.}
    \label{fig: 3D model-op}
\end{figure}

% \begin{figure}[H]
%     \centering
%     \subfigure[t20 mesh quality breakdown]{
%         \begin{minipage}[c]{0.45\linewidth}
% 		  \centering
% 		  %\includegraphics[width=0.9\linewidth]{}
%             \begin{tikzpicture}
%                 \clip (0.0, -1.2) rectangle (7.82cm, 4cm);
%                 \node[anchor=south west,inner sep=0] (image) at (0,0) {\includegraphics[width=0.74\textwidth]{figure/t20_Remeshed_cut.pdf}};
%                 \clip (7.2,-0.0) circle (2.2cm);
%                 \node[anchor=south east,inner sep=0] (image) at (8.7,-1.12) {\includegraphics[width=0.49\textwidth]{}};
%             \end{tikzpicture}
% 		\end{minipage}
%     }\subfigure[t13 mesh quality breakdown]{
%         \begin{minipage}[c]{0.45\linewidth}
% 		  \centering
% 		  %\includegraphics[width=0.9\linewidth]{}
%             \begin{tikzpicture}
%                 \clip (1.25, -1.2) rectangle (10cm, 4cm);
%                 \node[anchor=south west,inner sep=0] (image) at (0,0) {\includegraphics[width=0.74\textwidth]{figure/t13_Remeshed_cut.pdf}};
%                 \clip (7.2,-0.0) circle (2.2cm);
%                 \node[anchor=south east,inner sep=0] (image) at (8.7,-1.12) {\includegraphics[width=0.49\textwidth]{}};
%             \end{tikzpicture}
% 		\end{minipage}
%     }
%     \caption{Breakdown of mesh quality indicators $\alpha$ for two 3D demos. We present the original quality, the smoothed quality and the optimized quality for t13 and t20, respectively.}
%     \label{fig: quality3D-op}
% \end{figure}

\ 

\begin{figure}[H]
    \centering
    \subfigure[The three versions of CPAFT algorithm]{
        \begin{minipage}[c]{0.5\textwidth}
		  \centering
            \includegraphics[width=1.0\textwidth]{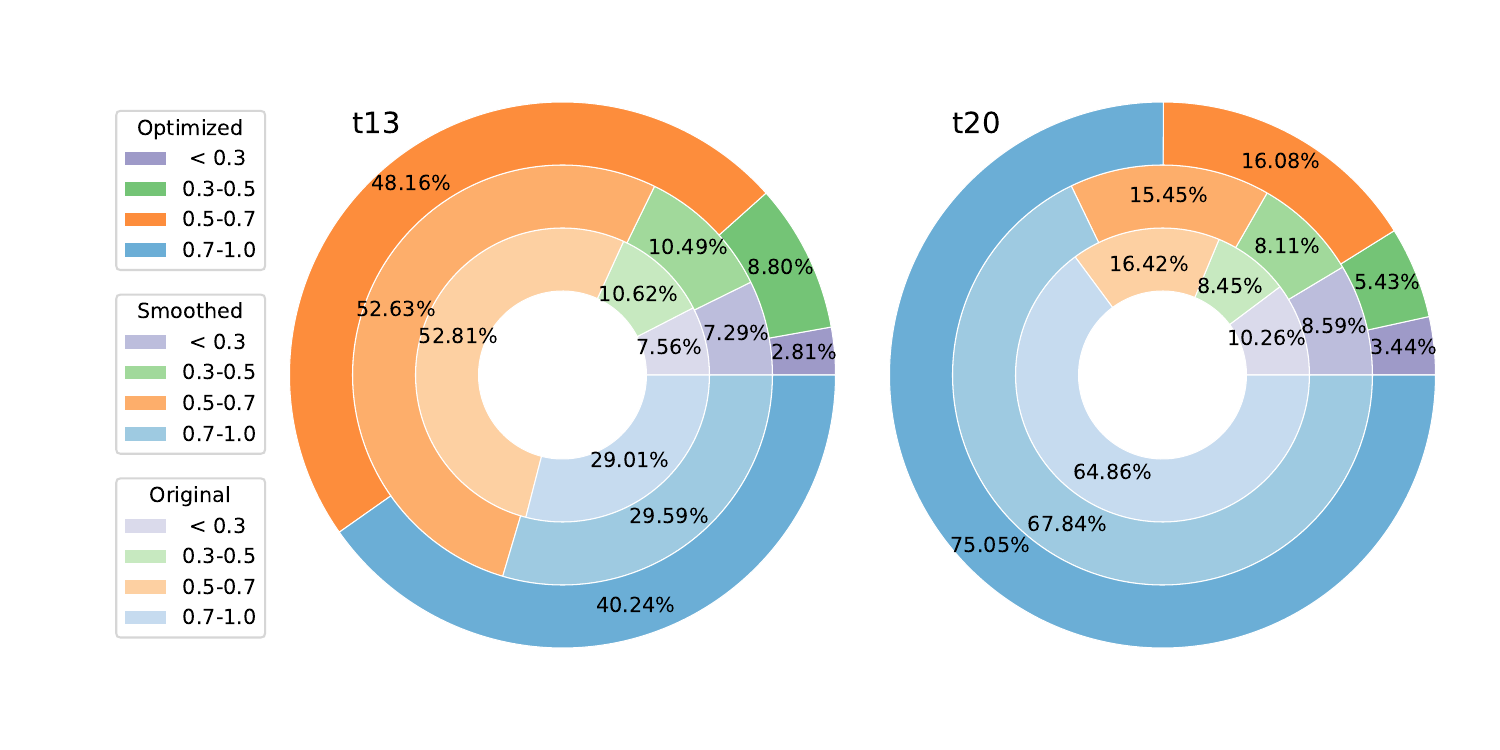}
        \end{minipage}
    }\subfigure[Reference software tools]{
		\begin{minipage}[c]{0.5\textwidth}
		  \centering
            \includegraphics[width=1.0\textwidth]{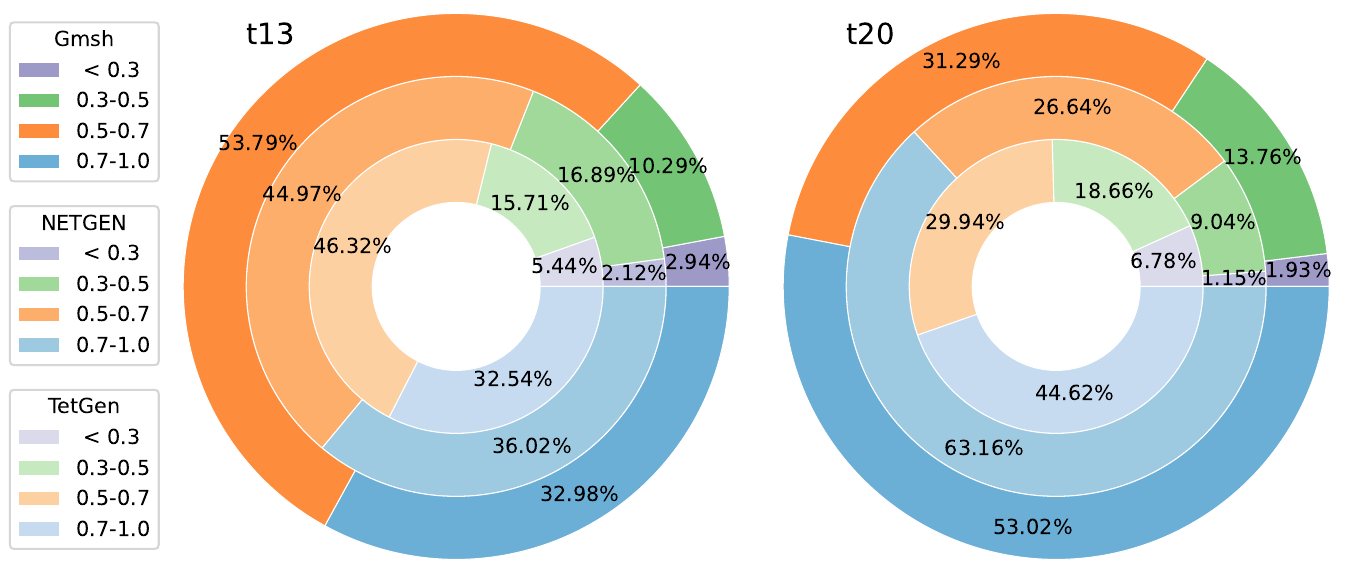}
        \end{minipage}
    }
    \caption{\color{red} Breakdown of mesh quality indicators $\alpha$ for demos t13 and t20: (a) the original CPAFT algorithm (denoted as `Original'), the CPAFT algorithm with Laplacian smoothing (denoted as `Smoothed'), and the CPAFT algorithm with both Laplacian smoothing and local remeshing (denoted as `Optimized'), respectively;  (b) TetGen, NETGEN, and Gmsh, respectively.}
    \label{fig: quality3D-op}
\end{figure}

\ 

}

{\color{blue}

We then consider two challenging models with more complex geometries: a space shuttle model and a real artery vessel model. For each model, we run two simulations using 1 and 8 processors, respectively. 
The geometries of the two models, along with the corresponding unstructured meshes generated by the CPAFT algorithm with Laplacian smoothing, are displayed in Figure~\ref{fig: space shuttle} and \ref{fig: artery}. These results demonstrate the parallel consistency of the CPAFT algorithm and the high quality of the generated meshes, even for these challenging models with relatively flat and narrow features. The mesh quality indicators $\alpha$ for the output meshes produced by the three versions of the CPAFT algorithm are presented in Figure~\ref{fig: quality-space-artery-op}.
Due to the complex geometry of these challenging models, meshes generated by the original CPAFT algorithm include more than $ 20\%$ elements with an indicator $\alpha<0.3$. While the mesh quality of the original CPAFT algorithm surpasses that of TetGen, it falls short when compared to Gmsh and NETGEN. Although the Laplacian smoothing approach improves mesh quality, the meshes generated by the CPAFT algorithm with Laplacian smoothing still contain more than than $ 15\%$ elements with an indicator $\alpha<0.3$. To further enhance mesh quality, we combine CPAFT with Laplacian smoothing and a parallel local remeshing approach, resulting in an optimized version of the CPAFT algorithm. The mesh quality indicators $\alpha$ for the optimized version are shown in Figure~\ref{fig: quality-space-artery-op}-(a), which indicates that the optimized version outperforms all baselines in terms of mesh quality.

\ 

\begin{figure}[H]
    \centering
    \subfigure[Geometry (.stl)]{
		\begin{minipage}[c]{0.31\textwidth}
		  \centering
            \begin{tikzpicture}
                \node[anchor=south west,inner sep=0] (image) at (0, 0) {\includegraphics[width=1.1\textwidth]{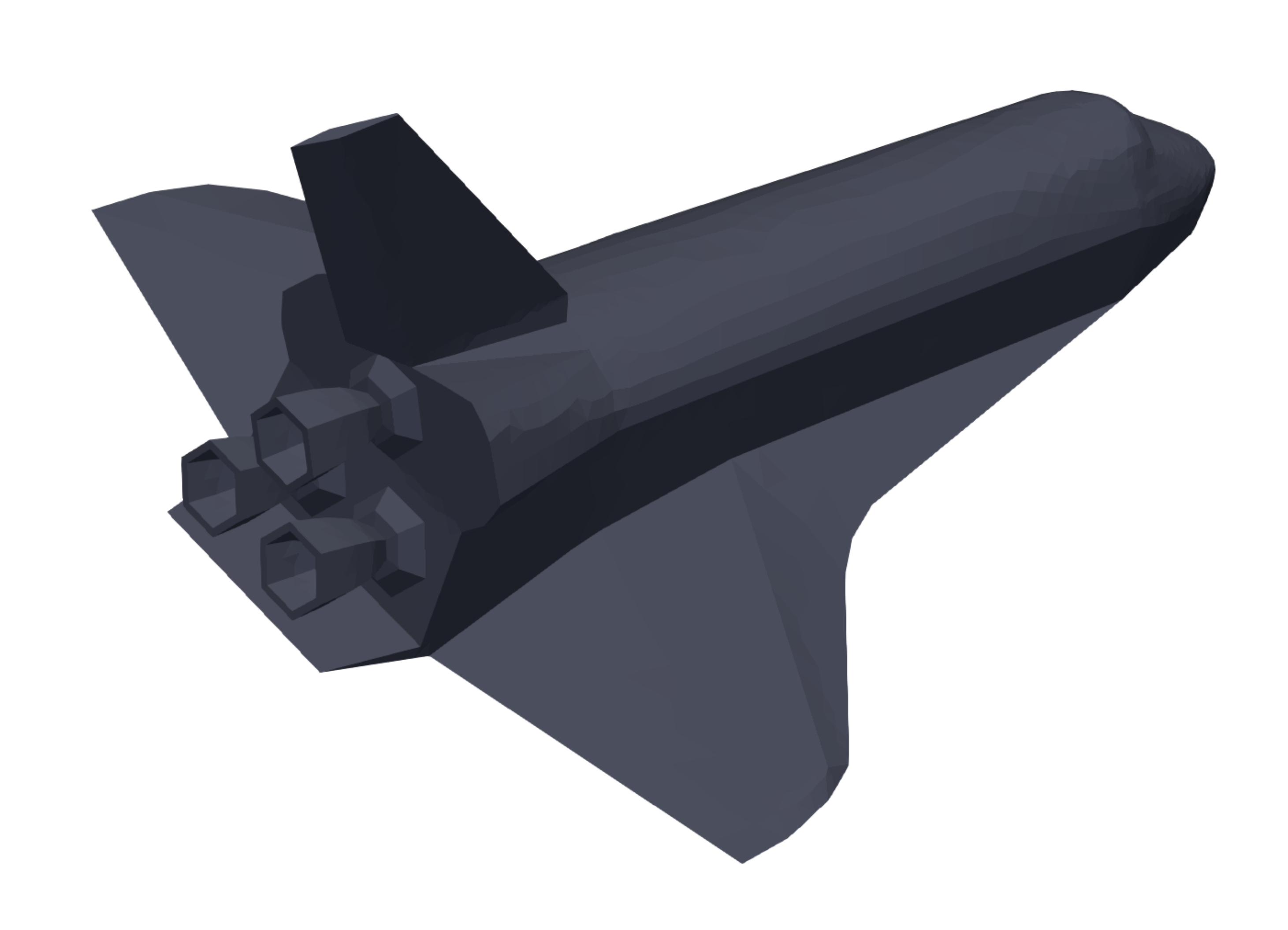}};
            \end{tikzpicture}
		\end{minipage}
    } \subfigure[$np = $ 1]{
		\begin{minipage}[c]{0.31\textwidth}
		  \centering
            \begin{tikzpicture}
                \node[anchor=south west,inner sep=0] (image) at (0, 0) {\includegraphics[width=1.1\textwidth]{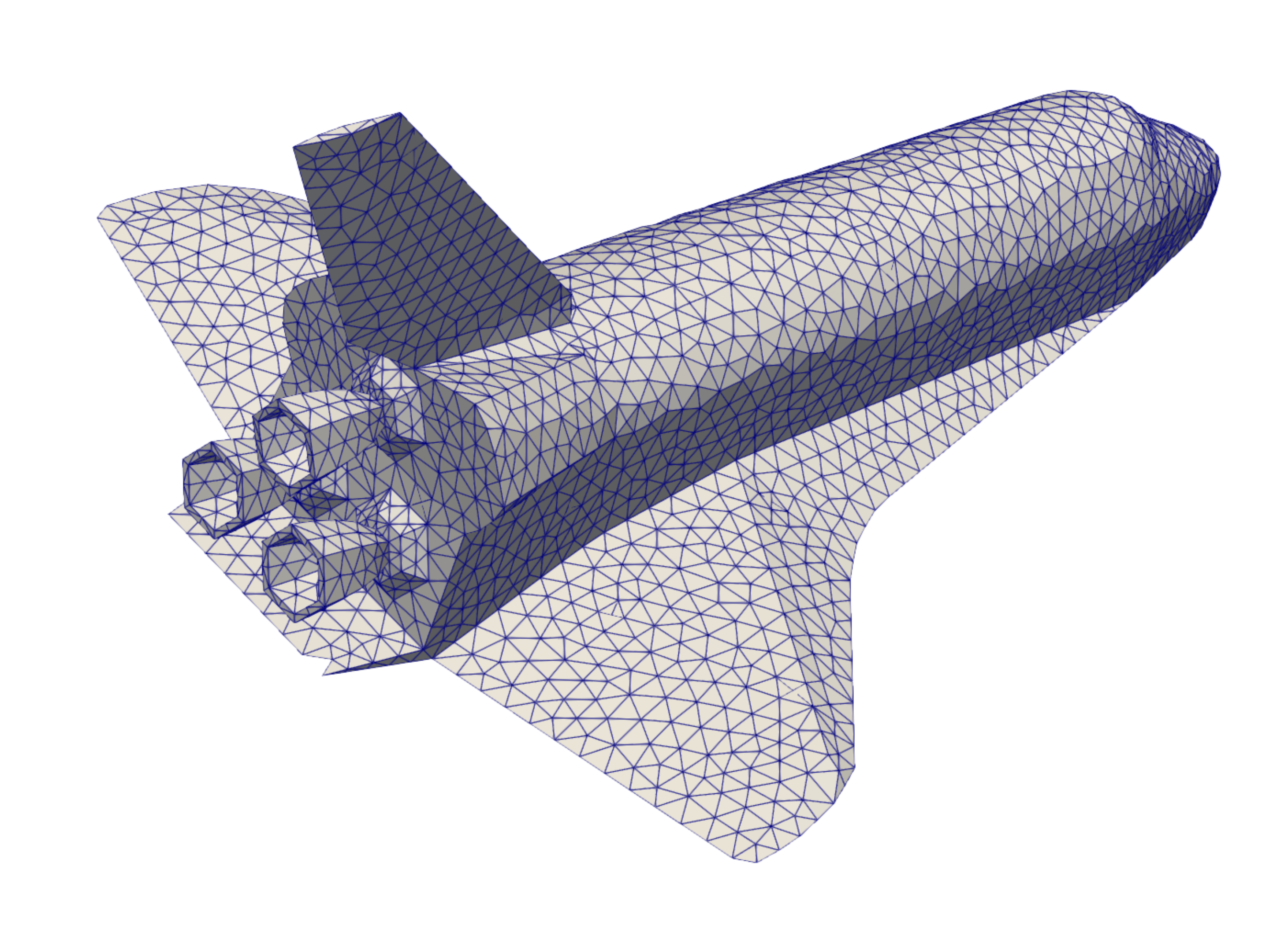}};
                \node[anchor=north east,inner sep=0] (image) at (2.0,1.5) {\includegraphics[width=0.42\textwidth]{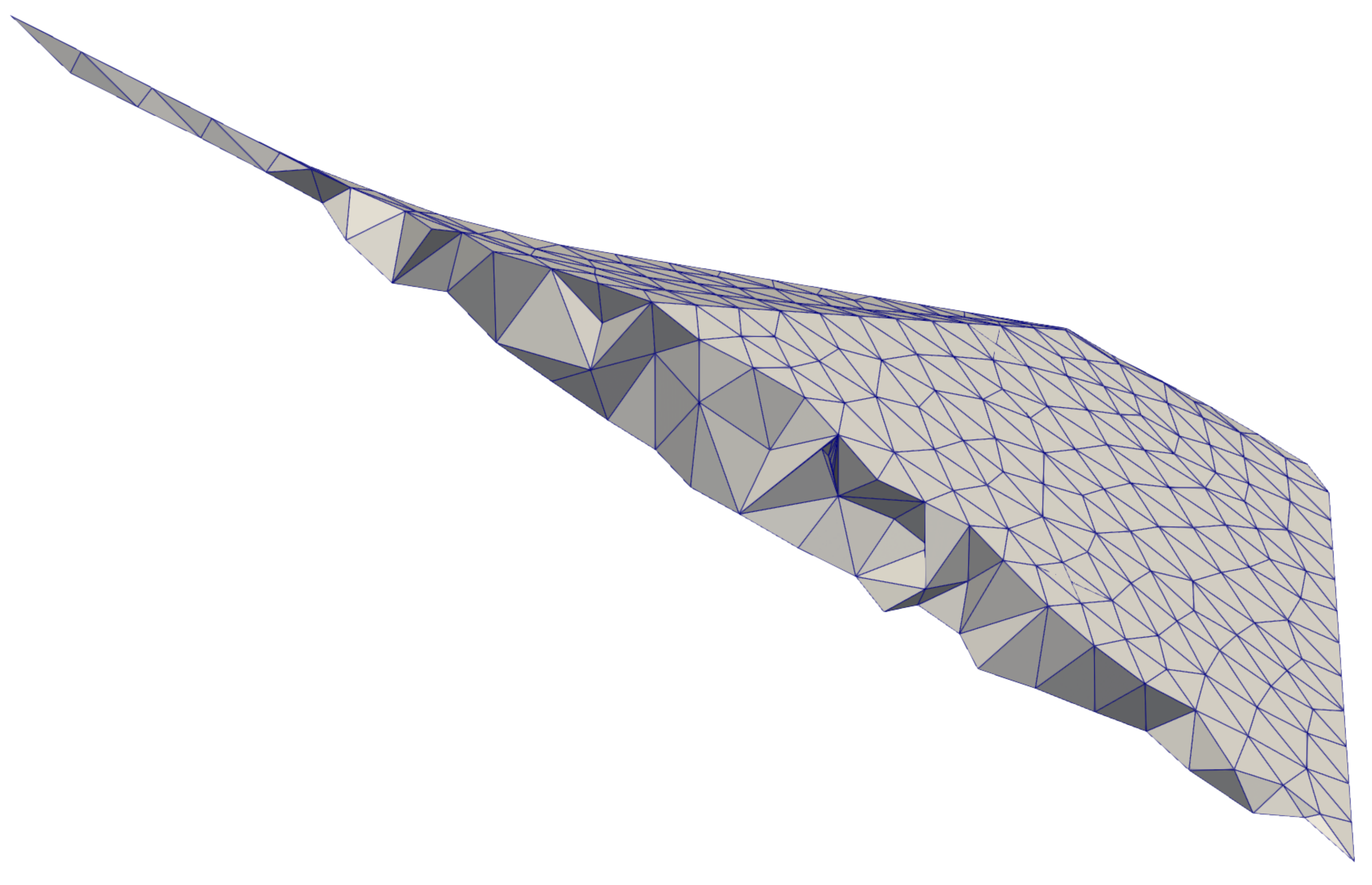}};
                \draw[dashed, black] (image.south west) rectangle (image.north east);
                \draw[dashed, black, line width=0.5pt] (2.0,1.0) -- (4.1, 2.0);
                \draw[thick, black, ->] (2.5,1.1) arc (35:160:0.5cm and 0.5cm);
            \end{tikzpicture}
		\end{minipage}
    } \subfigure[$np = $ 8]{
		\begin{minipage}[c]{0.31\textwidth}
		  \centering
            \begin{tikzpicture}
                \node[anchor=south west,inner sep=0] (image) at (0, 0) {\includegraphics[width=1.1\textwidth]{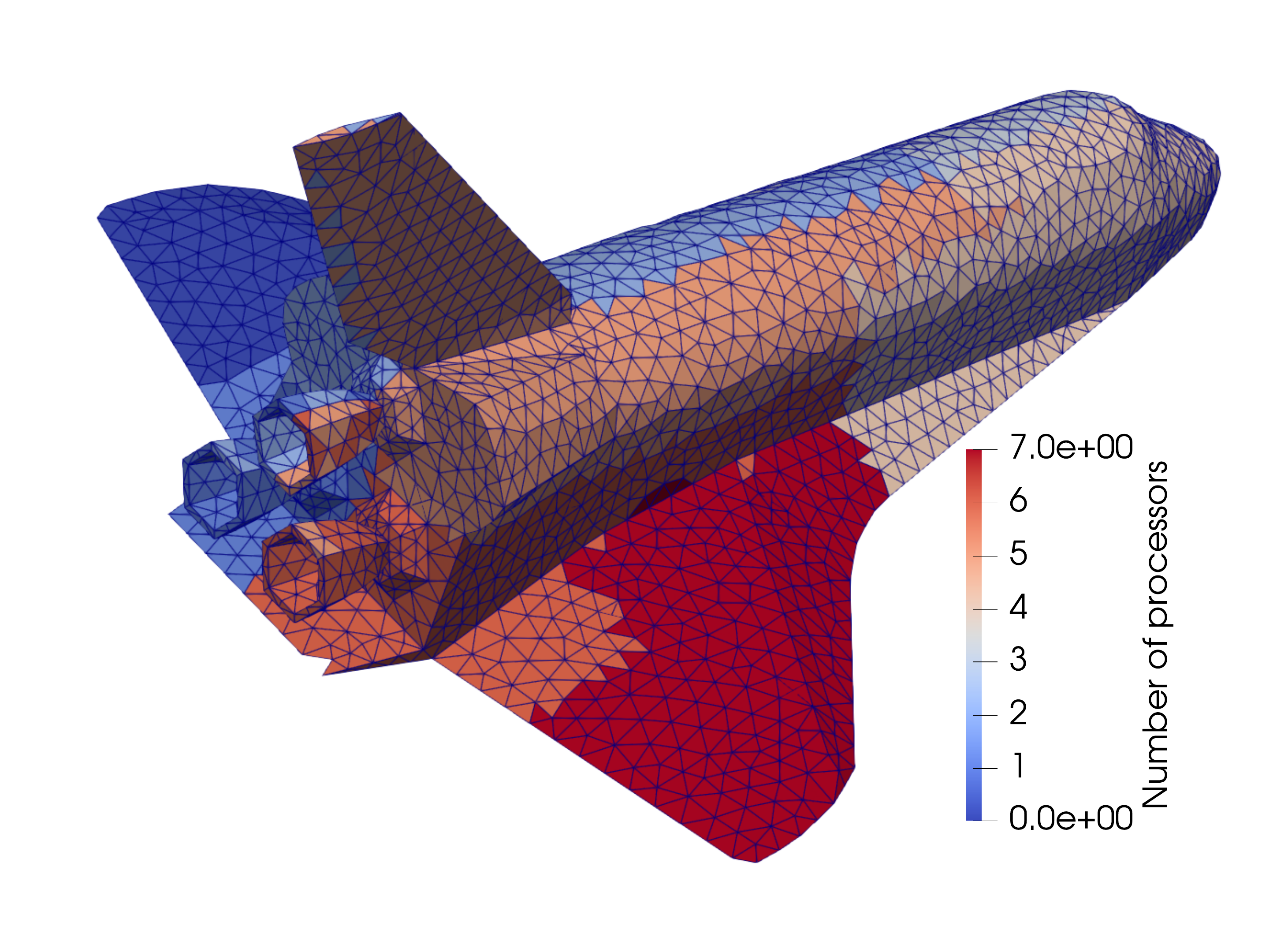}};
                \node[anchor=north east,inner sep=0] (image) at (2.0,1.5) {\includegraphics[width=0.42\textwidth]{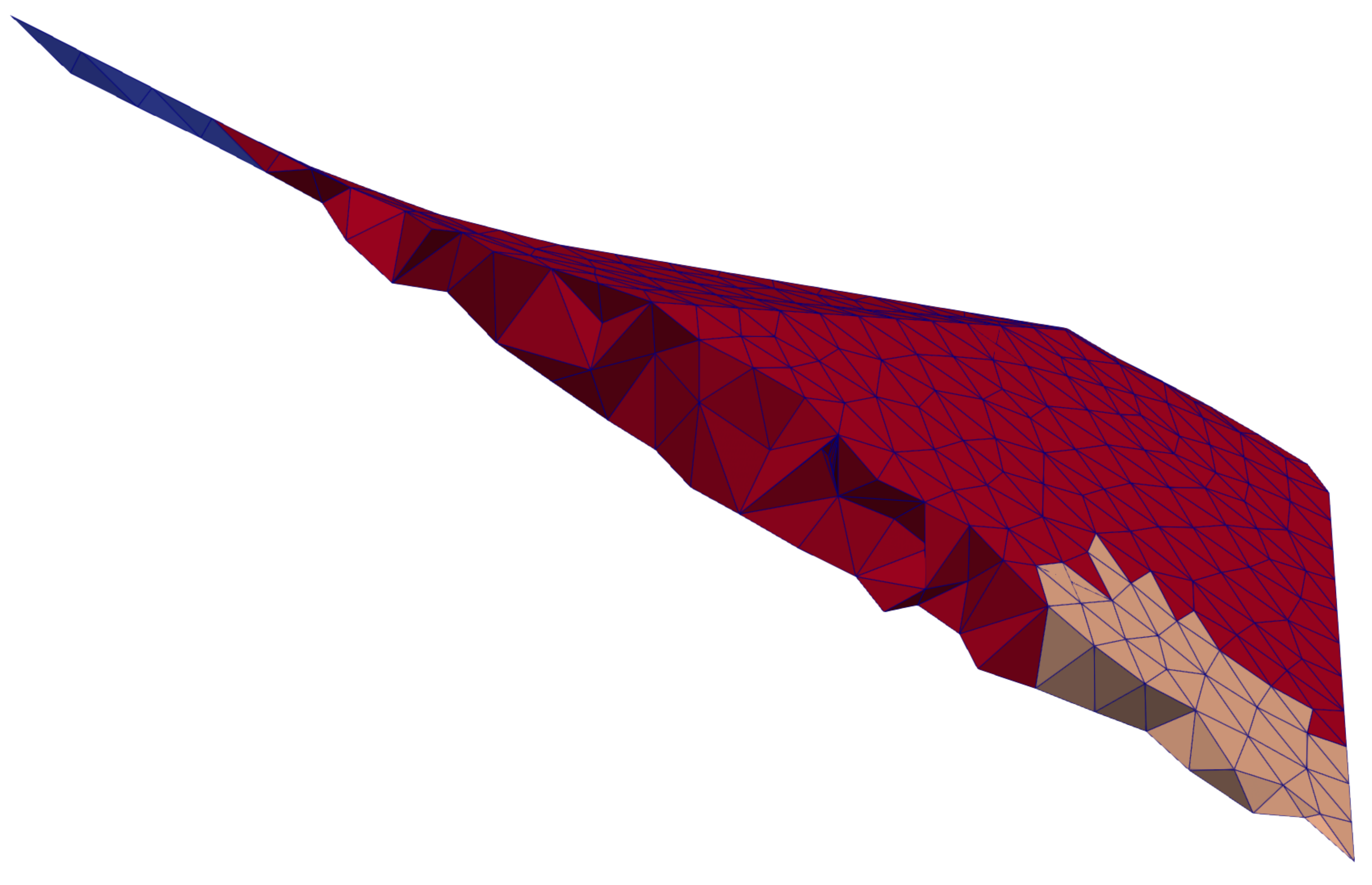}};
                \draw[dashed, black] (image.south west) rectangle (image.north east);
                \draw[dashed, black, line width=0.5pt] (2.0,1.0) -- (4.1, 2.0);
                \draw[thick, black, ->] (2.5,1.1) arc (35:160:0.5cm and 0.5cm);
            \end{tikzpicture}
		\end{minipage}
    }
    \caption{\color{blue} The space shuttle model: (a) geometry, (b) mesh generated by the CPAFT with Laplacian smoothing using (b) $np=1$, and (c) $np=8$. Both simulations lead to a same mesh with 19,973 elements.}
    \label{fig: space shuttle}
\end{figure}

\ 

\ 

\begin{figure}[H]
    \centering
		\subfigure[Geometry (.stl)]{
		\begin{minipage}[c]{0.31\textwidth}
		  \centering
		  \begin{tikzpicture}
                \node[anchor=south west,inner sep=0] (image) at (0, 0) {\includegraphics[height=1.35\textwidth]{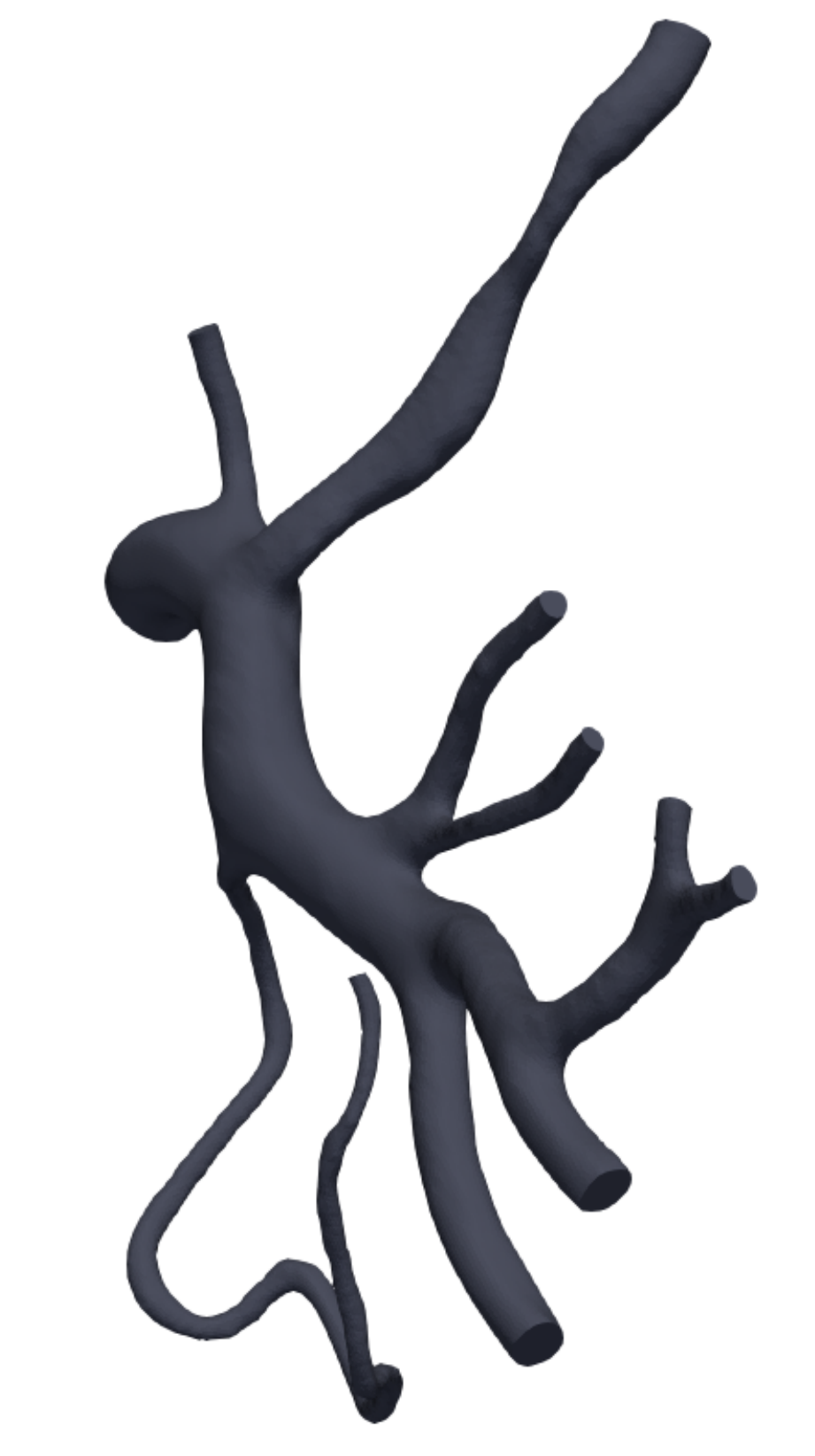}};
            \end{tikzpicture}
        \end{minipage}
		} \subfigure[$np = $ 1]{
		\begin{minipage}[c]{0.31\textwidth}
		  \centering
            \begin{tikzpicture}
                \node[anchor=south west,inner sep=0] (image) at (0, 0) {\includegraphics[height=1.35\textwidth]{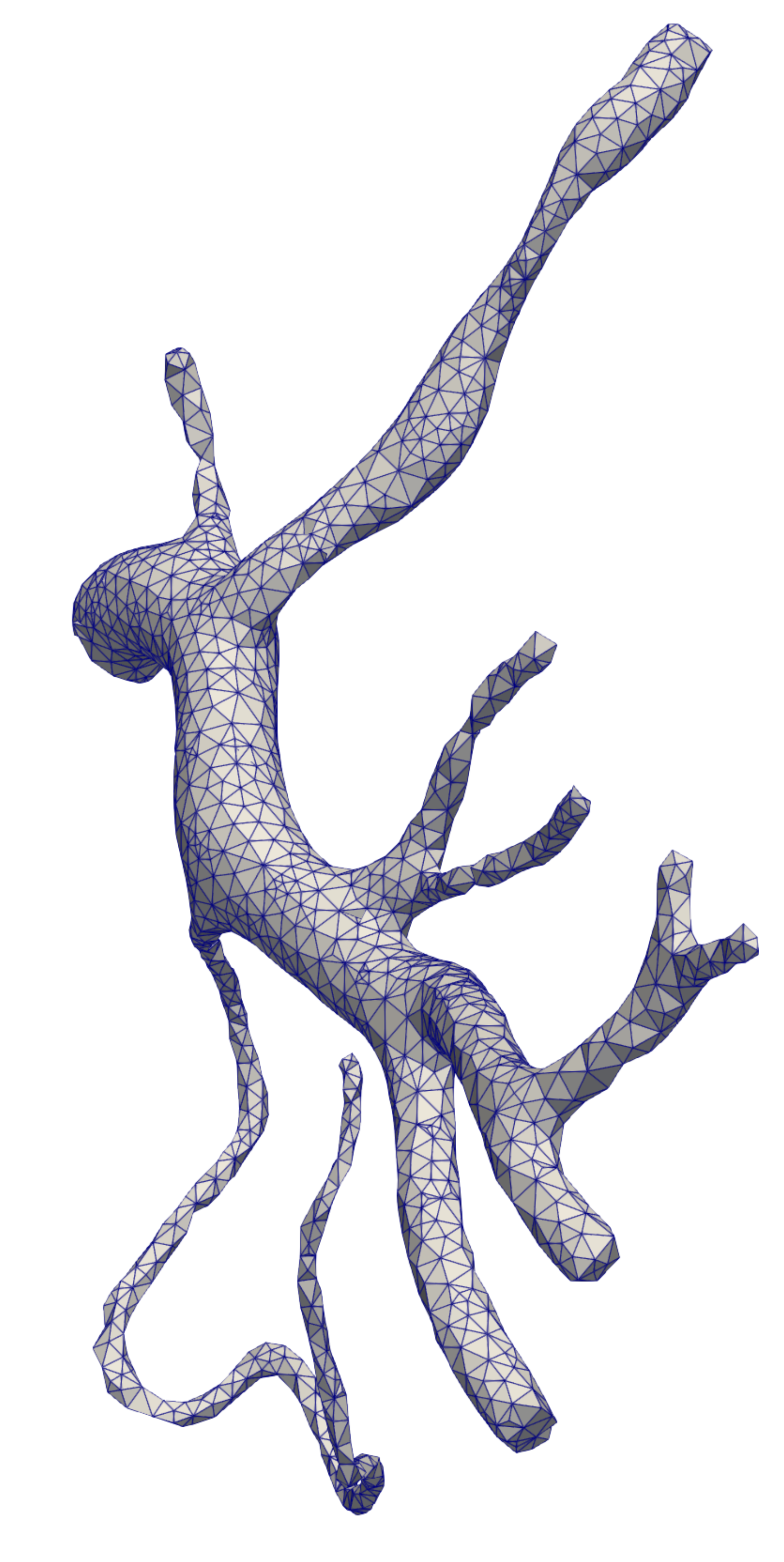}};
                \node[anchor=north east,inner sep=0] (image) at (4.5,6.9) {\includegraphics[height=0.7\textwidth]{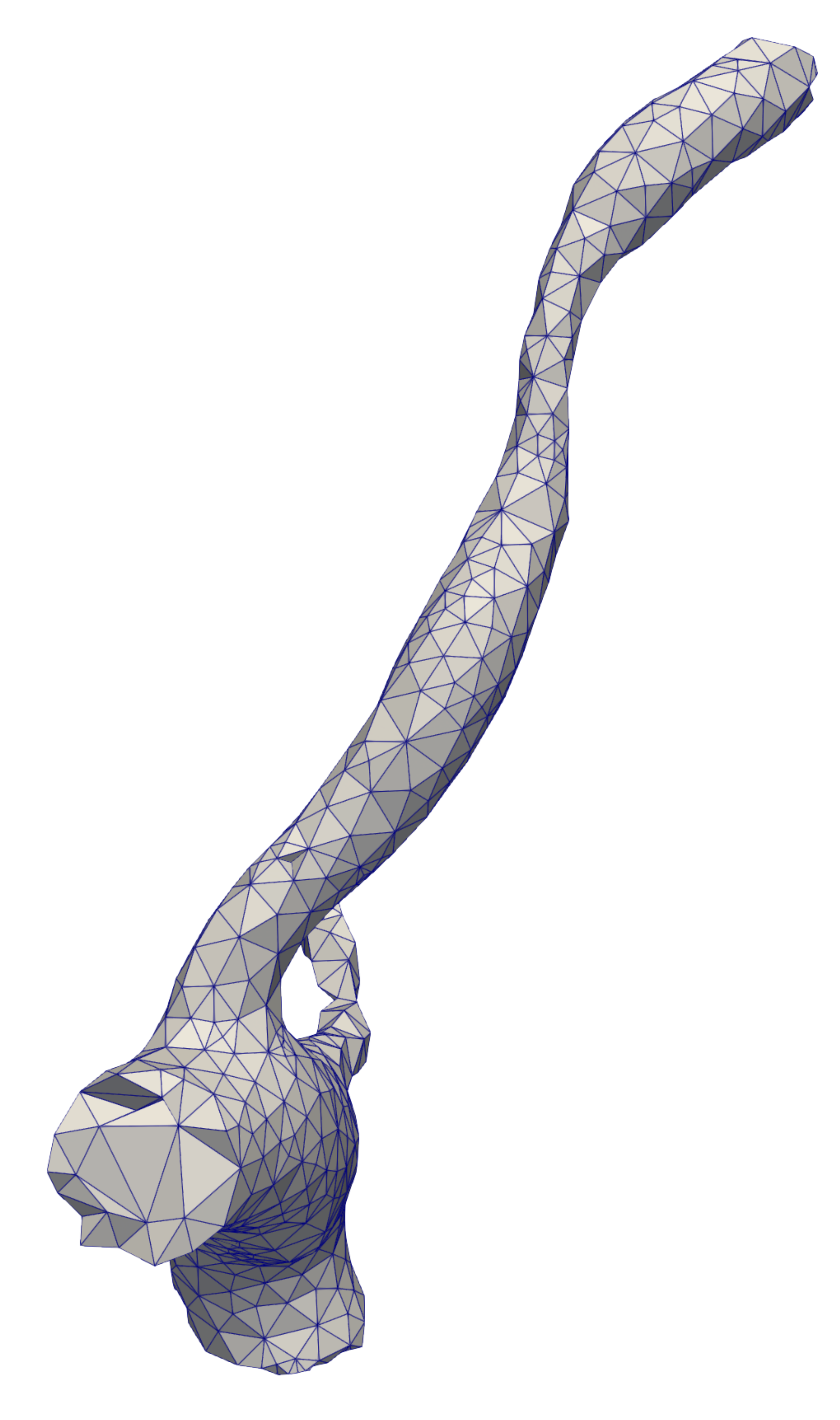}};
                \draw[dashed, black] (image.south west) rectangle (image.north east);
                \draw[dashed, black, line width=0.5pt] (0.35, 3.2+0.2)-- (0.4, 3.5+0.2) -- (1.6, 3.8+0.2) -- (1.55, 3.5+0.2);
                \draw[dashed, black, line width=0.5pt] (0.35, 3.2+0.2)--(0.75, 3.3+0.2);
                \draw[dashed, black, line width=0.5pt] (1.35, 3.45+0.2)--(1.55, 3.5+0.2);
                \draw[thick, black, ->] (1.7,4.05) arc (180:50:0.56cm and 0.3cm);
            \end{tikzpicture}
		\end{minipage}
		} \subfigure[$np = $ 8]{
		\begin{minipage}[c]{0.31\textwidth}
		  \centering
            \begin{tikzpicture}
                \node[anchor=south west,inner sep=0] (image) at (0, 0) {\includegraphics[height=1.35\textwidth]{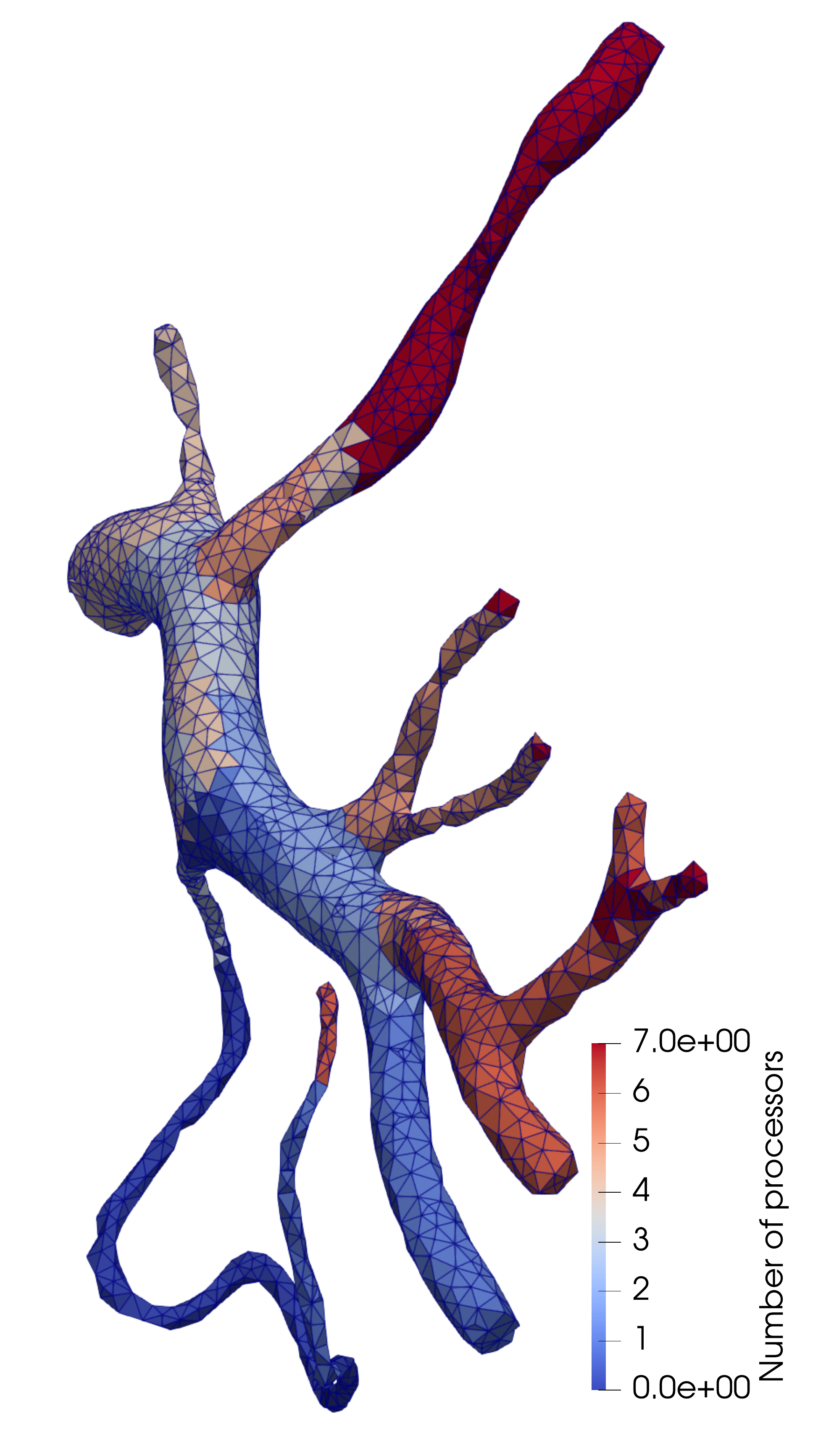}};
                \node[anchor=north east,inner sep=0] (image) at (4.5,6.9) {\includegraphics[height=0.7\textwidth]{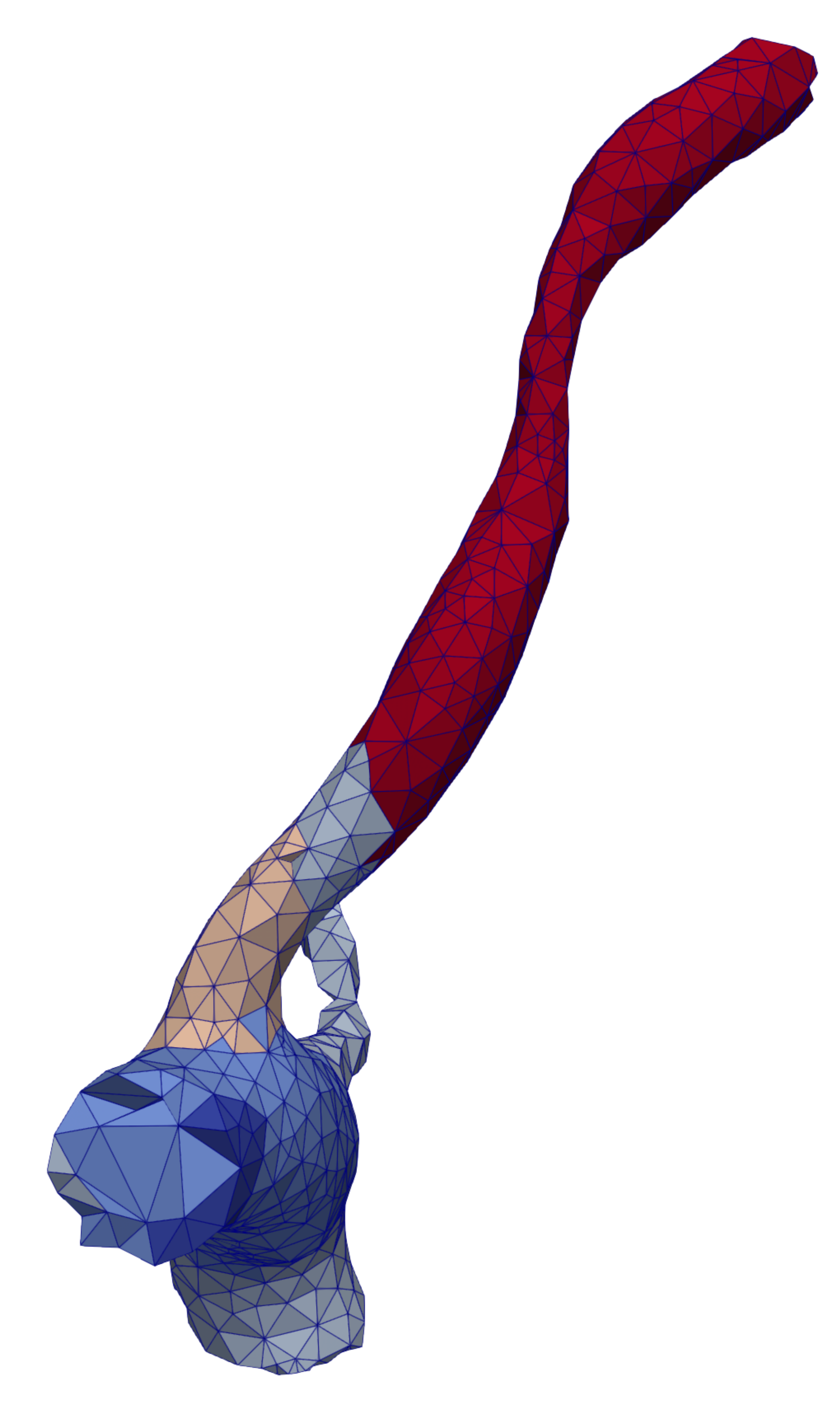}};
                \draw[dashed, black] (image.south west) rectangle (image.north east);
                \draw[dashed, black, line width=0.5pt] (0.35, 3.2+0.2)-- (0.4, 3.5+0.2) -- (1.6, 3.8+0.2) -- (1.55, 3.5+0.2);
                \draw[dashed, black, line width=0.5pt] (0.35, 3.2+0.2)--(0.75, 3.3+0.2);
                \draw[dashed, black, line width=0.5pt] (1.35, 3.45+0.2)--(1.55, 3.5+0.2);
                \draw[thick, black, ->] (1.7,4.05) arc (180:50:0.56cm and 0.3cm);
            \end{tikzpicture}
		\end{minipage}
		}
    \caption{\color{blue} The real artery vessel model: (a) geometry, (b) mesh generated by the CPAFT with Laplacian smoothing using (b) $np=1$, and (c) $np=8$. Both simulations lead to a same mesh with 9,846 elements.
}
    \label{fig: artery}
\end{figure}

\ 

\begin{figure}[H]
    \centering
    \subfigure[The three versions of CPAFT algorithm]{
        \begin{minipage}[c]{0.5\textwidth}
		  \centering
            \includegraphics[width=1.0\textwidth]{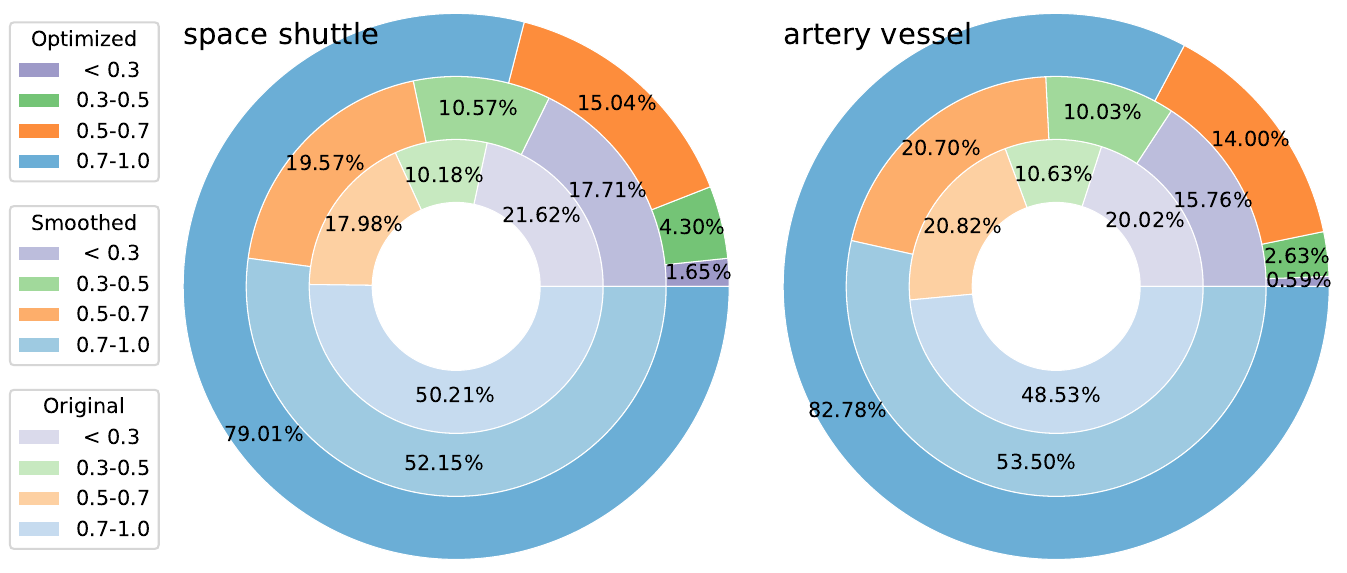}
        \end{minipage}
    }\subfigure[Reference software tools]{
		\begin{minipage}[c]{0.5\textwidth}
		  \centering
            \includegraphics[width=1.0\textwidth]{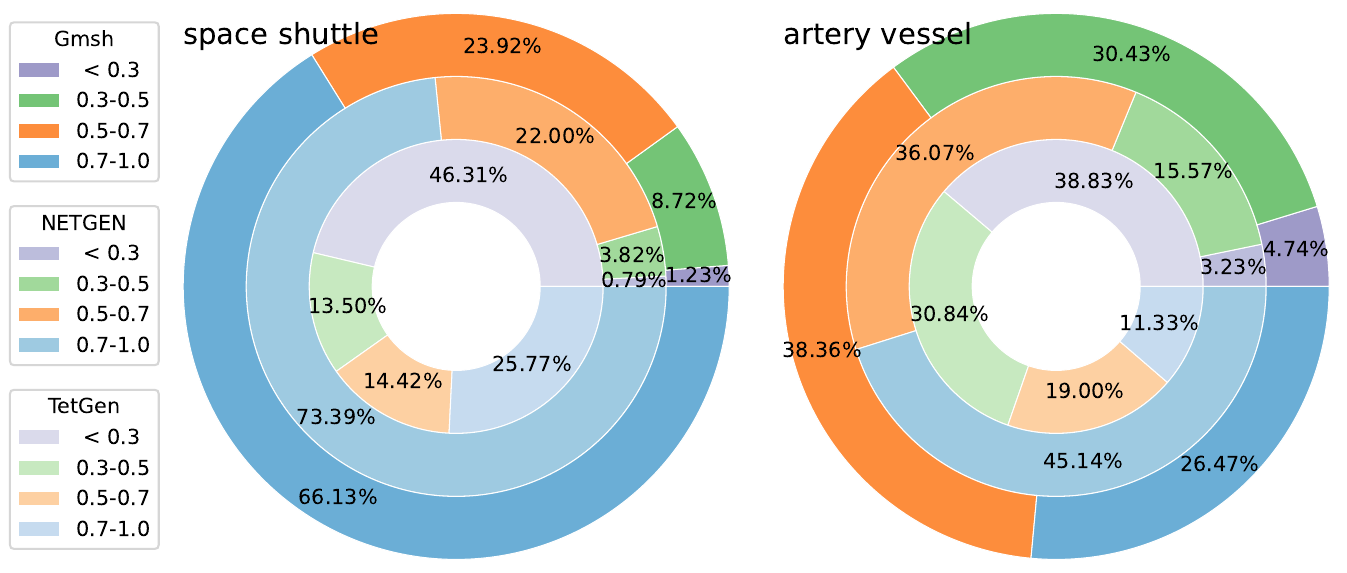}
        \end{minipage}
    }
    \caption{\color{blue} Breakdown of mesh quality indicators $\alpha$ for the space shuttle and real artery vessel model: (a) the original CPAFT algorithm (denoted as `Original'), the CPAFT algorithm with Laplacian smoothing (denoted as `Smoothed'), and the CPAFT algorithm with both Laplacian smoothing and local remeshing (denoted as `Optimized'), respectively;  (b) TetGen, NETGEN, and Gmsh, respectively.}
    \label{fig: quality-space-artery-op}
\end{figure}
}

\ 

\subsection{Parallel scalability}
\label{subsection: scalability}
{\color{red}
To study the parallel efficiency of the CPAFT algorithm, we use the artery vessel model and the space shuttle model as the benchmark test cases. In the first test case, the surface of the artery vessel model initially contains 9,982 triangles. Table~\ref{tab: efficiency comparison} presents the computing time, generation rate, and parallel efficiency for the CPAFT algorithm with both Laplacian smoothing and local remeshing, as well as for Gmsh and NETGEN. The CPAFT algorithm achieves a parallel efficiency of $47.50\%$ (i.e. $15.2$x speedup), while the reference software tools demonstrate less than $10\%$ parallel efficiency with 32 processors, highlighting the superior parallel efficiency of the CPAFT algorithm. Furthermore, the CPAFT algorithm exhibits a significant advantage in term of generation rate at 32 processors.

\begin{table}[H]
	\centering
	\caption{\color{red} The computing time, generation rate, and parallel efficiency for the CPAFT algorithm with both Laplacian smoothing and local remeshing, as well as for Gmsh and NETGEN. ``Time\_S (s)" and ``Time\_R (s)" represent the computing time for Laplacian smoothing and local remeshing in CPAFT algorithm, respectively. Here ``Eff ($\%$)$=T_1/(nT_n)$" denotes the parallel efficiency using $n$ processors. ``Time (s)" and ``Rate (k/s)" represent the total computing time in seconds and the elements generated per second, respectively. }
        \setlength{\tabcolsep}{4.5mm}{
        \center
        \begin{tabular} {|c|c|c|c|c|c|c|c|}
		  \hline
            &$np$ &$1$ &$2$ &$4$ &$8$ &$16$ &$32$ \\\cline{1-8}
            \multirow{5}{*}{CPAFT%(24.114k, 22.626k)
            } %&Time &23.926 &13.174 &7.601 &4.450 &2.765 &2.050 \\\cline{2-8}
            &Time\_S &0.0839 &0.0495 &0.0363 &0.0224 &0.0135 &0.0071 \\\cline{2-8}
            &Time\_R &1.2892 &0.7690 &0.5777 &0.2281 &0.1307 &0.1250 \\\cline{2-8}
            &Time &28.013 &14.697 &8.198 &4.764 &2.849 &1.843 \\\cline{2-8}
            %&Rate &1.008 &1.830 &3.172 &5.419 &8.721 &11.763  \\\cline{2-8}
            &Rate &0.808 &1.539 &2.760 &4.749 &7.942 &12.277  \\\cline{2-8}
            %&Eff($\%$) &$100\%$ &$90.81\%$ &$78.69\%$ &$67.21\%$ &$54.08\%$ &$36.47\%$ \\\cline{1-8}
            &Eff($\%$) &$100\%$ &$95.30\%$ &$85.43\%$ &$73.50\%$ &$61.45\%$ &$47.50\%$ \\\cline{1-8}
            \multirow{3}{*}{Gmsh%(21.099k)
            } &Time &2.445 &2.203 &1.999 &2.030 &2.158 &2.660 \\\cline{2-8}
            &Rate &8.629 &9.577 &10.555 &10.394 &9.777 &7.932 \\\cline{2-8}
            &Eff($\%$) &$100\%$ &$55.49\%$ &$30.58\%$ &$15.06\%$ &$7.08\%$ &$2.87\%$ \\\cline{1-8}
            \multirow{3}{*}{NETGEN%(21.241k)
            } &Time &7.563 &7.433 &6.127 &5.439 &5.087 &5.084  \\\cline{2-8}
            &Rate &2.809 &2.858 &3.467 &3.905 &4.176 &4.178  \\\cline{2-8}
            &Eff($\%$) &$100\%$ &$50.87\%$ &$30.86\%$ &$17.38\%$ &$9.29\%$ &$4.65\%$\\\cline{1-8}
            %\multirow{3}{*}{TetGen} &Time & & & & & & \\\cline{2-8}
            %&Rate & & & & & & \\\cline{2-8}
            %&Eff($\%$) &$100\%$ & & & & & \\\cline{1-8}
		\end{tabular}
        }
	\label{tab: efficiency comparison}
\end{table}
}

{\color{blue}
%In the remainder of this subsection, we focus on studying the large-scale parallel scalability of the proposed CPAFT algorithm, in both strong and weak senses. 

%The simulations are carried out on a supercomputer with multi-nodes, each of which is comprised of 2 AMD7H12@2.6GHz 64-core CPUs with 512GB local memory, and all nodes are interconnected via the Infiniband high performance network. 
%In the experiments, we assign one subdomain to each processor (i.e., processor core), and employ upto 2,048 processors for the parallel scalability tests.

We then take the space shuttle model as the second benchmark test case and employ up to 2,048 processors to evaluate the parallel scalability. The CPAFT algorithm, with the default settings $(\beta_1, \beta_2)=(0.40, 0.15)$, is used to generate tetrahedral mesh. Five initial front sets, denoted as ${\cal F}_{\mathcal{L}_{i}}$ with $i=1, \cdots, 5$, are constructed. The front set ${\cal F}_{\mathcal{L}_{1}}$ corresponds to the surface mesh of the space shuttle model used in subsection~\ref{meshquality}. By dividing each triangular element of this surface mesh into four smaller triangles, we obtain a refined surface mesh, denoted as ${\cal F}_{\mathcal{L}_{2}}$. The other front sets are constructed in a similar manner. The total numbers of initial fronts in ${\cal F}_{\mathcal{L}_{1}}$, $\cdots$, ${\cal F}_{\mathcal{L}_{5}}$ are 0.222, 0.888, 3.551, 14.205, and 56.820 million, respectively. 
\begin{figure}[H]
    \centering
    \includegraphics[width=0.85\textwidth]{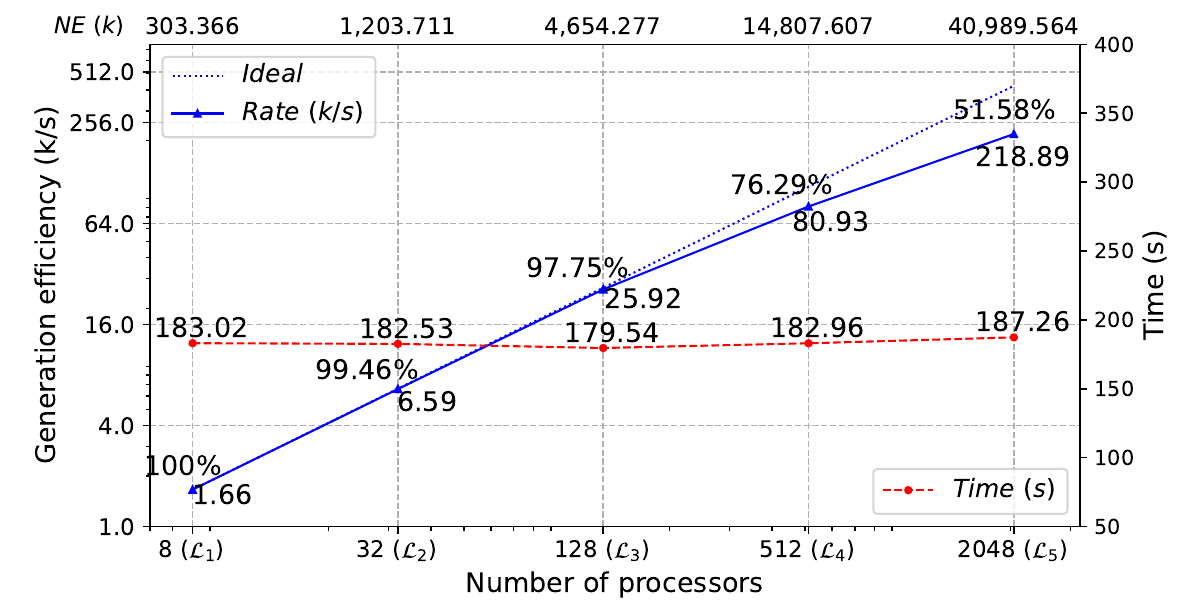}
    \caption{\color{blue} Weak scaling results with $np=$ 8, 32, 128, 512, and 2,048 processors, respectively. Here ``$NE (k)$" in the top of the figure denotes the total number of elements generated in 10 iterations. ``Time (s)" and ``Rate (k/s)" represent the total computing time in seconds and the elements generated per second, respectively.}
    \label{fig: fig-weak-scaling}
\end{figure}

To investigate the weak scalability, we apply the CPAFT algorithm to generate tetrahedral meshes using the initial front sets ${\cal F}_{\mathcal{L}_{1}}$, $\cdots$, and ${\cal F}_{\mathcal{L}_{5}}$. The corresponding numbers of processors used are 8, 32, 128, 512, and 2,048, respectively.  To match each initial front set, the level of SFC varies from 4 to 8.
In all simulations, the number of elements per processor is around 37k. The total computing time increases from 183.02 to 187.26, as the number of processors changes from 8 to 2,048, which shows the good weak scaling performance of the CPAFT algorithm. The mesh generation rate, as shown in Figure~\ref{fig: fig-weak-scaling}, indicates that the CPAFT algorithm achieves a generation efficiency of $51.58\%$ as the number of processors increases from 8 to 2,048.

We then use the initial front set ${\cal F}_{\mathcal{L}_{4}}$ to test the strong scalability of the newly proposed CPAFT algorithm. The numbers of processors used are 64, 128, 256, 512, and 1,024 processors. The numerical results, presented in Figure~\ref{fig: fig-strong-scaling}, demonstrate that the CPAFT algorithm obtains stable and robust performance with an appropriately large level of SFC. In particular, the algorithm scales efficiently from 64 to 1,024 processors, attaining a parallel efficiency of $60.33\%$ when the level of SFC is set to $7$. The parallel efficiency further increases to $64.39\%$ when the level of SFC is raised to $8$.

\begin{figure}[H]
    \centering
    \includegraphics[width=0.85\textwidth]{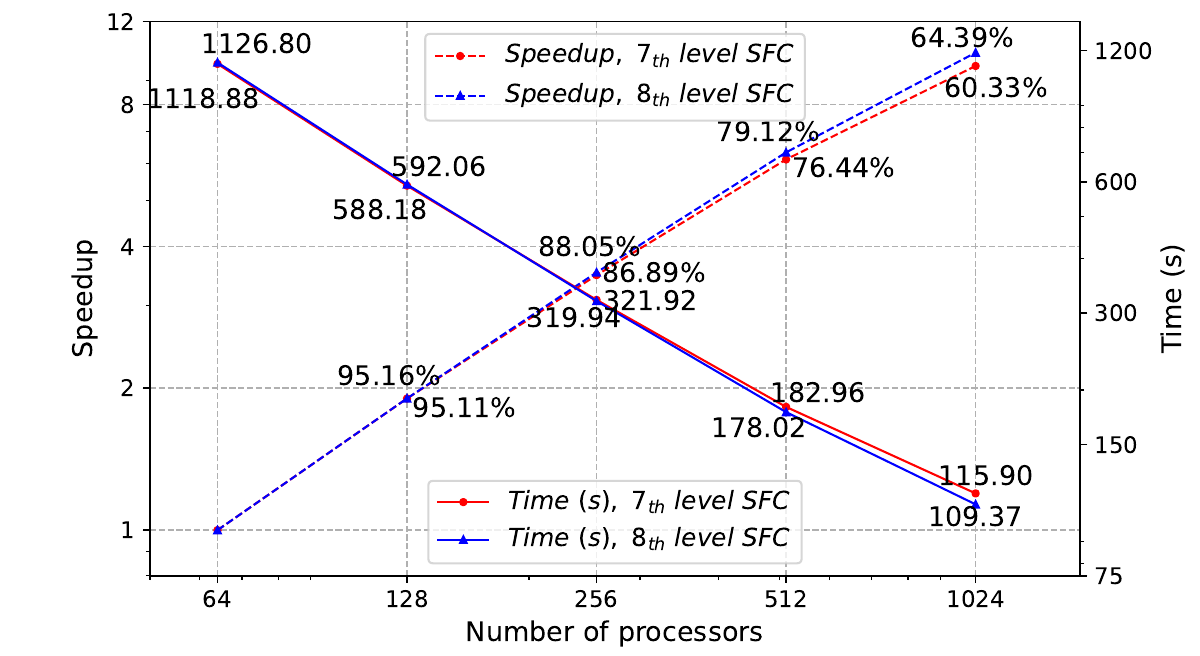}
    \caption{ \color{blue} Computing time and speedup for the CPAFT with the initial front set $\mathcal{F}_{\mathcal{L}_4}$ are evaluated using $np = $ 64, 128, 256, 512, and 1,024 processors. Two simulations are performed with $7_{th}$ and $8_{th}$ level of SFC. ``Time (s)" represents the computing time in seconds, while ``Speedup" indicates the strong scaling speedup achieved. %The total number of elements generated in this test (10 iterations) is independent of the number of processors, which is 14.808m for the case with $7_{th}$ level SFC and 15.265m for the case with $8_{th}$ level SFC.
    }
    \label{fig: fig-strong-scaling}
\end{figure}

}

\section{Conclusions}
\label{sec: conclusions}
The CPAFT algorithm was designed to efficiently generate large-scale unstructured triangular/tetrahedral meshes on multi-core CPU supercomputers, addressing the mesh intersection problem with parallel consistency and constructing the same unstructured mesh as the sequential AFT. To distribute the task of mesh generation to multiple processors, a non-overlapping domain decomposition was constructed based on the SFC Cartesian background meshes, resulting in a geometrically invariant global index for each front. By extending each subdomain with $\delta$ layers of background, an overlapping domain decomposition was generated to facilitate communication. To prevent newly generated elements from intersecting with existing ones, a distributed forest-of-overlapping-octrees (or quadtrees) was employed. Mutual mesh intersection judgment between newly generated elements was handled by a new consistent parallel MIS algorithm. Theoretical analysis substantiated the termination of the CPAFT algorithm after a finite number of iterations and verified its consistency with the sequential version mathematically. Several numerical simulations demonstrated that the CPAFT algorithm exhibits excellent performance in concave and non-uniform scenarios, enabling the generation of high-quality meshes and scaling effectively up to two thousand processors. 
In the future, we plan to further improve the performance of CPAFT and extend it on  heterogeneous architectures, such as multi-GPU systems. {\color{blue} Additionally, we will consider integrating rational coordinates into the algorithm to enhance its robustness, as mentioned in refs. \citep{richard1997adaptive, shewchuk1996triangle}. }

\section*{Acknowledgments}
This work was supported in part by the National Natural Science Foundation of China
(No. 12131002) and the Changsha Science and Technology Bureau (No. KH2301001).

%% The Appendices part is started with the command \appendix;
%% appendix sections are then done as normal sections

%\appendix
%\section{Sample Appendix Section}
%\label{sec:sample:appendix}
%Lorem ipsum dolor sit amet, consectetur adipiscing elit, sed do eiusmod tempor section \ref{sec:sample1} incididunt ut labore et dolore magna aliqua. Ut enim ad minim veniam, quis nostrud exercitation ullamco laboris nisi ut aliquip ex ea commodo consequat. Duis aute irure dolor in reprehenderit in voluptate velit esse cillum dolore eu fugiat nulla pariatur. Excepteur sint occaecat cupidatat non proident, sunt in culpa qui officia deserunt mollit anim id est laborum.

%% If you have bibdatabase file and want bibtex to generate the
%% bibitems, please use
%%
\bibliographystyle{elsarticle-num-names} 
\bibliography{elsarticle-template}

%% else use the following coding to input the bibitems directly in the
%% TeX file.

% \begin{thebibliography}{00}

% %% \bibitem[Author(year)]{label}
% %% Text of bibliographic item

% \bibitem[ ()]{}

% \end{thebibliography}
\end{document}